\documentclass[]{sn-jnl}

\usepackage{graphicx}%
\usepackage{multirow}%
\usepackage{amsmath,amssymb,amsfonts}%
\usepackage{amsthm}%
\usepackage{mathrsfs}%
\usepackage[title]{appendix}%
\usepackage{xcolor}%
\usepackage{textcomp}%
\usepackage{manyfoot}%
\usepackage{booktabs}%
\usepackage{algorithm}%
\usepackage{algorithmicx}%
\usepackage{algpseudocode}%
\usepackage{listings,enumerate,cite}%
\usepackage{tikz}
\usepackage[left,pagewise,mathlines]{lineno}
\usepackage{amsmath}

\newtheoremstyle{compact}
{3pt}                         
{3pt}                         
{\normalfont\normalsize}       
{}
{\bfseries}
{.}
{0.5em}
{}

\theoremstyle{compact}

\newtheorem{theorem}{Theorem}
\newtheorem{proposition}{Proposition}
\newtheorem{lemma}{Lemma}
\newtheorem{corollary}{Corollary}
\newtheorem{remark}[theorem]{Remark}
\newtheorem{example}[theorem]{Example}
\usepackage{etoolbox}
\AtBeginEnvironment{proof}{%
	\normalsize
	\setlength{\topsep}{3pt}%
}

\usepackage{titlesec}
\titleformat{\section}
{\normalfont\normalsize\bfseries}
{\thesection}{1em}{}

\theoremstyle{thmstylethree}%
\newcommand{\vertiii}[1]{\left\vert\kern-0.25ex\left\vert\kern-0.25ex\left\vert #1\right\vert\kern-0.25ex\right\vert\kern-0.25ex\right\vert}
\newcommand {\dom} {{\rm dom}\,}

\newcommand{\al}{\alpha}
\newcommand{\be}{\beta}

\newcommand{\bx}{\bar x}

\newcommand {\R} {\mathbb R}
\newcommand {\N} {\mathbb N}

\newcommand {\sd} {\partial}

\newcommand {\B} {\mathbb B}

\usepackage[left,pagewise]{lineno}
\usepackage{etoolbox}

\newcommand*\patchMathEnvironmentForLineno[1]{%
	\BeforeBeginEnvironment{#1}{\begin{linenomath}}%
		\AfterEndEnvironment{#1}{\end{linenomath}}%
}

\patchMathEnvironmentForLineno{align}
\patchMathEnvironmentForLineno{align*}
\patchMathEnvironmentForLineno{gather}
\patchMathEnvironmentForLineno{gather*}
\patchMathEnvironmentForLineno{equation}
\patchMathEnvironmentForLineno{equation*}
\patchMathEnvironmentForLineno{multline}
\patchMathEnvironmentForLineno{multline*}

\newcommand*\fixAMSLineNumbers[1]{%
	\AtBeginEnvironment{#1}{\postdisplaypenalty=0\relax}%
}

\fixAMSLineNumbers{gather}
\fixAMSLineNumbers{gather*}
\fixAMSLineNumbers{align}
\fixAMSLineNumbers{align*}

\usepackage{mathptmx}   
\usepackage[colorinlistoftodos]{todonotes}
\hypersetup{
	colorlinks=true,
	linkcolor=blue, 
	citecolor=red, 
	urlcolor=blue  } 
\usetikzlibrary{patterns,arrows.meta}

\begin{document}

\title[Optimality Conditions and Solution Set Characterisations of Convex Minimal Time Optimisation Problems]{Optimality Conditions and Solution Set Characterisations of Convex Minimal Time Optimisation Problems}

\author[1,2]{\fnm{Doan} \sur{Huu Hieu}}\email{dhhieu.math@gmail.com}

\author*[3]{\fnm{Nguyen} \sur{Duy Cuong}}\email{ndcuong@ctu.edu.vn}

\affil[1]{\orgdiv{Faculty of Mathematics and Computer Science}, \orgname{University of Science}, \city{Ho Chi Minh City},  \country{Vietnam}}

\affil[2]{\orgdiv{Vietnam National University}, \city{Ho Chi Minh City},  \country{Vietnam}}

\affil[3]{\orgdiv{Faculty of Mathematics}, \orgname{College of Natural Sciences, Can Tho University}, \city{Can Tho City},  \country{Vietnam}}

\abstract{
The paper studies convex minimisation problems involving a composition of a coordinatewise nondecreasing convex function with  a finite family of nonnegative, locally Lipschitz continuous and convex functions.
We establish complete dual optimality conditions and characterise the entire solution set using multipliers and subgradients associated with a given solution. 
The results are applied to norm optimisation problems involving minimal time functions with different convex dynamics sets and convex target sets.
The sum, maximum and $p$-norms yield minimal time versions of the  Fermat--Torricelli, Chebyshev centre and $p$-Fermat--Torricelli problems, respectively.
Examples in finite and infinite dimensional spaces are provided to illustrate the solution set characterisations.
}

\keywords{
Fermat-Torricelli problem, Chebyshev centre problem, minimal time function, optimality conditions, convex analysis, functional analysis}

\maketitle



\section{Introduction}

Given a finite family of nonempty closed convex sets in a normed space, the generalised Fermat–Torricelli problem seeks a point that minimises the sum of distances to these sets, while the generalised Chebyshev centre problem minimises the maximum distance.
When the sets are singletons, they reduce to the classical Fermat–Torricelli and Chebyshev centre problems.
These problems play an important role in facility location theory and related areas and have been extensively studied from both theoretical and computational viewpoints \cite{BolMarSol99,HieCuo26,NamAnRecSun14,NamNguSal12,BotTur94,NamHoa13,Cuo26b,MorNam11+}.

Distance functions measure proximity to target sets by taking the infimum of the distances from a given point to points in each set and use the same norm for all targets.
In practice, however, the cost of reaching a target may depend on the direction of movement or on the target itself.
Minimal time functions provide a natural extension that accommodates these features.
Given a target set $\Omega$ and a dynamics set $F$, the minimal time function assigns to a point $x$ the infimum of all $t\ge0$ for which $x+tF$ intersects $\Omega$ \cite{MorNam11,ColWol04a}.
It measures the least time needed to reach the target under the movement constraints described by $F$ and reduces to the usual distance function when $F$ is the closed unit ball.

Minimal time functions can serve as cost functions in location problems by measuring the time needed to reach each target. Allowing different dynamics sets for different targets permits these costs to vary among targets, while non-symmetric dynamics sets  allow costs to depend on the direction of movement. These features extend location models based on distances.

Optimisation problems involving minimal time functions have been studied in several settings. In \cite{MorNam11+}, the authors studied the Fermat--Torricelli problem with a common bounded dynamics set and derived optimality conditions in convex and nonconvex settings. In \cite{NamVilAn12}, the authors considered the Chebyshev centre problem with a common, possibly unbounded dynamics set and proposed a projected subgradient algorithm in finite-dimensional spaces. Directional minimal time functions and their applications to directional and vectorial Fermat--Torricelli problems were also studied in \cite{DurPanStr16,NamZal13,DurPanStr17}.
Minimal time functions play an important role in optimal control, where they describe the least time needed to reach a target along admissible trajectories \cite{Bar89,WolZhu98}. Other research directions include necessary and sufficient conditions for error bounds of minimal time functions \cite{Long21,Long22} and well-posedness and regularity properties of minimal time problems in finite and infinite dimensional spaces \cite{ColGonMor10}.

Generalised differentiation of minimal time functions in convex and nonconvex settings provides a foundation for these studies. 
A main focus is to establish formulas and calculus rules for Fréchet and limiting subdifferentials.
In particular, subdifferentials of minimal time functions are described in terms of normal cones to the target sets, polar sets of the dynamics sets and subdifferentials of the associated Minkowski functionals. 
The reader is referred to \cite{MorNam11,NamVilAn12,NamZal13,HeNg06,ColWol04a,ColWol04b,ColGonMor10,ZhaHeJia14,JiaHe09} and the references therein for further discussions and results.

To the best of our knowledge, a general framework for dual optimality conditions and solution set representations covering both minimal time Fermat--Torricelli and Chebyshev centre problems has not been established. 
The present paper continues our work on location problems involving distances to points and convex sets \cite{Cuo26b,HieCuo26} by developing such a unified framework for convex minimisation problems involving minimal time functions. 
The convexity allows us to establish necessary and sufficient optimality conditions and characterise the entire solution set using dual data associated with a given solution. In the nonconvex setting, subdifferential conditions generally provide only necessary conditions for local optimality and do not guarantee global optimality. The nonconvex setting lies beyond the scope of this paper.

We first study a general convex composition minimisation problem in which 
a finite family of nonnegative, locally Lipschitz continuous and convex functions
is combined through a convex function which is coordinatewise nondecreasing on the nonnegative orthant of $\R^n$.
Employing a subdifferential chain rule studied in \cite{Cla90} and recently refined in \cite{HieCuo26}, we establish complete dual necessary and sufficient optimality conditions.
Given a solution and dual data arising from the optimality conditions, the entire solution set is characterised via a fixed outer multiplier and  component subgradients.

Characterising the entire solution set is an important topic in optimisation, particularly in location problems, where restrictions may prevent the use of a particular optimal point and a description of all solutions allows one to choose another candidate without increasing the objective value.
Such a description also provides a foundation for studying geometric properties of the solution set and its behaviour under perturbations.
Solution set characterisations for differentiable convex programmes were studied by Mangasarian \cite{Man88} and subsequently extended by Burke and Ferris \cite{Bur91} to nonsmooth convex programmes.
In particular, \cite[Lemma 2(a) \& Theorem 5]{Bur91} shows that the set of subgradients satisfying the optimality condition is independent of the chosen solution.
Our result in this paper complements these characterisations by describing the solution set in terms of the individual components and the outer function of the composition.

Building on this general framework, we study
norm optimisation problems involving minimal time functions associated with nonempty closed convex target sets and bounded closed convex dynamics sets containing the origin in their interiors.
Different norms reflect different priorities in balancing the costs of reaching the targets.
The norms used to combine the minimal time functions are generated by a class of continuous convex functions defined on the standard simplex in $\mathbb R^n$ \cite{SaiKatTak00,Cuo26a}.
This construction provides explicit representations of the dual norms in terms of the primal generating functions and includes the sum, maximum and $p$-norms. These choices yield minimal time versions of the Fermat--Torricelli, Chebyshev centre and $p$-Fermat--Torricelli problems, respectively.
Comparisons with existing results are discussed whenever applicable.
Examples in finite and infinite dimensional spaces equipped with different norms are provided to illustrate the established results.

The rest of the paper is organised as follows. Section \ref{S2} collects notation and preliminary results used throughout the paper. 
Section \ref{S3} establishes dual optimality conditions and solution set descriptions for a general convex composition minimisation problem.
Sufficient conditions for existence and boundedness of the solution set are also discussed.
Section \ref{S4} applies these results to norm optimisation problems involving minimal time functions.
Sections \ref{S5}--\ref{S7} study the sum, maximum and $p$-norm formulations, respectively.
The final Section~\ref{S8} summarises the contribution of the paper and outlines potential directions for future research.
 
\section{Preliminaries}\label{S2}
Our basic notation is standard, see, e.g., \cite{Mor06.1,RocWet98,Roc70}.
Throughout the paper, if not explicitly stated otherwise, $X$ is a vector space equipped with a norm $\|\cdot\|$.
The topological dual of a normed space $X$ is denoted by $X^*$, while $\langle\cdot,\cdot\rangle$ and $\|\cdot\|^*$ denote the bilinear pairing between the two spaces and the dual norm, respectively.
The zero vector and closed unit ball in $X$ are denoted by $0_X$ and $\B_X$, respectively.
When $X$ is a Hilbert space, its norm is given by $\|x\|:=\sqrt{\langle x,x\rangle}$ for all $x\in X$, and we identify $X$ with $X^*$ via the Riesz representation theorem \cite[Theorem~5.5]{Bre11}.
The linear subspace generated by a set $\Omega\subset X$ is denoted by $\operatorname{span}\Omega$. 
Symbols $\mathbb{N}$, $\mathbb{R}$, $\mathbb{R}_+$, and $\mathbb{R}_+^n$ denote the set of all positive integers, real numbers,  nonnegative real numbers, and the set of all vectors in $\mathbb{R}^n$ with nonnegative components, respectively.
We write $\infty$ instead of $+\infty$ and employ the convention that $\frac{1}{\infty}=0$ and $\frac{1}{0}=\infty$.
The distance from a point $\bx\in X$ to $\Omega$ is defined by $d(\bx,\Omega):=\inf_{x\in\Omega}\|\bx-x\|$.
If $\Omega=\emptyset$, we set $d(\bx,\Omega):=\infty$.
The positive part of a number $\al\in\R$ is defined by $\al^+:=\max\{\al,0\}$.
For a vector $u:=(u_1,\ldots,u_n)\in\R^n$, we denote $u^+:=(u_1^+,\ldots,u_n^+)$.

For a convex function $f:X\to\R\cup\{\infty\}$,
its subdifferential at $\bx\in\dom f:=\{x \in X\mid f(x) <\infty\}$ is given by  
\begin{gather*}
\sd f(\bx)=\left\{x^* \in X^*\mid \langle x^*,x-\bx\rangle\le f(x)-f(\bx)\;\;\text{for all}\;x\in X\right\}.
\end{gather*}
It is convex and weak$^*$ closed.
We set $\partial{f}(\bx):=\emptyset$ if $\bx\notin\dom f$.
It is well known \cite[Example~3.36]{MorNam22} that
\begin{equation}\label{sn}
\partial\|\cdot\|(\bx) = 
\begin{cases}
\left\{x^*\in X^*\mid \|x^*\|^* =1\;\text{and}\;\langle x^*,\bx\rangle=\|\bx\|\right\} & \text{if}\;\bx \ne 0_X,\\
\B_{X^*}
 & \text{otherwise}.
\end{cases}
\end{equation}	

Given a nonempty convex subset $\Omega$ of $X$, the normal cone to $\Omega$ at $\bx\in\Omega$ is defined by
\begin{gather}\label{CN}
N_\Omega(\bx):=\{x^*\in X^*\mid\langle x^*,x-\bx\rangle\le 0\;\;\text{for all}\;x\in\Omega\}.
\end{gather}
If $\bx\notin\Omega$, we set $N_\Omega(\bx):=\emptyset$.

The following statement gives a chain rule for the subdifferential of a convex composition; see
\cite[Theorem~2.3.9(i) \& Proposition~2.36(b)]{Cla90} and \cite{HieCuo26}.

\begin{lemma}\label{L1.2}
Let $\bar x\in X$, $f_i:X\to\R$ $(i=1,\ldots,n)$ be locally Lipschitz continuous at $\bx$, $f:=(f_1,\ldots,f_n)$, and
$\Phi:\R^n\to\R$.
Suppose that $f_1,\ldots,f_n,\Phi$ and $\Phi\circ f$  are convex, and $\al_1,\ldots,\al_n\ge 0$ for any $(\al_1,\ldots,\al_n)\in\partial \Phi(f(\bx))$.
Then
\begin{gather*}
\partial(\Phi\circ f)(\bx)= \left\{\sum_{i=1}^{n}\al_ix^*_i\Bigm|(\al_1,\ldots,\al_n)\in\partial\Phi(f(\bx)),\ x_i^*\in\partial f_i(\bx)\; (i=1,\ldots,n)\right\}.
\end{gather*}	
\end{lemma}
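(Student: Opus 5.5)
We prove the two inclusions separately; the inclusion ``$\supset$'' is elementary and the inclusion ``$\subset$'' carries the content.

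\emph{Inclusion ``$\supset$''.} Let $\al:=(\al_1,\ldots,\al_n)\in\sd\Phi(f(\bx))$ and $x_i^*\in\sd f_i(\bx)$, and set $x^*:=\sum_{i=1}^n\al_ix_i^*$. For any $x\in X$, the subgradient inequality for $\Phi$ gives
\begin{gather*}
\Phi(f(x))-\Phi(f(\bx))\ge\sum_{i=1}^n\al_i\bigl(f_i(x)-f_i(\bx)\bigr).
\end{gather*}
Since $\al_i\ge0$ by hypothesis, we may multiply the inequality $f_i(x)-f_i(\bx)\ge\langle x_i^*,x-\bx\rangle$ by $\al_i$ and sum over $i$. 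This yields $\Phi(f(x))-\Phi(f(\bx))\ge\langle x^*,x-\bx\rangle$, that is, $x^*\in\sd(\Phi\circ f)(\bx)$.

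\emph{Inclusion ``$\subset$''.} We use directional derivatives together with a separation argument. Because $\Phi$ is a finite convex function on $\R^n$, it is locally Lipschitz. Hence $\Phi\circ f$ is locally Lipschitz at $\bx$, and $\sd(\Phi\circ f)(\bx)$ is the set of functionals dominated by the directional derivative $(\Phi\circ f)'(\bx;\cdot)$. Each $f_i$ is convex and Lipschitz near $\bx$, so $f_i'(\bx;\cdot)$ is finite, sublinear and continuous. Moreover, the difference quotients $(f(\bx+td)-f(\bx))/t$ converge to $v(d):=(f_1'(\bx;d),\ldots,f_n'(\bx;d))$. Using the Lipschitz continuity of $\Phi$, we obtain
\begin{gather*}
(\Phi\circ f)'(\bx;d)=\Phi'(f(\bx);v(d))=\max_{\al\in\sd\Phi(f(\bx))}\sum_{i=1}^n\al_if_i'(\bx;d)=\max_{\al\in\sd\Phi(f(\bx))}\ \sup_{x_i^*\in\sd f_i(\bx)}\Bigl\langle\sum_{i=1}^n\al_ix_i^*,d\Bigr\rangle .
\end{gather*}
The last step again uses $\al_i\ge0$, together with the max formula $f_i'(\bx;d)=\max_{x_i^*\in\sd f_i(\bx)}\langle x_i^*,d\rangle$. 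Consequently, $(\Phi\circ f)'(\bx;\cdot)$ is the support function of the set $S$ on the right-hand side of the statement.

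It then suffices to show that $S$ is convex and weak$^*$ closed. Once this holds, the Hahn--Banach separation theorem shows that every $x^*$ dominated by the support function of $S$ lies in $S$.

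\emph{Main obstacle: convexity and weak$^*$ closedness of $S$.} For weak$^*$ closedness, note that $\sd\Phi(f(\bx))$ is compact in $\R^n$ and each $\sd f_i(\bx)$ is weak$^*$ compact, since it is bounded by the local Lipschitz constant (Banach--Alaoglu). The map $(\al,x_1^*,\ldots,x_n^*)\mapsto\sum_{i=1}^n\al_ix_i^*$ is continuous from the product, with the weak$^*$ topology on the $X^*$ factors, into $X^*$ with the weak$^*$ topology, because the $\al_i$ range over a bounded set. Hence $S$ is weak$^*$ compact, and in particular weak$^*$ closed.

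Convexity of $S$ is less immediate, because the map is bilinear. Take two points $\sum_{i=1}^n\al_ix_i^*$ and $\sum_{i=1}^n\be_iy_i^*$ of $S$ and $\lambda\in[0,1]$. The convex combination can be written as $\sum_{i=1}^n\gamma_iz_i^*$, where $\gamma:=\lambda\al+(1-\lambda)\be$ and
\begin{gather*}
z_i^*:=\frac{\lambda\al_ix_i^*+(1-\lambda)\be_iy_i^*}{\gamma_i}\in\sd f_i(\bx)\quad\text{whenever }\gamma_i>0,
\end{gather*}
with $z_i^*$ chosen arbitrarily in $\sd f_i(\bx)$ when $\gamma_i=0$. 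Nonnegativity of the multipliers is what makes $z_i^*$ a convex combination of $x_i^*$ and $y_i^*$. The case $\gamma_i=0$ is harmless for the same reason: nonnegativity forces $\lambda\al_i=(1-\lambda)\be_i=0$, so the $i$th term vanishes in both representations.

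Alternatively, the whole inclusion ``$\subset$'' may be quoted directly from \cite[Theorem~2.3.9(i)]{Cla90}, since for convex locally Lipschitz functions the Clarke subdifferential coincides with the convex subdifferential.
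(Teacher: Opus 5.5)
Your proof is correct. It takes a self-contained route, whereas the paper gives no argument of its own: it quotes Clarke's Chain Rule II \cite[Theorem~2.3.9(i)]{Cla90}, uses that convex locally Lipschitz functions are regular and that their Clarke generalised gradient equals the convex subdifferential, and adds the refinement in \cite{HieCuo26}. You instead stay inside convex analysis. You identify $(\Phi\circ f)'(\bx;\cdot)$ with the support function of the right-hand side $S$ via a first-order expansion and the max formula. You then separate in $(X^*,w^*)$ after showing that $S$ is convex and weak$^*$ compact.

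The citation buys brevity and generality, since Clarke's rule covers regular, not necessarily convex, data. Your argument shows the three places where nonnegativity of the multipliers is used. It also shows that $S$ itself is the subdifferential, not merely its weak$^*$ closed convex hull, which is what appears in the general form of Clarke's chain rule. For that reason, your closing remark about quoting Clarke ``directly'' is only safe once one checks that this hull can be dropped, and your convexity step is exactly what supplies this.

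Two small attributions are slightly off, though neither affects validity. First, the inclusion $\sd(\Phi\circ f)(\bx)\subset\{x^*\mid\langle x^*,\cdot\rangle\le(\Phi\circ f)'(\bx;\cdot)\}$, which is all you need, follows from the subgradient inequality and the existence of the directional derivative alone. It does not come from local Lipschitz continuity; the reverse inclusion is where convexity of $\Phi\circ f$ would enter. Second, joint weak$^*$ continuity of $(\al,x^*)\mapsto\al x^*$ holds without any boundedness of the $\al_i$.
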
	



From now on, we assume that
$F\subset X$ is a nonempty,  closed, bounded and convex  set with $0_X\in\operatorname{int}F$.
The \emph{Minkowski functional}, \emph{support function}, and   \emph{polar set}  of $F$ are defined, respectively, by
$\rho_F(x):=\inf\{t\in\R_+\mid x\in tF\}$ for all $x\in X$, $\sigma_F(x^*):=\sup_{x\in F}\langle x^*,x\rangle$ for all $x^*\in X^*$, and $F^\circ:=\{x^*\in X^*\mid \sigma_F(x^*)\le 1\}$.
If $F=\B_X$, then $\rho_F=\|\cdot\|$,
$\sigma_F=\|\cdot\|^*$, and $F^\circ=\B_{X^*}$.
It is well known that $\sigma_F$ is convex and positively homogeneous; $F^\circ$ is weak$^*$ closed and convex;
$\rho_F$ and $\sigma_F$ are finite-valued; $\rho_F$ is sublinear; $\rho_{F^\circ}(x^*)=\sigma_{F}(x^*)$ for all $x^*\in X^*$; $\sigma_F(x^*)=0$ if and only if $x^*=0_{X^*}$; $\rho_F(x)=0$ if and only if $x=0_X$ \cite{Pen13,MorNam22,MorNam11}.
The convex subdifferential of the Minkowski functional of $F$  \cite[Exercise 3.108]{MorNam22} is given by
\begin{equation}\label{L2.2-1}
\partial\rho_F(\bx)= 
\begin{cases}
\left\{x^*\in X^*\mid \sigma_{F}(x^*)=1,\;\langle x^*,\bx\rangle=\rho_F(\bx)\right\} & \text{\rm if}\;\;\bx \ne 0_X,\\
F^\circ & \text{\rm otherwise}.
\end{cases}
\end{equation}

Let $\Omega\subset X$ be a nonempty, closed and convex set.
The minimal time function 
 is defined by
\begin{gather*}
\mathcal{T}^F_\Omega(x):=\inf\{t\in\R_+\mid \Omega\cap(x+tF)\ne\emptyset\}\;\;\text{for all }x\in X.
\end{gather*}	
If $F=\B_X$, then $\mathcal{T}^F_\Omega(x)=d(x,\Omega)$ for all $x\in X$.
By the standing assumption, $\mathcal{T}^F_\Omega$ is finite-valued, convex and globally Lipschitz continuous; $\mathcal{T}^F_\Omega(x)=0$ if and only if $x\in\Omega$; and by \cite[Proposition~3.3]{MorNam11},
\begin{gather}\label{mi}
\mathcal{T}^F_\Omega(x)=\inf_{\omega\in\Omega}\rho_F(\omega-x)\;\;\text{for any } x\in X.
\end{gather}	
The minimal time projection  of $x\in X$ to $\Omega$ is defined by
\begin{gather}\label{mp}
P^F_\Omega(x):=(x+\mathcal{T}^F_\Omega(x)F)\cap\Omega.
\end{gather}	
It is nonempty if $\Omega$ is compact \cite{MorNam11}.
The convex subdifferential of the minimal time function  \cite[Theorems 7.1(i) \& 7.3]{MorNam11} is given by
\begin{equation}\label{sm}
\partial \mathcal{T}^F_\Omega(\bx)= 
\begin{cases}
N_\Omega(\bx)\cap (-F^\circ)& \text{\rm if}\;\;\bx\in\Omega,\\
N_\Omega(x)\cap\left(-\partial\rho_F(x-\bx)\right)& \text{if}\;\; \bx\notin \Omega,
\end{cases}
\end{equation}
where, in the second case,  we assume that
$P^F_\Omega(\bx)\ne\emptyset$ and $x$ is any point in \(P^F_\Omega(\bar x)\).

\begin{lemma}\label{L2.7}
The following assertions hold.
\begin{enumerate}[\rm (i)]
\item\label{L2.7-1}
$\langle x^*,x\rangle\le\sigma_F(x^*)\cdot\rho_F(x)$ for all 
$x\in X$ and $x^*\in X^*$.
\item\label{L2.7-2}
Let $x\in \Omega$, $0_{X^*}\ne x^*\in N_\Omega(x)$, $ 0_X\ne z\in X$, and $\langle x^*,z\rangle=\sigma_F(-x^*)\cdot\rho_F(-z)$.
Then $x+z\notin\Omega$ and $x\in P^F_\Omega(x+z)$.
\end{enumerate}
\end{lemma}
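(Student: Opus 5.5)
Part (i) is a scaling argument. If $x=0_X$ the inequality is trivial, since the left-hand side vanishes and $\sigma_F(x^*)\ge0$ (because $0_X\in F$). For $x\ne0_X$ we have $\rho_F(x)>0$. I would use that $F$ is closed and $\rho_F(x)=\inf\{t\mid x\in tF\}$: for every $t>\rho_F(x)$ one has $x/t\in F$. Indeed, $x\in sF$ for some $s<t$, and $x/t=(s/t)(x/s)\in F$ by convexity together with $0_X\in F$. Hence $\langle x^*,x/t\rangle\le\sigma_F(x^*)$, that is, $\langle x^*,x\rangle\le t\,\sigma_F(x^*)$. Letting $t\downarrow\rho_F(x)$ gives the claim.

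For part (ii), note first that $0_{X^*}\ne x^*$ gives $\sigma_F(-x^*)>0$, and $z\ne0_X$ gives $\rho_F(-z)>0$. Therefore $\langle x^*,z\rangle>0$. Since $x^*\in N_\Omega(x)$, every $\omega\in\Omega$ satisfies $\langle x^*,\omega-x\rangle\le0$. If $x+z$ were in $\Omega$, we would get $\langle x^*,z\rangle\le0$, a contradiction. So $x+z\notin\Omega$.

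To show $x\in P^F_\Omega(x+z)=(x+z+\mathcal T^F_\Omega(x+z)F)\cap\Omega$, I would prove $\mathcal T^F_\Omega(x+z)=\rho_F(-z)$ and $-z\in\rho_F(-z)F$.

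The upper bound comes from \eqref{mi}: $\mathcal T^F_\Omega(x+z)\le\rho_F(x-(x+z))=\rho_F(-z)$.

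For the lower bound, take any $\omega\in\Omega$ and apply part (i) with the functional $-x^*$ and the vector $\omega-x-z$:
\[
\sigma_F(-x^*)\rho_F(\omega-x-z)\ge\langle -x^*,\omega-x-z\rangle=\langle x^*,x-\omega\rangle+\langle x^*,z\rangle\ge\langle x^*,z\rangle=\sigma_F(-x^*)\rho_F(-z).
\]
Dividing by $\sigma_F(-x^*)>0$ gives $\rho_F(\omega-(x+z))\ge\rho_F(-z)$. Taking the infimum over $\omega$ in \eqref{mi} yields $\mathcal T^F_\Omega(x+z)=\rho_F(-z)$.

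It remains to check that $x-(x+z)=-z\in\rho_F(-z)F$. The infimum defining $\rho_F$ is attained because $F$ is closed: by the first paragraph, $-z/t\in F$ for all $t>\rho_F(-z)$, and letting $t\downarrow\rho_F(-z)>0$ gives $-z/\rho_F(-z)\in F$ by closedness. Thus $x\in(x+z+\mathcal T^F_\Omega(x+z)F)\cap\Omega$, as required.

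The only delicate points are the attainment of the infimum in $\rho_F$ and the strict positivity of $\sigma_F(-x^*)$, which is needed to divide. Both follow from the standing assumptions on $F$ and the properties recalled before \eqref{L2.2-1}.
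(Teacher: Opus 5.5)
Your proof is correct and follows the paper's argument essentially step by step: the normal-cone contradiction showing $x+z\notin\Omega$, the two-sided bound $\mathcal{T}^F_\Omega(x+z)=\rho_F(-z)$ obtained by applying part (i) to $-x^*$ and $\omega-(x+z)$, and the final step $-z\in\rho_F(-z)F$ via closedness of $F$. The only difference is in (i): you let $t\downarrow\rho_F(x)$ in the inequality itself, so closedness of $F$ is not needed there, whereas the paper first proves $x\in\rho_F(x)F$, applies the support function, and then reuses that fact in (ii).
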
	

\begin{proof}[\normalsize Proof]\normalsize
\begin{enumerate}[\rm (i)]
\item 
We first prove that $x\in\rho_F(x)F$ for all $x\in X$.
If $x=0_X$, then the statement is obvious. 
Suppose that $x\ne 0_X$ and $\xi:=\rho_F(x)$.
Note that $\xi\in (0,\infty)$.
Then there exists a sequence $\{t_k\}\subset\R_+$ such that $x\in t_kF$, $t_k\ge\xi$ $(k\in\N)$ and $t_k\to\xi$ as
$k\to\infty$.
Set $u_k:=x/t_k$ $(k\in\N)$.
Then $\{u_k\}\subset F$ and $u_k\to x/\xi$ as $k\to\infty$.
Since $F$ is closed, we have $x\in\xi F$.
Let $x\in X$ and $x^*\in X^*$.
Then $x=\rho_F(x)u$ for some $u\in F$, and consequently,
\begin{gather*}
\langle x^*,x\rangle=\langle x^*,\rho_F(x) u\rangle=\rho_F(x)\langle x^*,u\rangle
\le\sigma_F(x^*)\cdot\rho_F(x).
\end{gather*}
\item 
If $x+z\in\Omega$, then 
\begin{gather*}
 0<\sigma_F(-x^*)\cdot\rho_F(-z)=\langle x^*,z\rangle= \langle x^*,(x+z)-x\rangle\overset{\eqref{CN}}{\le} 0,
 \end{gather*}
a contradiction.
Thus, $x+z\notin\Omega$.
For any $\omega\in\Omega$, by assertion \eqref{L2.7-1},
\begin{align*}
\sigma_F(-x^*)\cdot\rho_F(\omega-(x+z))
&\ge\langle -x^*,\omega-(x+z)\rangle\\
&=\langle x^*,z\rangle+\langle x^*,x-\omega\rangle\\
&\overset{\eqref{CN}}{\ge} \langle x^*,z\rangle=\sigma_F(-x^*)\cdot\rho_F(-z).
\end{align*}	
Then $\rho_F(\omega-(x+z))\ge\rho_F(-z)$ since
$\sigma_F(-x^*)>0$.
By \eqref{mi},
\begin{gather*}
\mathcal{T}^F_\Omega(x+z)=\inf_{\omega\in\Omega}\rho_F(\omega-(x+z)).
\end{gather*}	
Thus, 
\begin{gather*}
\rho_F(-z)\le \mathcal{T}^F_\Omega(x+z)\le \rho_F(x-(x+z))=\rho_F(-z),
\end{gather*}
and consequently,
$\mathcal{T}^F_\Omega(x+z)=\rho_F(-z)$.
Note that $-z\in \rho_F(-z)F$.
Then
\begin{gather*}
x=(x+z)-z\in (x+z+\rho_F(-z)F)\cap\Omega=(x+z+\mathcal{T}^F_\Omega(x+z)F)\cap\Omega.
\end{gather*}
Hence, $x\in P^F_\Omega(x+z)$.
\end{enumerate}
\end{proof}

Let $n\ge 2$ and 
$\Delta_n:=\{(t_1,\ldots,t_{n})\in\R^{n}_+\mid \sum_{i=1}^nt_i =1\}$.
Denote by $\mathbf{e}_1,\ldots,\mathbf{e}_n$ the standard basis vectors of $\R^n$ and $\pmb{\Psi}_n$ the family of all convex continuous functions $\psi:\Delta_n\to\R$ such that $\psi(\mathbf{e}_1)=\cdots=\psi(\mathbf{e}_n)=1$ and
\begin{gather*}
\psi(t_1,\ldots,t_n)\ge (1-t_i)\cdot\psi\left(\dfrac{t_1}{1-t_i},\ldots,\dfrac{t_{i-1}}{1-t_i},0,\dfrac{t_{i+1}}{1-t_i},\ldots,\dfrac{t_{n}}{1-t_i}\right)
\end{gather*}	
for all $(t_1,\ldots,t_n)\in\Delta_n$ with $t_i<1$ 
$(i=1,\ldots,n)$.
A function $\psi\in \pmb{\Psi}_n$ is called \textit{permutation-symmetric} if $\psi(t_1,\ldots,t_n)=\psi(t_{\sigma(1)},\ldots,t_{\sigma(n)})$ for all $(t_1,\ldots,t_n)\in\Delta_n$ and every permutation
$\sigma$ of $\{1,\ldots,n\}$.

The following facts can be found in \cite{Cuo26a}.
\begin{proposition}\label{P2.2}
Let $\psi\in\pmb{\Psi}_n$.
The following assertions hold.
\begin{enumerate}[\rm (i)]
\item\label{P2.2-1}
$\max\{t_1,\ldots,t_n\}\le\psi(t)\le 1$ for all $t:=(t_1,\ldots,t_n)\in\Delta_n$.
\item\label{P2.2-3}
If $0\le\alpha_i\le\beta_i$ $(i=1,\ldots,n)$ with $\sum_{i=1}^{n}\al_i>0$, then 
\begin{equation*}
\left(\sum_{i=1}^{n}\alpha_i\right)\psi\left(\frac{\alpha_1}{\sum_{i=1}^{n}\alpha_i},\ldots,\frac{\alpha_n}{\sum_{i=1}^{n}\alpha_i}\right)
\le\left(\sum_{i=1}^{n}\be_i\right)\psi\left(\frac{\be_1}{\sum_{i=1}^{n}\be_i},\ldots,\frac{\be_n}{\sum_{i=1}^{n}\be_i}\right).
\end{equation*}
\item\label{P2.2-2}
If $\psi$ is {permutation-symmetric}, then $\psi$ attains its minimum at $\left(\frac{1}{n},\ldots,\frac{1}{n}\right)$.
\item\label{P2.2-5} 
The function defined for $x:=(x_1,\ldots,x_n)\in\R^n$ by
\begin{equation}\label{T2.2-1}
\|x\|_\psi
:= \begin{cases}
\left(\sum_{i=1}^n|x_i|\right)\cdot\psi\left(\dfrac{|x_1|}{\sum_{i=1}^n|x_i|},\ldots,\dfrac{|x_{n}|}{\sum_{i=1}^n|x_i|}\right)  & \text{\rm if}\;x\ne0_{\R^n},\\
0 & \text{\rm otherwise}
\end{cases} 
\end{equation}
is a norm on $\R^n$.
\item\label{P2.2-6}
For all $x:=(x_1,\ldots,x_n)\in\R^n$, the dual norm of \eqref{T2.2-1} is given by
\begin{gather*}
\|x\|^*_{\psi} =\max_{(t_1,\ldots,t_n)\in\Delta_n}\dfrac{\sum_{i=1}^nt_i|x_i|}{\psi (t_1,\ldots,t_{n})}.
\end{gather*}	
\item\label{P2.2-7}
Let $\psi^*:\Delta_n\to\R$ be given by 
\begin{gather}\label{psi*}
\psi^*(t):=\max_{s\in\Delta_n}\dfrac{\langle s,t\rangle}{\psi(s)}
\;\;
\text{\rm for all}
\;\; t\in\Delta_n.
\end{gather}	
Then $\psi^*\in\pmb{\Psi}_n$ and $\|\cdot\|^*_{\psi}=\|\cdot\|_{\psi^*}$.
\end{enumerate}
\end{proposition}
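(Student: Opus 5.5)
The statement in question is Proposition~\ref{P2.2}. The paper attributes it to \cite{Cuo26a}, so one could simply cite that work. To give a self-contained argument, I would proceed in the order (i), (ii), (iv), (v), (vi), (iii), because later items rely on earlier ones.

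\emph{Items (i) and (ii).} For (i), the upper bound $\psi(t)\le1$ follows from convexity of $\psi$ on $\Delta_n$: write $t=\sum t_i\mathbf e_i$ and use $\psi(\mathbf e_i)=1$. For the lower bound I would iterate the defining inequality of $\pmb\Psi_n$. Removing one coordinate $j\neq i$ gives $\psi(t)\ge(1-t_j)\psi(\cdot)$, where the new point lives on a face of dimension one lower. Repeating this until only the $i$-th coordinate remains, the factors telescope to $t_i$ and the last value is $\psi(\mathbf e_i)=1$, so $\psi(t)\ge t_i$. The boundary cases $t_j=1$ are trivial.

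For (ii), define the positively homogeneous extension $g(\alpha):=(\sum\alpha_i)\psi(\alpha/\sum\alpha_i)$ on $\R^n_+\setminus\{0\}$. This $g$ is convex, being a perspective of a convex function. It suffices to show that $g$ is nondecreasing in each coordinate separately, since (ii) then follows by increasing one coordinate at a time. The defining inequality says exactly $g(\alpha)\ge g(\alpha-\alpha_i\mathbf e_i)$, i.e.\ the value with coordinate $i$ zeroed is not larger. Now consider the convex function $s\mapsto g(\alpha+s\mathbf e_i)$ on $[-\alpha_i,\infty)$. It is convex, its value at the left endpoint is not larger than its value at any point to the right (apply the defining inequality at every such point), and it grows at most linearly. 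A convex function on a half-line whose value at the left endpoint is minimal must be nondecreasing. I expect this monotonicity step to be the main obstacle.

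\emph{Item (iv).} Extend $g$ to $\R^n$ by $\|x\|_\psi=g(|x_1|,\ldots,|x_n|)$, with value $0$ at the origin. The function $g$ is positively homogeneous and convex on $\R^n_+$, and by (ii) it is monotone, so it is sublinear on $\R^n_+$. The triangle inequality on $\R^n$ then follows from $|x+y|\le|x|+|y|$ componentwise, combined with monotonicity and subadditivity of $g$. Definiteness comes from (i), which gives $\|x\|_\psi\ge\max_i|x_i|>0$ for $x\neq0$, and homogeneity is immediate.

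\emph{Items (v), (vi) and (iii).} For (v), compute $\|y\|_\psi^*=\sup\{\langle y,x\rangle:\|x\|_\psi\le1\}$. By symmetry under sign changes, we may take $x=\lambda t$ with $t\in\Delta_n$, $\lambda\ge0$, and signs matched to $y$. The constraint becomes $\lambda\psi(t)\le1$, and substituting gives the stated maximum; it is attained because $\psi$ is continuous and positive, by (i), on the compact set $\Delta_n$.

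For (vi), $\psi^*$ is a maximum of affine functions of $t$ and hence convex. It is continuous as a finite convex function that is also the restriction of the norm $\|\cdot\|^*_\psi$. We have $\psi^*(\mathbf e_i)=\max_s s_i/\psi(s)=1$, using (i) for the upper bound and $s=\mathbf e_i$ for equality. The defining inequality of $\pmb\Psi_n$ for $\psi^*$ follows from monotonicity of the dual norm in $|x_i|$, which is clear from (v). Finally, (v) states precisely that $\|\cdot\|^*_\psi=\|\cdot\|_{\psi^*}$.

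For (iii), let $\bar t$ be a minimiser of $\psi$. Averaging $\bar t$ over all permutations gives $(1/n,\ldots,1/n)$. By Jensen's inequality and permutation symmetry, $\psi(1/n,\ldots,1/n)\le\psi(\bar t)$, so the barycentre is also a minimiser.
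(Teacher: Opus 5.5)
Your proof is correct. The paper does not prove this proposition; it only refers to \cite{Cuo26a}, which builds on the $\psi$-norm framework of \cite{SaiKatTak00}. So you are giving a self-contained substitute for a citation, not an alternative to an argument in the text. Your organising device is worth keeping: the positively homogeneous extension $g(\alpha)=(\sum_i\alpha_i)\,\psi(\alpha/\sum_i\alpha_i)$, with the defining inequality of $\pmb{\Psi}_n$ read as ``setting one coordinate to zero does not increase $g$''. It shows that, for convex $\psi$, this inequality is equivalent to coordinatewise monotonicity of $g$. That is exactly the property \eqref{mt} the paper draws from (ii) in Remark~\ref{R3.3}, just as (i) is what gives coercivity there.

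A few points should be tightened.
\begin{enumerate}
\item The step you expect to be the main obstacle takes one line. If $-\alpha_i\le s_1<s_2$, write $s_1$ as a convex combination of $-\alpha_i$ and $s_2$, and use $h(-\alpha_i)\le h(s_2)$ to get $h(s_1)\le h(s_2)$. The remark about linear growth is not needed.
\item Set $g(0):=0$ and note that $g$ stays convex on all of $\R^n_+$, since it is positively homogeneous with $0\le g(\alpha)\le\sum_i\alpha_i$. This matters because the left endpoint $\alpha-\alpha_i\mathbf{e}_i$ of your ray is $0_{\R^n}$ when $\alpha$ is supported on the $i$-th coordinate alone.
\item In (iv), sublinearity of $g$ comes from convexity together with positive homogeneity. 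Monotonicity is what lets you pass from $|x+y|$ to $|x|+|y|$.
\item In (vi), the identity $\|\cdot\|^*_\psi=\|\cdot\|_{\psi^*}$ is not literally (v). It follows from (v) by the homogeneity computation $\|x\|_{\psi^*}=\max_{s\in\Delta_n}\langle s,|x|\rangle/\psi(s)=\|x\|^*_\psi$ for $x\ne0_{\R^n}$, where $|x|:=(|x_1|,\ldots,|x_n|)$.
\end{enumerate}
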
	

\begin{remark}\label{R2}
For all $(t_1,\ldots,t_n)\in\Delta_n$, define
\begin{gather*}
\psi_p(t_1,\ldots,t_n)
:=\begin{cases}
\left(t_1^p+\cdots+t_n^p\right)^{\frac{1}{p}}  & \text{if}\;p\in[1,\infty),\\
\max\{t_1,\ldots,t_n\} & \text{if}\;p=\infty.
\end{cases} 
\end{gather*}
Then 
$\psi_p\in\pmb{\Psi}_n$, and 
\begin{equation*}
\|x\|_{\psi_p}
=\begin{cases}
\left(|x_1|^p+\cdots+|x_n|^p\right)^{\frac{1}{p}}  & \text{if}\;p\in[1,\infty),\\	
\max\{|x_1|,\ldots,|x_n|\} & \text{if}\; p=\infty
\end{cases} 
\end{equation*}
for all $x:=(x_1,\ldots,x_n)\in \R^n$. 
By \eqref{psi*}, $\psi^*_p=\psi_q$ with $q\in[1,\infty]$ satisfying $\frac{1}{p}+\frac{1}{q}=1$.
The reader is referred to \cite{Cuo26a,SaiKatTak00} and the references therein for further examples of norms generated by functions in $\pmb{\Psi}_n$.
\end{remark}

\section{A General Convex Composition Minimisation Problem}\label{S3}
Let $f_1,\ldots,f_n:X\to\R_+$ be
locally Lipschitz continuous and convex.
Define $f:X\to\R^n_+$ by 
\begin{gather}\label{f}
f(x):=(f_1(x),\ldots,f_n(x))\;\;\text{for all}\;\;x\in X.
\end{gather}
Let $\Phi:\R^n_+\to\R_+$ be a convex function satisfying
\begin{gather}\label{mt}
\Phi(u_1,\ldots,u_n)\le\Phi(v_1,\ldots,v_n)\;\;\text{for all}\;\;
0\le u_i\le v_i\;(i=1,\ldots,n).
\end{gather}	
Consider the minimisation problem:
\begin{gather}
\label{P}
\tag{$P$}
{\rm{minimise}}\;F(x):=(\Phi\circ f)(x)\;\;\text{subject to}\;\;x\in X.
\end{gather}
Denote by $\operatorname{Sol}(P)$ the solution set of problem \eqref{P}.

\begin{lemma}\label{L3}
Define $\widetilde\Phi:\R^n\to\R_+$ by
\begin{gather}\label{d}
\widetilde\Phi(u):=\Phi(u^+)\;\;\text{for all}\;\;u:=(u_1,\ldots,u_n)\in\R^n.
\end{gather}
Then $\widetilde\Phi$ is convex and coordinatewise nondecreasing on $\R^n$, i.e.,
\begin{gather*}
\widetilde\Phi(u_1,\ldots,u_n)\le\widetilde\Phi(v_1,\ldots,v_n)\;\;\text{for all}\;\;
u_i\le v_i\;(i=1,\ldots,n).
\end{gather*}	
Moreover, $\widetilde\Phi=\Phi$ on $\R^n_+$.
\end{lemma}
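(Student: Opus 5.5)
The plan is to prove the three claims in the order: agreement with $\Phi$ on $\R^n_+$, monotonicity, convexity. Monotonicity will be used in the convexity step.

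\emph{Agreement on $\R^n_+$.} If $u\in\R^n_+$ then $u^+=u$, so $\widetilde\Phi(u)=\Phi(u)$ directly from \eqref{d}. Since $\Phi$ takes values in $\R_+$, so does $\widetilde\Phi$.

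\emph{Monotonicity.} Let $u_i\le v_i$ for $i=1,\ldots,n$. The map $\al\mapsto\al^+=\max\{\al,0\}$ is nondecreasing on $\R$, so $0\le u_i^+\le v_i^+$ for each $i$. Applying \eqref{mt} to $u^+$ and $v^+$ gives $\widetilde\Phi(u)=\Phi(u^+)\le\Phi(v^+)=\widetilde\Phi(v)$.

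\emph{Convexity.} Fix $u,v\in\R^n$ and $\lambda\in[0,1]$, and set $w:=\lambda u+(1-\lambda)v$. The map $\al\mapsto\al^+$ is convex, so coordinatewise
\begin{gather*}
0\le w^+\le\lambda u^++(1-\lambda)v^+.
\end{gather*}
The right-hand side lies in $\R^n_+$. By \eqref{mt} and then convexity of $\Phi$ on the convex set $\R^n_+$,
\begin{gather*}
\widetilde\Phi(w)=\Phi(w^+)\le\Phi\bigl(\lambda u^++(1-\lambda)v^+\bigr)\le\lambda\Phi(u^+)+(1-\lambda)\Phi(v^+)=\lambda\widetilde\Phi(u)+(1-\lambda)\widetilde\Phi(v).
\end{gather*}

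The only step needing care is the convexity argument. Convexity of $\Phi$ cannot be applied directly to $w^+$, because $w^+$ is generally not the convex combination $\lambda u^++(1-\lambda)v^+$. The monotonicity \eqref{mt} of $\Phi$ bridges this gap, using that both $w^+$ and the convex combination lie in $\R^n_+$. Everything else follows immediately from the definitions.
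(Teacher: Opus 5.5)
Your proof is correct and follows essentially the same argument as the paper. Convexity comes from the coordinatewise inequality $(\lambda u_i+(1-\lambda)v_i)^+\le\lambda u_i^++(1-\lambda)v_i^+$ combined with \eqref{mt} and the convexity of $\Phi$, and the nondecreasing property comes from the monotonicity of $\al\mapsto\al^+$ together with \eqref{mt}; the only difference is the order in which you present the three claims, which is immaterial.
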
	

\begin{proof}[\normalsize Proof]\normalsize
Let $u:=(u_1,\ldots,u_n),v:=(v_1,\ldots,v_n)\in\R^n$ and $\al\in[0,1]$.
We have 
\begin{gather}\label{c}
[\al u_i+(1-\al)v_i]^+\le\al u^+_i+(1-\al)v^+_i\;\;(i=1,\ldots,n).
\end{gather}
By \eqref{mt}--\eqref{c}, and the convexity of $\Phi$,
\begin{align*}
\widetilde\Phi(\al u+(1-\al)v)
&=\Phi((\al u+(1-\al)v)^+)\\
&\le\Phi(\al u^++(1-\al)v^+)\\
&\le\al\Phi(u^+)+(1-\al)\Phi(v^+)\\
&=\al\widetilde\Phi(u)+(1-\al)\widetilde\Phi(v).
\end{align*}
Thus, $\widetilde\Phi$ is convex.
If $u_i\le v_i$, then $0\le u^+_i\le v^+_i$  $(i=1,\ldots,n)$.
By \eqref{mt},
\begin{gather*}
\widetilde{\Phi}(u)=\Phi(u^+)\le\Phi(v^+)=\widetilde{\Phi}(v).
\end{gather*}	
Thus, $\widetilde\Phi$ is coordinatewise nondecreasing on $\R^n$.
The last assertion is obvious.
\end{proof}	

\begin{proposition}\label{P3.1}
\begin{enumerate}[\rm (i)]
\item\label{P3.1-1}		
The function $F$ is locally Lipschitz continuous and convex.	
Consequently, 
$\operatorname{Sol}(P)$ is closed and convex.	
\item\label{P3.1-3}
If $X$ is  a reflexive Banach space and $F$ is coercive, then  $\operatorname{Sol}(P)$ is nonempty and bounded.
\end{enumerate}
\end{proposition}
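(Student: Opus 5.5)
For assertion \eqref{P3.1-1}, I will replace $\Phi$ by its extension $\widetilde\Phi$ from Lemma~\ref{L3}. Since each $f_i$ takes values in $\R_+$, we have $f(x)\in\R^n_+$, and hence $F=\Phi\circ f=\widetilde\Phi\circ f$ on $X$.

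\emph{Convexity.} Let $x,y\in X$ and $\al\in[0,1]$. Convexity of each $f_i$ gives $f_i(\al x+(1-\al)y)\le \al f_i(x)+(1-\al)f_i(y)$ coordinatewise. The coordinatewise monotonicity of $\widetilde\Phi$ on $\R^n$ from Lemma~\ref{L3} then gives
\begin{gather*}
F(\al x+(1-\al)y)\le\widetilde\Phi(\al f(x)+(1-\al)f(y)).
\end{gather*}
By convexity of $\widetilde\Phi$, the right-hand side is at most $\al F(x)+(1-\al)F(y)$. Hence $F$ is convex.

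\emph{Local Lipschitz continuity.} Since $\widetilde\Phi$ is a finite convex function on $\R^n$, it is locally Lipschitz. Fix $\bx\in X$. Each $f_i$ is Lipschitz with constant $L_i$ on some ball $B(\bx,\delta)$. By continuity, $f(B(\bx,\delta))$ lies in a compact neighbourhood $K$ of $f(\bx)$, on which $\widetilde\Phi$ is Lipschitz with respect to the $\ell_1$ norm with some constant $M$. For $x,y\in B(\bx,\delta)$,
\begin{gather*}
|F(x)-F(y)|\le M\sum_{i=1}^n|f_i(x)-f_i(y)|\le M\Big(\sum_{i=1}^n L_i\Big)\|x-y\|.
\end{gather*}
Boundedness of $f$ near $\bx$ follows from the Lipschitz property.

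\emph{The solution set.} Since $F$ is continuous and convex, $\operatorname{Sol}(P)=\{x\in X\mid F(x)\le\inf F\}$ is a sublevel set of a continuous convex function. It is therefore closed and convex; if it is empty, the claim holds trivially.

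For assertion \eqref{P3.1-3}, I will apply the direct method.
\begin{itemize}
\item Set $m:=\inf_X F\ge0$, which is finite since $F\ge 0$. Pick any $x_0$.
\item By coercivity, the sublevel set $S:=\{x\mid F(x)\le F(x_0)\}$ is bounded. It is also closed and convex by the argument above, so it is weakly closed by Mazur's theorem.
\item In a reflexive Banach space, bounded weakly closed sets are weakly compact (Kakutani/Eberlein--Šmulian).
\item The function $F$ is convex and continuous, hence weakly lower semicontinuous.
\item Therefore $F$ attains its minimum on $S$, and this minimum is the global minimum, so $\operatorname{Sol}(P)\ne\emptyset$.
\item Finally, $\operatorname{Sol}(P)\subset S$, so it is bounded.
\end{itemize}

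The only delicate step is the Lipschitz estimate, specifically making sure $\widetilde\Phi$ is finite and convex on all of $\R^n$ so that local Lipschitzness of finite convex functions applies. This is exactly what Lemma~\ref{L3} provides. Everything else is standard.
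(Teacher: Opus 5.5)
Your proposal is correct and follows essentially the same route as the paper. You pass to $\widetilde\Phi$ from Lemma~\ref{L3}, combine the local Lipschitz bound of $\widetilde\Phi$ near $f(\bx)$ with those of the $f_i$, obtain convexity from monotonicity plus convexity, and get existence in the reflexive case by the direct method, which is what the paper's appeal to Brezis's Corollary~3.23 packages. The only differences are cosmetic: you bound $\operatorname{Sol}(P)$ by placing it inside a sublevel set rather than arguing by contradiction along a sequence, and you use monotonicity of $\widetilde\Phi$ instead of \eqref{mt} for $\Phi$ directly.
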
	

\begin{proof}[\normalsize Proof]\normalsize
Let $\bx\in X$ and $\|\cdot\|_{\R^n}$ be an arbitrary norm on $\R^n$.
Then, there exists a $\xi>0$ such that $\|\cdot\|_{\R^n}\le \xi\|\cdot\|_{\psi_\infty}$.
Since $f_1,\ldots,f_n$ have nonnegative values, we have $F=\widetilde\Phi\circ f$.
By Lemma~\ref{L3} and \cite[Corollary~2.153]{MorNam22}, there exists a neighbourhood $V$ of $f(\bx)$ and a constant $l_{\widetilde\Phi}>0$ such that
\begin{gather*}
|\widetilde\Phi(u)-\widetilde\Phi(v)|\le l_{\widetilde\Phi}\|u-v\|_{\R^n}\;\;\text{for all}\;u,v\in V.
\end{gather*}	
By the local Lipschitz continuity of $f_1,\ldots,f_n$ and the  continuity of $f$, there exists a neighbourhood $U$ of $\bx$ and constants $l_{f_1},\ldots,l_{f_n}>0$ such that $f(U)\subset V$ and
\begin{gather*}
|f_i(x)-f_i(y)|\le l_{f_i}\|x-y\|\;\;\text{for all}\;x,y\in U\;(i=1,\ldots,n).
\end{gather*}	
For every $x,y\in U$, we have
\begin{align*}
|F(x)-F(y)|
&=|\widetilde\Phi(f(x))-\widetilde\Phi(f(y))|\\
&\le l_{\widetilde\Phi}\|f(x)-f(y)\|_{\R^n}\\
&\le l_{\widetilde\Phi}\xi\max_{1\le i\le n}|f_i(x)-f_i(y)|\\
&\le l_{\widetilde\Phi}\xi\max\{l_{f_1},\ldots,l_{f_n}\}\|x-y\|.
\end{align*}	
Thus, $F$ is locally Lipschitz continuous at $\bx$.

Let $x,y\in X$ and $\al\in[0,1]$.
Then
\begin{align*}
F(\al x+(1-\al)y)
&\le \Phi(\al f_1(x)+(1-\al)f_1(y),\ldots,\al f_n(x)+(1-\al)f_n(y))\\
&=\Phi(\al f(x)+(1-\al)f(y))\\
&\le \al\Phi(f(x))+(1-\al)\Phi(f(y))\\
&=\al F(x)+(1-\al)F(y).
\end{align*}	
Thus, $F$ is convex.
Its continuity and convexity imply that
$\operatorname{Sol}(P)$ is closed and convex.
 
Suppose that $X$ is a reflexive Banach space  and $F$ is coercive.
By \cite[Corollary~3.23]{Bre11}, 
$\text{\rm {Sol}}(P)$ is nonempty.
If $\text{\rm {Sol}}(P)$ is unbounded,
then there exists a sequence $\{x_k\}\subset\text{\rm {Sol}}(P)$ with $\|x_k\|\to\infty$.
By coercivity, $F(x_k)\to\infty$, which contradicts the fact that $F$ has the same finite value at every solution.
\end{proof}	

\begin{remark}\label{R3.3}
\begin{enumerate}[\rm (i)]
\item\label{R3.3-1}
Let  $\Phi(u):=\|u\|_\psi$ for all $u\in\R^n_+$ and some $\psi\in \pmb{\Psi}_n$.
By Proposition~\ref{P2.2}\eqref{P2.2-1}, $F$ is coercive if there exists an $i\in\{1,\ldots,n\}$ such that  $f_i$ is coercive.
By Proposition~\ref{P2.2}\eqref{P2.2-3},  $\|\cdot\|_\psi$ satisfies the  monotonicity property \eqref{mt}.
\item\label{R3.3-2}
If $\Phi$ and $f_1,\ldots,f_n$ are globally Lipschitz continuous, then so is $F$.
\end{enumerate}
\end{remark}	

\begin{proposition}\label{P3.3}
Let $\bx\in X$, and $f$ be given by \eqref{f}.
Then
\begin{gather*}
\partial F(\bx)=
\left\{\sum_{i=1}^{n}\al_ix^*_i\Bigm|
(\al_1,\ldots,\al_n)\in\partial\Phi(f(\bx)),\;
\al_i\ge0,\;
x_i^*\in\partial f_i(\bx)\;(i=1,\ldots,n)\right\}.
\end{gather*}
\end{proposition}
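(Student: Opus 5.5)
The plan is to reduce the statement to the chain rule of Lemma~\ref{L1.2}, applied to the extension $\widetilde\Phi$ from Lemma~\ref{L3} instead of $\Phi$ itself. The reason is that $\Phi$ is only defined on $\R^n_+$. Here $\partial\Phi(u)$ for $u\in\R^n_+$ is understood as the set of $\al\in\R^n$ with $\langle \al,v-u\rangle\le\Phi(v)-\Phi(u)$ for all $v\in\R^n_+$, that is, the subdifferential of $\Phi$ extended by $\infty$ outside $\R^n_+$. Since $f$ takes values in $\R^n_+$ and $\widetilde\Phi=\Phi$ on $\R^n_+$, we have $F=\widetilde\Phi\circ f$.

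First I will check the hypotheses of Lemma~\ref{L1.2} for $\widetilde\Phi$ and $f$.
\begin{enumerate}[(a)]
\item By Lemma~\ref{L3}, $\widetilde\Phi:\R^n\to\R$ is finite and convex.
\item The $f_i$ are convex and locally Lipschitz at $\bx$ by assumption.
\item $F=\widetilde\Phi\circ f$ is convex by Proposition~\ref{P3.1}\eqref{P3.1-1}.
\item It remains to show that every $\al\in\partial\widetilde\Phi(u)$ is nonnegative. Fix $i$ and test the subgradient inequality at $v:=u-\mathbf e_i$. This gives $-\al_i\le\widetilde\Phi(u-\mathbf e_i)-\widetilde\Phi(u)\le0$, where the last inequality is the coordinatewise monotonicity from Lemma~\ref{L3}. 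Hence $\al_i\ge0$.
\end{enumerate}
Lemma~\ref{L1.2} then gives $\partial F(\bx)=\{\sum_i\al_ix_i^*\mid \al\in\partial\widetilde\Phi(f(\bx)),\ x_i^*\in\partial f_i(\bx)\}$.

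The remaining, and main, step is the identification
\begin{gather*}
\partial\widetilde\Phi(u)=\partial\Phi(u)\cap\R^n_+\quad\text{for } u:=f(\bx)\in\R^n_+.
\end{gather*}

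For ``$\subset$'', take $\al\in\partial\widetilde\Phi(u)$. Restricting its subgradient inequality to $v\in\R^n_+$, where $\widetilde\Phi(v)=\Phi(v)$ and $\widetilde\Phi(u)=\Phi(u)$, gives $\al\in\partial\Phi(u)$. Nonnegativity was shown in (d) above.

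For ``$\supset$'', take $\al\in\partial\Phi(u)$ with $\al\ge0$, and let $v\in\R^n$ be arbitrary. Since $v^+\in\R^n_+$, $v^+\ge v$ and $\al\ge0$,
\begin{gather*}
\widetilde\Phi(v)-\widetilde\Phi(u)=\Phi(v^+)-\Phi(u)\ge\langle\al,v^+-u\rangle\ge\langle\al,v-u\rangle .
\end{gather*}
Hence $\al\in\partial\widetilde\Phi(u)$.

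Substituting this identification into the formula from Lemma~\ref{L1.2} yields exactly the stated representation of $\partial F(\bx)$, including the explicit constraint $\al_i\ge0$. The only delicate point is handling subgradients of $\Phi$ at boundary points of $\R^n_+$, where $\partial\Phi$ may contain vectors with negative components. These are removed precisely by the constraint $\al_i\ge0$, and the ``$\supset$'' inclusion is where that sign condition is used.
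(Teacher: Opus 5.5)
Your proposal is correct and takes essentially the same route as the paper. Both pass to the extension $\widetilde\Phi$ of Lemma~\ref{L3} and prove $\partial\widetilde\Phi(u)=\partial\Phi(u)\cap\R^n_+$ for $u\in\R^n_+$ by the same two inclusions (test at $u-t\mathbf{e}_i$ for nonnegativity, restrict to $\R^n_+$, use $v^+\ge v$), then apply Lemma~\ref{L1.2} to $F=\widetilde\Phi\circ f$; your explicit check of that lemma's hypotheses is slightly more complete than the paper, which leaves it implicit.
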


\begin{proof}[\normalsize Proof]\normalsize
Let $\widetilde{\Phi}$ be given by \eqref{d}.
By Lemma~\ref{L3}, it is convex and  coordinatewise nondecreasing on $\R^n$.
We first prove that
\begin{gather}\label{e}
\partial\widetilde{\Phi}(u)=\partial\Phi(u)\cap\R^n_+\;\;\text{for all}\;\;u\in\R^n_+.	
\end{gather}	
Let
$u\in\R^n_+$ and $\al:=(\al_1,\ldots,\al_n)\in\partial\widetilde{\Phi}(u)$.
For any $t>0$, we have
\begin{gather*}
-t\al_i=\langle\al,(u-t\mathbf{e}_i)-u\rangle
\le\widetilde\Phi(u-t\mathbf{e}_i)-\widetilde\Phi(u)
\le0\;\;(i=1,\ldots,n).
\end{gather*}
Hence, $\al_i\ge0$ $(i=1,\ldots,n)$.
Since $\widetilde\Phi=\Phi$ on $\R^n_+$, restricting the subgradient inequality to $\R^n_+$ gives $\al\in\partial\Phi(u)$.
Conversely, let $\al\in\partial\Phi(u)$ with
$\al_i\ge0$ $(i=1,\ldots,n)$.
For every $v\in\R^n$, we have
\begin{gather*}
\langle\al,v-u\rangle
\le\langle\al,v^+-u\rangle
\le\Phi(v^+)-\Phi(u)
=\widetilde\Phi(v)-\widetilde\Phi(u).
\end{gather*}
Thus, $\al\in\partial\widetilde\Phi(u)$.
This proves \eqref{e}.
Since $f_i$ $(i=1,\ldots,n)$ has non-negative values, we have $F=\widetilde\Phi\circ f$.
By Lemma~\ref{L1.2},
\begin{gather*}
\partial F(\bx)= \left\{\sum_{i=1}^{n}\al_ix^*_i\Bigm|(\al_1,\ldots,\al_n)\in\partial \widetilde\Phi(f(\bx)),\ x_i^*\in\partial f_i(\bx)\; (i=1,\ldots,n)\right\}.
\end{gather*}	
From this and \eqref{e}, we obtain the desired statement.
\end{proof}

\begin{theorem}\label{T3.6}
Let $\bx\in X$.
The following assertions hold.
\begin{enumerate}[\rm (i)]
\item\label{T3.6-1}
$\bar x\in \text{\rm Sol}(P)$ if and only if there exist
$(\al_1,\ldots,\al_n)\in\partial \Phi(f(\bx))$ with $\al_i\ge 0$ and $x_i^*\in\partial f_i(\bx)$ $(i=1,\ldots,n)$ such that
$\displaystyle\sum_{i=1}^{n}\al_ix^*_i=0_{X^*}$.
\item\label{T3.6-2}
Let $\bar x\in \text{\rm Sol}(P)$,  $\al_i$ and  $x_i^*$ $(i=1,\ldots,n)$ satisfy the conditions in assertion \eqref{T3.6-1}.
Then
\begin{gather}\label{T3.5-0}
\text{\rm Sol}(P)=
\left\{x\in X\;\middle|\;\;
\begin{aligned}
& x_i^*\in\partial f_i(x)\;\;\text{for every}\;i\;\text{with}\;\al_i>0,\\ 
& (\al_1,\ldots,\al_n)\in\partial\Phi(f(x))
\end{aligned}
\right\}.
\end{gather}
\end{enumerate}
\end{theorem}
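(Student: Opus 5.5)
Assertion (i) follows from Fermat's rule for convex functions. By Proposition~\ref{P3.1}\eqref{P3.1-1}, $F$ is convex and finite-valued, so $\bx\in\operatorname{Sol}(P)$ if and only if $0_{X^*}\in\partial F(\bx)$. Proposition~\ref{P3.3} describes $\partial F(\bx)$ exactly as the set of sums $\sum_i\al_ix_i^*$ with $(\al_1,\ldots,\al_n)\in\partial\Phi(f(\bx))$, $\al_i\ge0$ and $x_i^*\in\partial f_i(\bx)$. Substituting this description gives (i) in both directions.

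For (ii), fix $\bx\in\operatorname{Sol}(P)$ together with $\al:=(\al_1,\ldots,\al_n)$ and $x_i^*$ as in (i). Denote the right-hand side of \eqref{T3.5-0} by $S$. I will prove both inclusions.

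\emph{Inclusion $S\subset\operatorname{Sol}(P)$.} Take $x\in S$. For indices with $\al_i=0$, choose any $y_i^*\in\partial f_i(x)$; this set is nonempty because $f_i$ is continuous and convex. For indices with $\al_i>0$, put $y_i^*:=x_i^*$. Then $\sum_i\al_iy_i^*=\sum_i\al_ix_i^*=0_{X^*}$, and $\al\in\partial\Phi(f(x))$ with $\al_i\ge0$. Hence (i), applied at $x$, gives $x\in\operatorname{Sol}(P)$.

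\emph{Inclusion $\operatorname{Sol}(P)\subset S$.} This is the main step. Take $x\in\operatorname{Sol}(P)$, so $F(x)=F(\bx)$. I will run a chain of inequalities forced to be equalities. First, the subgradient inequality for $x_i^*\in\partial f_i(\bx)$ gives $f_i(x)-f_i(\bx)\ge\langle x_i^*,x-\bx\rangle$. Multiplying by $\al_i\ge0$ and summing yields
\[
\sum_i\al_i\bigl(f_i(x)-f_i(\bx)\bigr)\ge\Bigl\langle\sum_i\al_ix_i^*,x-\bx\Bigr\rangle=0 .
\]
Second, since $\al\in\partial\Phi(f(\bx))$ and $f(x)\in\R^n_+$,
\[
0=\Phi(f(x))-\Phi(f(\bx))\ge\langle\al,f(x)-f(\bx)\rangle .
\]
Together these give $\langle\al,f(x)-f(\bx)\rangle=0$. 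Each summand $\al_i\bigl(f_i(x)-f_i(\bx)-\langle x_i^*,x-\bx\rangle\bigr)$ is nonnegative and their sum is zero, so each vanishes. Hence, whenever $\al_i>0$,
\[
f_i(x)=f_i(\bx)+\langle x_i^*,x-\bx\rangle .
\]
Then for any $y\in X$,
\[
f_i(y)\ge f_i(\bx)+\langle x_i^*,y-\bx\rangle=f_i(x)+\langle x_i^*,y-x\rangle ,
\]
so $x_i^*\in\partial f_i(x)$.

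It remains to show $\al\in\partial\Phi(f(x))$. Take any $v\in\R^n_+$. Then
\[
\Phi(v)\ge\Phi(f(\bx))+\langle\al,v-f(\bx)\rangle=\Phi(f(x))+\langle\al,v-f(x)\rangle ,
\]
using $\Phi(f(x))=\Phi(f(\bx))$ and $\langle\al,f(x)-f(\bx)\rangle=0$. Thus $\al\in\partial\Phi(f(x))$ and $x\in S$.

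The only point needing care is that subdifferentials of $\Phi$ are taken relative to the domain $\R^n_+$. This is consistent throughout because all vectors $f(\cdot)$ lie in $\R^n_+$.
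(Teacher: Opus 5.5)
Your proof is correct and follows the paper's argument; the inclusion $\operatorname{Sol}(P)\subset S$ is handled by exactly the same chain of forced equalities. The only variation is in $S\subset\operatorname{Sol}(P)$: you apply the sufficiency part of (i) at $x$, padding the indices with $\al_i=0$ by arbitrary subgradients (these exist because each $f_i$ is continuous and convex). The paper instead argues directly that $f_i(x)-f_i(\bx)=\langle x_i^*,x-\bx\rangle$ whenever $\al_i>0$, hence $\langle\al,f(x)-f(\bx)\rangle=0$, and then uses $\al\in\sd\Phi(f(x))$ to obtain $\Phi(f(\bx))\ge\Phi(f(x))$; both routes are valid.
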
	

\begin{proof}[\normalsize Proof]\normalsize
Assertion \eqref{T3.6-1} follows from Proposition~\ref{P3.3}.
Denote by $\mathrm R$ the set on the right-hand side
of \eqref{T3.5-0} and $\al:=(\al_1,\ldots,\al_n)$.
Let $x\in\text{\rm R}$.
Then for every $i$ with $\al_i>0$, we have 
$x_i^*\in\partial f_i(x)\cap \partial f_i(\bx)$, and consequently,
$f_i(x)-f_i(\bx)=\langle x_i^*,x-\bx\rangle$.
Then
\begin{gather*}
\langle \al,f(x)-f(\bx)\rangle=\sum_{i=1}^n\al_i(f_i(x)-f_i(\bx))
=\sum_{i=1}^n\al_i\langle x_i^*,x-\bx\rangle
=\Big\langle \sum_{i=1}^n\al_ix_i^*,x-\bx\Big\rangle=0.
\end{gather*}	
Since $\al\in\partial\Phi(f(x))$, we have
\begin{gather*}
\Phi(f(\bx))-\Phi(f(x))\ge \langle \al,f(\bx)-f(x)\rangle=0.
\end{gather*}
On the other hand, we always have $\Phi(f(\bx))-\Phi(f(x))\le 0$.
Thus, $\Phi(f(\bx))=\Phi(f(x))$.
Therefore, $x\in\operatorname{Sol}(P)$.

Let $x\in\text{\rm Sol}(P)$. 
Then
\begin{gather*}
0=\Phi(f(x))-\Phi(f(\bx))\ge \langle \al,f(x)-f(\bx)\rangle.
\end{gather*}
We have $f_i(x)-f_i(\bx)\ge \langle x^*_i,x-\bx\rangle$ $(i=1,\ldots,n)$, and consequently,
\begin{gather*}
\langle \al,f(x)-f(\bx)\rangle\ge\sum_{i=1}^n\al_i\langle x_i^*,x-\bx\rangle
=\Big\langle \sum_{i=1}^n\al_ix_i^*,x-\bx\Big\rangle=0.
\end{gather*}	
Therefore, $\langle \al,f(x)-f(\bx)\rangle=0$, and consequently,
\begin{gather*}
\sum_{i=1}^{n}\al_i\left(f_i(x)-f_i(\bx)-\langle x^*_i,x-\bx\rangle\right)=0.
\end{gather*}
Thus, $f_i(x)-f_i(\bx)-\langle x^*_i,x-\bx\rangle=0$ when $\al_i>0$.
Fix  $i\in\{1,\ldots,n\}$ with $\al_i>0$.
Since $x_i^*\in\partial f_i(\bx)$, we have
\begin{align*}
\langle x_i^*,u-x\rangle
&=\langle x_i^*,u-\bx\rangle+\langle x_i^*,\bx-x\rangle\\
&\le (f_i(u)-f_i(\bx))+(f_i(\bx)-f_i(x))
=f_i(u)-f_i(x)
\end{align*}
for every $u\in X$.
Thus, $x_i^*\in \partial f_i(x)$ when $\al_i>0$.

Let $y\in \R^n_+$.
Since  $\al\in\partial\Phi(f(\bx))$ and $\Phi(f(\bx))=\Phi(f(x))$, we have
$\Phi(y)-\Phi(f(x))\ge \langle \al,y-f(\bx)\rangle$, and consequently,
\begin{align*}
\Phi(y)-\Phi(f(x))
&\ge\langle \al,y-f(\bx)\rangle\\
&=\langle\al,y-f(x)\rangle+\langle\al,f(x)-f(\bx)\rangle =\langle\al,y-f(x)\rangle.
\end{align*}
Thus, $\al\in\partial\Phi(f(x))$.
Hence, $x\in \text{\rm R}$.
This proves \eqref{T3.6-2}.
\end{proof}	

\begin{remark}
In \cite[Lemma 2(a) \& Theorem 5]{Bur91}, the authors showed that the set of subgradients satisfying the optimality condition is independent of the chosen solution. 
Theorem~\ref{T3.6}\eqref{T3.6-2} complements this result by providing a characterisation adapted to the composition structure $F=\Phi\circ f$.
\end{remark}

The next statement is a direct consequence of Theorem~\ref{T3.6}, formulas \eqref{sn} and \eqref{T2.2-1}.
\begin{corollary}\label{C3.5}
Let $\Phi(u):=\|u\|_\psi$ for all $u\in\R^n_+$ and some $\psi\in\pmb{\Psi}_n$, and $\bx\in X$.
The following assertions hold.
\begin{enumerate}[\rm (i)]
\item\label{C3.5-1}
$\bar x\in \text{\rm Sol}(P)$ if and only if there exist
$(\al_1,\ldots,\al_n)\in\R^n_+$ and $x_i^*\in\partial f_i(\bx)$ $(i=1,\ldots,n)$ such that
\begin{gather*}
\|(\al_1,\ldots,\al_n)\|^*_\psi\le 1,\;\; \sum_{i=1}^{n}\al_if_i(\bx)=\|(f_1(\bx),\ldots,f_n(\bx))\|_\psi,\;\;\sum_{i=1}^{n}\al_ix^*_i=0_{X^*}.
\end{gather*}	
If $f_i(\bx)>0$ for some $i\in\{1,\ldots,n\}$, then $\|(\al_1,\ldots,\al_n)\|^*_\psi=1$.
\item\label{C3.5-2}
Let $\bar x\in \text{\rm Sol}(P)$, $\al_i$ and $x_i^*$ $(i=1,\ldots,n)$ satisfy the conditions in assertion \eqref{C3.5-1}.
Then
\begin{gather*}
\text{\rm Sol}(P)=
\left\{x\in X\;\middle|\;\;
\begin{aligned}
& x_i^*\in\partial f_i(x)\;\;\text{for every}\;i\;\text{with}\;\al_i>0,\\ 
& \sum_{i=1}^{n}\al_if_i(x)=\|(f_1(x),\ldots,f_n(x))\|_\psi
\end{aligned}
\right\}.
\end{gather*}
\end{enumerate}
\end{corollary}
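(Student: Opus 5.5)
The plan is to specialise Theorem~\ref{T3.6} to $\Phi=\|\cdot\|_\psi$ restricted to $\R^n_+$. The only real work is a description of $\partial\Phi(u)\cap\R^n_+$ for $u\in\R^n_+$. First note that $\Phi$ is convex and satisfies \eqref{mt} by Remark~\ref{R3.3}\eqref{R3.3-1}. Hence Theorem~\ref{T3.6} applies.

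The key step is the claim that, for $u\in\R^n_+$ and $\al\in\R^n_+$,
\[
\al\in\partial\Phi(u)\iff \|\al\|^*_\psi\le 1\ \text{and}\ \langle\al,u\rangle=\|u\|_\psi .
\]
Here $\partial\Phi(u)$ is understood with respect to test points in $\R^n_+$, as in the proof of Theorem~\ref{T3.6}. Applying \eqref{sn} directly to the norm on all of $\R^n$ is not quite enough, since $\Phi$ is only defined on $\R^n_+$.

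For ``$\Leftarrow$'', take $v\in\R^n_+$. Then $\langle\al,v\rangle\le\|\al\|^*_\psi\|v\|_\psi\le\|v\|_\psi$, so $\langle\al,v-u\rangle\le\|v\|_\psi-\|u\|_\psi$.

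For ``$\Rightarrow$'', test with $v=0$ and $v=2u$. This gives $\langle\al,u\rangle=\|u\|_\psi$, and then $\langle\al,v\rangle\le\|v\|_\psi$ for all $v\in\R^n_+$. To pass to arbitrary $w\in\R^n$, use $\al\ge0$ and $\|\cdot\|_\psi$ depending only on absolute values (formula \eqref{T2.2-1}): $\langle\al,w\rangle\le\langle\al,|w|\rangle\le\||w|\|_\psi=\|w\|_\psi$. Hence $\|\al\|^*_\psi\le1$. This extension from the orthant to $\R^n$ is the one point needing care, and the symmetry of $\|\cdot\|_\psi$ under sign changes handles it.

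Substituting this characterisation into Theorem~\ref{T3.6}\eqref{T3.6-1} yields assertion \eqref{C3.5-1}. For the extra clause, suppose $f_i(\bx)>0$ for some $i$. Then $f(\bx)\ne0$ and $\|f(\bx)\|_\psi=\langle\al,f(\bx)\rangle\le\|\al\|^*_\psi\|f(\bx)\|_\psi$, so $\|\al\|^*_\psi\ge1$ and therefore equals $1$.

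For \eqref{C3.5-2}, the vector $\al$ from \eqref{C3.5-1} is fixed, nonnegative, and has $\|\al\|^*_\psi\le1$. So the condition $\al\in\partial\Phi(f(x))$ in \eqref{T3.5-0} is equivalent, by the same characterisation at $u=f(x)\in\R^n_+$, to $\sum_i\al_if_i(x)=\|f(x)\|_\psi$. Substituting this into Theorem~\ref{T3.6}\eqref{T3.6-2} gives the stated description of $\operatorname{Sol}(P)$.
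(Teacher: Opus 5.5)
Your proposal is correct and follows the paper's route: the paper also obtains the corollary directly from Theorem~\ref{T3.6}, identifying the nonnegative subgradients of $\|\cdot\|_\psi$ on $\R^n_+$ via \eqref{sn} together with the absolute-value structure \eqref{T2.2-1}. Your direct proof that $\partial\Phi(u)\cap\R^n_+=\{\al\in\R^n_+\mid\|\al\|^*_\psi\le1,\ \langle\al,u\rangle=\|u\|_\psi\}$ makes explicit the step the paper leaves to the reader; you test with $v=0$ and $v=2u$, then use sign symmetry to pass from the orthant to all of $\R^n$.
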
	

\section{Optimisation with Minimal Time Functions}\label{S4}
Let $F_1,\ldots,F_n, \Omega_1,\ldots,\Omega_n \subset X$ be nonempty, closed and convex.
Suppose that $F_1,\ldots,F_n$ are bounded and  $0_X\in\bigcap_{i=1}^n\operatorname{int}F_i$.
In this section, we consider a particular case of problem \eqref{P} with $\Phi(u):=\|u\|_\psi$ for all $u\in\R^n_+$ and some $\psi\in\pmb{\Psi}_n$, 
\begin{gather*}
f_i(x):=\mathcal{T}^{F_i}_{\Omega_i}(x)\;\;\text{for all}\;\;x\in X\;(i=1,\ldots,n).
\end{gather*}
These functions are finite-valued, convex and globally Lipschitz continuous.
The optimisation model is of the form:
\begin{gather*}
{\rm{minimise}}\;F(x):=\|(\mathcal{T}^{F_1}_{\Omega_1}(x),\ldots,\mathcal{T}^{F_n}_{\Omega_n}(x))\|_\psi\;\;\text{subject to}\;\;x\in X.
\end{gather*}

\begin{proposition}\label{P4.3}
If $X$ is  a reflexive Banach space and  one of the sets $\Omega_1,\ldots,\Omega_n$ is bounded, then  $\operatorname{Sol}(P)$ is nonempty and bounded.
\end{proposition}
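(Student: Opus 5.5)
The plan is to reduce the statement to Proposition~\ref{P3.1}\eqref{P3.1-3}. The functions $f_i=\mathcal{T}^{F_i}_{\Omega_i}$ are nonnegative, convex and globally Lipschitz continuous, so the standing hypotheses of Section~\ref{S3} hold. By Remark~\ref{R3.3}\eqref{R3.3-1}, $\Phi=\|\cdot\|_\psi$ satisfies \eqref{mt}, and $F$ is coercive as soon as one component $f_i$ is coercive. So it suffices to show: if $\Omega_i$ is bounded, then $\mathcal{T}^{F_i}_{\Omega_i}$ is coercive. Proposition~\ref{P3.1}\eqref{P3.1-3} then gives that $\operatorname{Sol}(P)$ is nonempty and bounded.

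For the coercivity, fix $i$ with $\Omega_i$ bounded. Since $F_i$ is bounded, there is $M>0$ with $F_i\subset M\B_X$. Hence $x\in tF_i$ implies $\|x\|\le tM$, which gives
\begin{gather*}
\rho_{F_i}(x)\ge \|x\|/M\quad\text{for all } x\in X.
\end{gather*}
Choose $R>0$ with $\Omega_i\subset R\B_X$. Formula \eqref{mi} then yields
\begin{gather*}
\mathcal{T}^{F_i}_{\Omega_i}(x)=\inf_{\omega\in\Omega_i}\rho_{F_i}(\omega-x)\ge \inf_{\omega\in\Omega_i}\frac{\|\omega-x\|}{M}\ge\frac{\|x\|-R}{M},
\end{gather*}
and the right-hand side tends to $\infty$ as $\|x\|\to\infty$.

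It remains to pass from a coercive component to a coercive $F$. To confirm the claim in Remark~\ref{R3.3}\eqref{R3.3-1} explicitly, apply Proposition~\ref{P2.2}\eqref{P2.2-1}. For $u\in\R^n_+\setminus\{0\}$, write $t_j:=u_j/\sum_k u_k$; then
\begin{gather*}
\|u\|_\psi=\Big(\sum_k u_k\Big)\psi(t)\ge\Big(\sum_k u_k\Big)\max_j t_j=\max_j u_j\ge u_i.
\end{gather*}
Thus $F(x)\ge \mathcal{T}^{F_i}_{\Omega_i}(x)\to\infty$ as $\|x\|\to\infty$.

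None of these steps is a real obstacle. The only point needing care is the lower bound $\rho_{F_i}\ge\|\cdot\|/M$, and this is exactly where the boundedness of the dynamics set $F_i$ (a standing assumption) is used alongside the boundedness of $\Omega_i$.
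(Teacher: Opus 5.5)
Your proof is correct and follows essentially the same route as the paper. Both arguments prove coercivity of the minimal time function attached to the bounded target, using the boundedness of $F_i$ and $\Omega_i$, and then invoke Remark~\ref{R3.3}\eqref{R3.3-1} and Proposition~\ref{P3.1}\eqref{P3.1-3}. The only cosmetic difference is that the paper works directly from the definition of $\mathcal{T}^{F_i}_{\Omega_i}$ to get $\mathcal{T}^{F_i}_{\Omega_i}(x)\ge d(x,\Omega_i)/\gamma\ge\|x\|/\gamma-1$, whereas you go through formula \eqref{mi} and the bound $\rho_{F_i}\ge\|\cdot\|/M$, which gives the same estimate.
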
	

\begin{proof}[\normalsize Proof]\normalsize
Without loss of generality, we can assume that $\Omega_1$ is bounded.
Since $F_1$ and $\Omega_1$ are bounded, there exists a $\gamma>0$ such that $F_1\subset \gamma\B_X$ and $\Omega_1\subset\gamma\B_X$.
Let $x\in X$. 
Then the set
$T(x):=\{t\in\R_+\mid\Omega_1\cap(x+tF_1)\ne\emptyset\}$ is nonempty. 
For each $t\in T(x)$, there exist $z\in\Omega_1$ and $u\in F_1$ such that $z=x+tu$. 
Thus,
$d(x,\Omega_1)\le\|x-z\|=t\|u\|\le t\gamma$, and consequently,
$f_1(x)=\inf T(x)\ge\frac{1}{\gamma}d(x,\Omega_1).$
We have
\begin{align*}
f_1(x)
\ge\frac{1}{\gamma}\left(\inf_{\omega\in\Omega_1}\|x-\omega\|\right)
\ge\frac{1}{\gamma}\left(\inf_{\omega\in\Omega_1}(\|x\|-\|\omega\|)\right)
\ge\frac{\|x\|}{\gamma}-1.
\end{align*}	
Thus,
$f_1(x)\to\infty\;\;\text{as}\;\;\|x\|\to\infty$.
By Remark~\ref{R3.3}\eqref{R3.3-1}, $F$ is coercive.
The statement is a consequence of Proposition~\ref{P3.1}\eqref{P3.1-3}.
\end{proof}	

\begin{remark}
\begin{enumerate}[\rm (i)]
\item		
The sum norm version of Proposition~\ref{P4.3} can be found in	\cite[Proposition~4.1]{MorNam11+}.
\item 
By Proposition~\ref{P3.1}\eqref{P3.1-1} and Remark~\ref{R3.3}\eqref{R3.3-2}, $F$ is finite-valued,  convex and globally Lipschitz continuous. 
Thus, $\operatorname{Sol}(P)$ is closed and convex.	
\end{enumerate}
\end{remark}

The next statement is a consequence of Corollary~\ref{C3.5}.
\begin{theorem}\label{T3.5}
Let $\bar{x}\in X$.
The following assertions hold.
\begin{enumerate}[\rm (i)]
\item\label{T3.5-1}
$\bar x\in \text{\rm Sol}(P)$ if and only if there exist $\al_i\ge 0$ and $x_i^*\in \partial \mathcal{T}^{F_i}_{\Omega_i}(\bx)$ $(i=1,\ldots,n)$ such that
\begin{gather*}
\|(\al_1,\ldots,\al_n)\|_{\psi}^*\le 1,\;
\sum_{i=1}^{n}\al_i\mathcal{T}^{F_i}_{\Omega_i}(\bx)=\|(\mathcal{T}^{F_1}_{\Omega_1}(\bx),\ldots,\mathcal{T}^{F_n}_{\Omega_n}(\bx))\|_{\psi},\;\sum_{i=1}^{n}\al_ix^*_i=0_{X^*}.
\end{gather*}	
If $\bx\notin\Omega_i$ for some $i\in\{1,\ldots,n\}$, then $\|(\al_1,\ldots,\al_n)\|^*_\psi=1$.
\item\label{T3.5-2}
Let $\bar x\in \text{\rm Sol}(P)$, $\al_i$ and $x_i^*$ $(i=1,\ldots,n)$ satisfy the conditions in assertion \eqref{T3.5-1}.
Then
\begin{gather*}
\text{\rm Sol}(P)=
\left\{x\in X\;\middle|\;\;
\begin{aligned}
& x_i^*\in\partial\mathcal{T}^{F_i}_{\Omega_i}(x)\;\;\text{for every}\;i\;\text{with}\;\al_i>0,\\ 
& \sum_{i=1}^n\al_i\mathcal{T}^{F_i}_{\Omega_i}(x)=\|(\mathcal{T}^{F_1}_{\Omega_1}(x),\ldots,\mathcal{T}^{F_n}_{\Omega_n}(x))\|_\psi
\end{aligned}
\right\}.
\end{gather*}
\end{enumerate}
\end{theorem}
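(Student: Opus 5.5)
The plan is to specialise Corollary~\ref{C3.5} to $f_i:=\mathcal{T}^{F_i}_{\Omega_i}$ $(i=1,\ldots,n)$ and $\Phi:=\|\cdot\|_\psi$ on $\R^n_+$. First I will verify that the standing hypotheses of Section~\ref{S3} hold. Each $f_i$ is finite-valued, nonnegative, convex and globally Lipschitz continuous, by the properties of minimal time functions recalled in Section~\ref{S2} under the standing assumption that $F_i$ is closed, bounded, convex and $0_X\in\operatorname{int}F_i$. The function $\Phi$ is convex, being the restriction of a norm (Proposition~\ref{P2.2}\eqref{P2.2-5}). It takes values in $\R_+$ and satisfies the monotonicity condition \eqref{mt} by Remark~\ref{R3.3}\eqref{R3.3-1}. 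Hence problem \eqref{P} with these data is exactly the problem of this section, and Corollary~\ref{C3.5} applies.

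With this in place, assertion~\eqref{T3.5-1} is Corollary~\ref{C3.5}\eqref{C3.5-1} rewritten with $f_i(\bx)=\mathcal{T}^{F_i}_{\Omega_i}(\bx)$ and $\partial f_i(\bx)=\partial\mathcal{T}^{F_i}_{\Omega_i}(\bx)$. For the last sentence of \eqref{T3.5-1}, recall that $\mathcal{T}^{F_i}_{\Omega_i}(\bx)=0$ if and only if $\bx\in\Omega_i$. So $\bx\notin\Omega_i$ gives $f_i(\bx)>0$, and the corollary then yields $\|(\al_1,\ldots,\al_n)\|^*_\psi=1$. Assertion~\eqref{T3.5-2} is obtained from Corollary~\ref{C3.5}\eqref{C3.5-2} by the same substitution.

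The only point I would check, since the corollary is stated without proof, is the translation of $(\al_1,\ldots,\al_n)\in\partial\Phi(f(x))$ into the stated norm conditions for $u:=f(x)\in\R^n_+$. For the restriction $\Phi$ of $\|\cdot\|_\psi$ to $\R^n_+$, together with $\al\in\R^n_+$, the claim is that $\al\in\partial\Phi(u)$ if and only if $\|\al\|^*_\psi\le1$ and $\langle\al,u\rangle=\|u\|_\psi$.

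For the sufficiency direction, take any $v\in\R^n_+$. Then $\langle\al,v\rangle\le\|\al\|^*_\psi\|v\|_\psi\le\|v\|_\psi$, and subtracting the equality $\langle\al,u\rangle=\|u\|_\psi$ gives the subgradient inequality.

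For the necessity direction, testing the subgradient inequality with $v=0$ and $v=2u$ gives $\langle\al,u\rangle=\|u\|_\psi$, which leaves $\langle\al,v\rangle\le\|v\|_\psi$ for all $v\in\R^n_+$. Since $\al\ge0$, replacing $v$ by $|v|$ extends this bound to all of $\R^n$. Here I use that $\|\cdot\|_\psi$ depends only on $|v_i|$, by \eqref{T2.2-1}. Hence $\|\al\|^*_\psi\le1$.

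Finally, if $u\ne0$, then $\|u\|_\psi=\langle\al,u\rangle\le\|\al\|^*_\psi\|u\|_\psi$, which forces $\|\al\|^*_\psi=1$. This is the one mildly delicate step, because $\Phi$ is defined only on the orthant and we need the reduction to absolute values. Everything else is direct substitution.
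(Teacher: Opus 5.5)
Your proposal is correct and follows the paper's route: the paper derives this theorem directly from Corollary~\ref{C3.5} with $f_i:=\mathcal{T}^{F_i}_{\Omega_i}$ and $\Phi:=\|\cdot\|_\psi$, using that $\mathcal{T}^{F_i}_{\Omega_i}(\bx)>0$ exactly when $\bx\notin\Omega_i$. Your extra check is sound and fills in a detail the paper leaves implicit when it cites \eqref{sn} and \eqref{T2.2-1}: for $\al\in\R^n_+$ and the orthant-restricted $\psi$-norm, $\al\in\partial\Phi(u)$ holds if and only if $\|\al\|^*_\psi\le 1$ and $\langle\al,u\rangle=\|u\|_\psi$, which you prove via the reduction $v\mapsto|v|$.
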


\begin{remark}
\begin{enumerate}[\rm (i)]
\item		
When $F_1=\cdots=F_n=\B_X$, Theorem~\ref{T3.5} reduces to the corresponding result in \cite{HieCuo26}. 
In addition, if $\Omega_1,\ldots,\Omega_n$ are singletons, it recovers \cite[Theorem~4.6]{Cuo26b}.
\item 
If $\bx\in\bigcap_{i=1}^n\Omega_i$, then
$\mathcal{T}^{F_i}_{\Omega_i}(\bx)=0$ $(i=1,\ldots,n)$.
In this case,
\begin{gather*}
\operatorname{Sol}(P)=\{x\in X\mid F(x)=0\}
=\bigcap_{i=1}^n\Omega_i.
\end{gather*}
Then $\bx\in\operatorname{Sol}(P)$.
By the Fermat rule, $0_{X^*}\in\partial\mathcal{T}^{F_i}_{\Omega_i}(\bx)$ $(i=1,\ldots,n)$.
The conditions in  assertion \eqref{T3.5-1} hold
with $x_1^*=\cdots=x_n^*:=0_{X^*}$ and any
$(\al_1,\ldots,\al_n)\in\R^n_+$ satisfying
$\|(\al_1,\ldots,\al_n)\|_\psi^*\le1$.
The solution set formula in
assertion \eqref{T3.5-2} remains valid.
\end{enumerate}
\end{remark}

The following example illustrates  Theorem~\ref{T3.5}.
\begin{example}\label{E3.7}
Let $(X,\|\cdot\|)$ be a normed space, $\operatorname{dim}X\ge 2$, and $\mathbf{u}\in X$ be a unit vector.
Let $\gamma_1,\gamma_2\in\R$ with $\gamma_1<\gamma_2$, and $r>0$ with $\gamma_2-\gamma_1>2r$. 
Define
$x_1:=\gamma_1\mathbf{u}$, $x_2:=\gamma_2\mathbf{u}$, and $\bar x:=\frac{x_1+x_2}{2}$.
Then there exists a $\mathbf{u}^*\in X^*$
such that $\|\mathbf{u}^*\|^*=1$ and $\langle\mathbf{u}^*,\mathbf{u}\rangle=0$.
Define
\begin{gather*}
F_i:=\left\{x\in X\mid\phi_i(x)\le 1\right\}\;\;\text{and}\;\;	\Omega_i:=x_i+rF_i
\end{gather*}
with
$\phi_i(x):=\max\{\|x\|,i|\langle\mathbf{u}^*,x\rangle|\}$ for all $x\in X$ $(i=1,2)$.
Note that $\phi_1,\phi_2$ are norms on $X$.
The following assertions hold.
\begin{enumerate}[\rm (i)]
\item\label{E4.4.1}
$F_i$ and $\Omega_i$ $(i=1,2)$
are bounded, closed and convex  with $0_X\in\operatorname{int}F_1\cap\operatorname{int}F_2$ and $\bar{x}\notin\Omega_1\cup\Omega_2$.
Moreover,  $\Omega_i=\{x\in X\mid\phi_i(x-x_i)\le r\}$ and  $\rho_{F_i}=\phi_i$ $(i=1,2)$.
\item\label{E4.4.2}  $\mathcal{T}^{F_i}_{\Omega_i}(\cdot)=[\phi_i(\cdot-x_i)-r]^+$, 
\begin{gather}
\partial\mathcal{T}^{F_i}_{\Omega_i}(\bx)
=\bigg\{x_i^*\in X^*\mid
\sigma_{F_i}(x_i^*)=1,\;\langle x_i^*,\bx-x_i\rangle=\phi_i(\bx-x_i)\bigg\},
\label{E3.7-7}
\end{gather}	
and $P^{F_i}_{\Omega_i}(x)\ne\emptyset$  for every $x\in X$ $(i=1,2)$.
\item\label{E4.4.3}
There exists an $x^*_1\in X^*$ such that $\|x^*_1\|^*=1$ and $\langle x^*_1,\mathbf{u}\rangle=1$.
Set $x_2^*:=-x_1^*$. 
Then $x^*_i\in\partial\mathcal{T}^{F_i}_{\Omega_i}(\bx)$ $(i=1,2)$.
\item\label{E4.4.4}
Let $\psi\in \pmb{\Psi}_2$ be permutation-symmetric, and  $\al:=\psi\left(\frac{1}{2},\frac{1}{2}\right)>0$.
Then $\|(\al,\al)\|^*_{\psi}=1$, and
\begin{gather*}
\al\mathcal{T}^{F_1}_{\Omega_1}(\bx)+\al\mathcal{T}^{F_2}_{\Omega_2}(\bx)
=\|(\mathcal{T}^{F_1}_{\Omega_1}(\bx),\mathcal{T}^{F_2}_{\Omega_2}(\bx))\|_{\psi}.
\end{gather*}	
\item\label{E4.4.5}
$\bar x\in \text{\rm Sol}(P)$ and
\begin{gather*}
\text{\rm Sol}(P)=
\left\{x\in X\;\middle|\;\;
\begin{aligned}
&x_1^*\in\partial\mathcal{T}^{F_1}_{\Omega_1}(x),\; x_2^*\in\partial\mathcal{T}^{F_2}_{\Omega_2}(x),\\ 
&\al\mathcal{T}^{F_1}_{\Omega_1}(x)+\al\mathcal{T}^{F_2}_{\Omega_2}(x)
=\|(\mathcal{T}^{F_1}_{\Omega_1}(x),\mathcal{T}^{F_2}_{\Omega_2}(x))\|_{\psi}
\end{aligned}
\right\}.
\end{gather*}
\end{enumerate}
\end{example}	

\begin{proof}[\normalsize Proof]\normalsize
Since $\operatorname{dim}X\ge 2$, there exists a vector $\mathbf{v}\in X\setminus\text{span}\{\mathbf{u}\}$. 
Then the vectors $\mathbf{u}$ and $\mathbf{v}$ are linearly independent.
Let $M:=\text{span}\{\mathbf{u},\mathbf{v}\}$.
The linear functional  $l:M\to\R$ defined by
\begin{gather*}
l(\xi\mathbf{u}+\eta\mathbf{v}):=\eta\;\;\text{for all } \xi,\eta\in\R
\end{gather*}
is continuous since $M$ is finite-dimensional. 
By the Hahn-Banach theorem \cite[Corollary~1.2]{Bre11}, $l$ admits an extension $v^*\in X^*$.
Since  $\langle v^*,\mathbf{v}\rangle=1$, we have $v^*\ne 0_{X^*}$.
Note that $\langle v^*,\mathbf{u}\rangle=0$.
Set $\mathbf{u}^*:=\frac{v^*}{\|v^*\|^*}$.
Then $\|\mathbf{u}^*\|^*=1$ and $\langle\mathbf{u}^*,\mathbf{u}\rangle=0$.
\begin{enumerate}[\rm (i)]
\item 
Fix $i\in\{1,2\}$.
Since $\phi_i$ is a norm, $F_i$ is convex.
Observe that $\|x\|\le \phi_i(x)\le i\|x\|$ for all  $x\in X$.
Thus, $\phi_i$ is continuous with respect to the norm $\|\cdot\|$, and consequently, $F_i$ is closed.
One can check that $\frac{1}{i}
\B_X\subset F_i \subset \B_X$.
Hence, $F_i$ is bounded and 
$0_X\in\operatorname{int}F_i$.
As a consequence, $\Omega_i$ is  bounded, closed and convex.
Since $\Omega_i\subset x_i+r\B_X$ and 
$\|\bx-x_i\|=\frac{\gamma_2-\gamma_1}{2}>r$, we have
$\bx\notin\Omega_i$.
On the other hand,
\begin{gather*}
x\in\Omega_i\Leftrightarrow \frac{x-x_i}{r}\in F_i\Leftrightarrow \phi_i\left(\frac{x-x_i}{r}\right)\le 1\Leftrightarrow \phi_i(x-x_i)\le r.
\end{gather*}	
Thus, $\Omega_i=\{x\in X\mid\phi_i(x-x_i)\le r\}$.
If $x=0_X$, then $\rho_{F_i}(x)=\phi_i(x)=0$.
Suppose that $x\ne 0_X$.
Then
$\rho_{F_i}(x)=\inf\{t>0\mid x\in tF_i\}$.
For every $t>0$, it is clear that $x\in tF_i$ if and only if $\phi_i(x)\le t$.
Hence, $\rho_{F_i}(x)=\phi_i(x)$.
\item
Let $i=1,2$ and
$x\in X$.
If $x\in\Omega_i$, then $\phi_i(x-x_i)\le r$, and consequently,
\begin{gather*}
\mathcal{T}^{F_i}_{\Omega_i}(x)=0=[\phi_i(x-x_i)-r]^+.
\end{gather*}
Suppose that $x\notin\Omega_i$.
Then $\phi_i(x-x_i)>r$.
Define
\begin{gather*} 
z_i:=x_i+\frac{r}{\phi_i(x-x_i)}(x-x_i).
\end{gather*}	
Then $z_i\in\Omega_i$ since $\phi_i(z_i-x_i)=r$.
Since $z_i-x=-\left(1-\frac{r}{\phi_i(x-x_i)}\right)(x-x_i)$,
we have
\begin{gather}\label{E3.7-11}
\phi_i(z_i-x)=\phi_i(x-x_i)-r.
\end{gather}	
By \eqref{mi},
\begin{gather}
\mathcal{T}^{F_i}_{\Omega_i}(x)=\inf_{\omega\in\Omega_i}\rho_{F_i}(\omega-x)=\inf_{\omega\in\Omega_i}\phi_i(\omega-x).\label{E3.7-10}
\end{gather}	
Thus,
$\mathcal{T}^{F_i}_{\Omega_i}(x)
\le \phi_i(z_i-x)\overset{\eqref{E3.7-11}}{=}\phi_i(x-x_i)-r$.
On the other hand, for any $\omega\in\Omega_i$,  the triangle inequality gives
\begin{align*}
\phi_i(\omega-x)
=\phi_i(x-\omega)
\ge \phi_i(x-x_i)-\phi_i(\omega-x_i)
\ge\phi_i(x-x_i)-r.
\end{align*}
By \eqref{E3.7-10}, $\mathcal{T}^{F_i}_{\Omega_i}(x)
\ge\phi_i(x-x_i)-r.$
Thus, 
\begin{gather*}
\mathcal{T}^{F_i}_{\Omega_i}(x)=\phi_i(x-x_i)-r=[\phi_i(x-x_i)-r]^+.
\end{gather*}	
Since $\phi_i(\bx-x_i)-r>0$, by \cite[Theorem 3.59]{MorNam22}, 
\begin{gather}\label{E3.7-8}
\partial \mathcal{T}^{F_i}_{\Omega_i}(\bx)=\partial\phi_i(\cdot-x_i)(\bx)=\partial\rho_{F_i}(\cdot-x_i)(\bx).
\end{gather}
Since $\bx\ne x_i$, it follows from  \eqref{L2.2-1} and \eqref{E3.7-8} that
\begin{gather*}
\partial\mathcal{T}^{F_i}_{\Omega_i}(\bx)
=\left\{x_i^*\in X^*\mid\sigma_{F_i}(x_i^*)=1,\;\langle x_i^*,\bx-x_i\rangle=\rho_{F_i}(\bx-x_i)\right\}.
\end{gather*}
Hence, condition \eqref{E3.7-7} is satisfied.
If $x\in\Omega_i$, then $x\in P^{F_i}_{\Omega_i}(x)$.
Suppose that $x\notin\Omega_i$.
Then
\begin{gather*}
\phi_i(z_i-x)=\mathcal{T}^{F_i}_{\Omega_i}(x)\Leftrightarrow\phi_i\left(\frac{z_i-x}{\mathcal{T}^{F_i}_{\Omega_i}(x)}\right)=1.
\end{gather*} 
Hence, $z_i-x\in\mathcal{T}^{F_i}_{\Omega_i}(x)F_i$, and consequently,
$z_i\in (x+\mathcal{T}^{F_i}_{\Omega_i}(x)F_i)\cap\Omega_i$.
 By \eqref{mp}, $P^{F_i}_{\Omega_i}(x)\ne\emptyset$.
\item
By the Hahn-Banach theorem \cite[Corollary~1.130]{MorNam22},  there exists $x^*_1\in X^*$ such that $\|x^*_1\|^*=1$ and $\langle x^*_1,\mathbf{u}\rangle=1$.
Set $x_2^*:=-x_1^*$. 
Then $x^*_1+x^*_2=0_{X^*}$, $\|x^*_2\|^*=1$ and $\langle x^*_2,-\mathbf{u}\rangle=1$.
Since $\mathbf{u}\in F_1$, $-\mathbf{u}\in F_2$, and $F_i\subset\B_X$, we have $\sigma_{F_i}(x^*_i)=1$ $(i=1,2)$.
Let $\gamma:=\frac{\gamma_2-\gamma_1}{2}>r$. Then
$\bar{x}-x_1=\gamma\mathbf{u}$ and $\bar{x}-x_2=-\gamma\mathbf{u}$.
Thus, $\langle x_i^*,\bx-x_i\rangle=\gamma$ and
\begin{gather}\label{E4.4-13}
\phi_i(\bx-x_i)=\max\{\|\bx-x_i\|,i|\langle \mathbf{u}^*,\bx-x_i\rangle|\}=\max\{\gamma,0\}
=\gamma,
\end{gather}
and consequently, $\langle x_i^*,\bx-x_i\rangle=\phi_i(\bx-x_i)$ $(i=1,2)$.
By \eqref{E3.7-7},  $x^*_i\in \partial\mathcal{T}^{F_i}_{\Omega_i}(\bx)$ $(i=1,2)$.
\item
By Proposition~\ref{P2.2}\eqref{P2.2-2} and \eqref{P2.2-6},
\begin{equation*}
\|(\al,\al)\|^*_{\psi}=\max_{(t_1,t_2)\in\Delta_2}\frac{t_1\cdot\al+t_2\cdot\al}{\psi(t_1,t_2)}=\frac{\al}{\displaystyle\min_{(t_1,t_2)\in\Delta_2}\psi(t_1,t_2)}=1.
\end{equation*}
By assertion \eqref{E4.4.2},
$\mathcal{T}^{F_i}_{\Omega_i}(\bx)=\phi_i(\bx-x_i)-r\overset{\eqref{E4.4-13}}{=}\gamma-r>0$ $(i=1,2)$.
Thus,
\begin{align*}
\|(\mathcal{T}^{F_1}_{\Omega_1}(\bx),\mathcal{T}^{F_2}_{\Omega_2}(\bx))\|_{\psi}
&=\|(\gamma-r,\gamma-r)\|_\psi\\
&\overset{\eqref{T2.2-1}}{=}2(\gamma-r)\psi\left(\frac{1}{2},\frac{1}{2}\right)
=\al\mathcal{T}^{F_1}_{\Omega_1}(\bx)+\al\mathcal{T}^{F_2}_{\Omega_2}(\bx).
\end{align*}	
\item 
The statement is a  consequence of
Theorem~\ref{T3.5}.
\end{enumerate}
The proof is complete.
\end{proof}	

\section{Fermat–Torricelli Problem}\label{S5}
We study  the case
$\Phi(u):=\|u\|_{\psi_1}$ for all $u\in\R^n_+$.
Then
\begin{gather*}
F(x)=\mathcal{T}^{F_1}_{\Omega_1}(x)+\cdots+\mathcal{T}^{F_n}_{\Omega_n}(x)\;\;\text{for all}\;\;x\in X.
\end{gather*}	

\begin{theorem}\label{T2.7}
Let $\bar{x}\in X$.
The following assertions hold.
\begin{enumerate}[\rm (i)]
\item\label{T2.7.1}
$\bar x\in\text{\rm Sol}(P)$ if and only if there exist 
$x_i^*\in\partial\mathcal{T}^{F_i}_{\Omega_i}(\bx)$ $(i=1,\ldots,n)$ such that
\begin{gather}\label{T3.15-12}
\displaystyle\sum_{i=1}^{n}x^*_i=0_{X^*}.
\end{gather}
\item\label{T2.7.2}
Let $\bar x\in\text{\rm Sol}(P)$, and $x_1^*,\ldots,x^*_n$ satisfy the conditions in assertion \eqref{T2.7.1}.
Then 
\begin{gather}\label{SF}
\text{\rm Sol}(P)=\bigcap_{i=1}^n \text{\rm \textbf{A}}(F_i,\Omega_i,x_i^*)
\end{gather}
with
\begin{gather}\label{A+}
\text{\rm\textbf{A}}(F_i,\Omega_i,x^*_i):=
\left\{x\in X\mid x^*_i\in\partial \mathcal{T}^{F_i}_{\Omega_i}(x)\right\}\;\;(i=1,\ldots,n).
\end{gather}
\end{enumerate}
\end{theorem}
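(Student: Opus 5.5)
We derive Theorem~\ref{T2.7} from Theorem~\ref{T3.5} with $\psi:=\psi_1$. The key preliminary fact is that $\psi_1\equiv1$ on $\Delta_n$. Hence Proposition~\ref{P2.2}\eqref{P2.2-6} (or Remark~\ref{R2} with $q=\infty$) gives
\[
\|(\al_1,\ldots,\al_n)\|^*_{\psi_1}=\max_{1\le i\le n}|\al_i|,
\]
and $\|u\|_{\psi_1}=\sum_{i=1}^n u_i$ for $u\in\R^n_+$. With nonnegative multipliers $\al_i\le1$, the complementarity condition of Theorem~\ref{T3.5}\eqref{T3.5-1} reads
\[
\sum_{i=1}^n(1-\al_i)\mathcal{T}^{F_i}_{\Omega_i}(\bx)=0,
\]
which forces $\al_i=1$ whenever $\bx\notin\Omega_i$.

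For sufficiency in \eqref{T2.7.1}, given $x_i^*\in\partial\mathcal{T}^{F_i}_{\Omega_i}(\bx)$ with $\sum_{i=1}^n x_i^*=0_{X^*}$, take $\al_1=\cdots=\al_n=1$. Then $\|(1,\ldots,1)\|^*_{\psi_1}=1$, the complementarity identity holds trivially, and Theorem~\ref{T3.5}\eqref{T3.5-1} gives $\bx\in\operatorname{Sol}(P)$. Alternatively, one can note directly that $0_{X^*}=\sum_{i=1}^n x_i^*\in\partial F(\bx)$ by the sum rule for subgradients.

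For necessity, Theorem~\ref{T3.5}\eqref{T3.5-1} supplies $\al_i\in[0,1]$ and $y_i^*\in\partial\mathcal{T}^{F_i}_{\Omega_i}(\bx)$ with $\sum_{i=1}^n\al_iy_i^*=0_{X^*}$, where $\al_i=1$ if $\bx\notin\Omega_i$. Set $x_i^*:=\al_iy_i^*$.
\begin{itemize}
\item If $\bx\notin\Omega_i$, then $x_i^*=y_i^*$, so $x_i^*\in\partial\mathcal{T}^{F_i}_{\Omega_i}(\bx)$.
\item If $\bx\in\Omega_i$, then by \eqref{sm} we have $y_i^*\in N_{\Omega_i}(\bx)\cap(-F_i^\circ)$. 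This set is a cone intersected with a convex set containing $0_{X^*}$, so it is closed under multiplication by $\al_i\in[0,1]$. Hence $x_i^*=\al_iy_i^*$ lies in the same subdifferential.
\end{itemize}
This rescaling step is the only real obstacle. Rather than relying on it, I would instead apply Theorem~\ref{T3.6}\eqref{T3.6-1} with $\Phi(u)=\sum_{i=1}^n u_i$ on $\R^n_+$. I would first check that $(1,\ldots,1)\in\partial\Phi(f(\bx))$ always. But $\partial\Phi(f(\bx))$ may contain other vectors when some $f_i(\bx)=0$, so the rescaling argument via \eqref{sm} is in any case the cleanest route and I commit to it.

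For \eqref{T2.7.2}, let $x_1^*,\ldots,x_n^*$ be as in \eqref{T2.7.1}; by the sufficiency argument they satisfy the conditions of Theorem~\ref{T3.5}\eqref{T3.5-1} with $\al_1=\cdots=\al_n=1$. Theorem~\ref{T3.5}\eqref{T3.5-2} then describes $\operatorname{Sol}(P)$ as the set of $x$ with $x_i^*\in\partial\mathcal{T}^{F_i}_{\Omega_i}(x)$ for all $i$ and
\[
\sum_{i=1}^n\mathcal{T}^{F_i}_{\Omega_i}(x)=\Big\|\big(\mathcal{T}^{F_1}_{\Omega_1}(x),\ldots,\mathcal{T}^{F_n}_{\Omega_n}(x)\big)\Big\|_{\psi_1}.
\]
The last equality holds for every $x$ because the minimal time functions are nonnegative, so $\operatorname{Sol}(P)=\bigcap_{i=1}^n\mathbf{A}(F_i,\Omega_i,x_i^*)$, which is \eqref{SF}.
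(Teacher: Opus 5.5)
Your proposal is correct and follows the paper's proof essentially step for step. You specialise Theorem~\ref{T3.5} to $\psi_1$, use $\sum_{i=1}^n(1-\al_i)\mathcal{T}^{F_i}_{\Omega_i}(\bx)=0$ to force $\al_i=1$ when $\bx\notin\Omega_i$, rescale $x_i^*:=\al_i y_i^*$, and take $\al_1=\cdots=\al_n=1$ for both the converse and assertion~(ii). The only cosmetic differences are these: for $\bx\in\Omega_i$ you justify $\al_i y_i^*\in\partial\mathcal{T}^{F_i}_{\Omega_i}(\bx)$ via \eqref{sm} (a cone intersected with a convex set containing $0_{X^*}$), whereas the paper notes $0_{X^*}\in\partial\mathcal{T}^{F_i}_{\Omega_i}(\bx)$ and uses convexity of the subdifferential; and your aside about Theorem~\ref{T3.6} is unnecessary.
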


\begin{proof}[\normalsize Proof]\normalsize
By Remark~\ref{R2}, $\|\cdot\|^*_{\psi_1}=\|\cdot\|_{\psi_\infty}$.
By Theorem~\ref{T3.5}\eqref{T3.5-1}, $\bx\in\operatorname{Sol}(P)$ if and only if
there exist
$\al_i\ge0$ and $z_i^*\in\partial\mathcal{T}^{F_i}_{\Omega_i}(\bx)$
$(i=1,\ldots,n)$ such that
\begin{gather}\label{T3.5-10}
\sum_{i=1}^n\al_iz_i^*=0_{X^*},\;\;
\max_{1\le i\le n}\al_i\le1,\;\;
\sum_{i=1}^n\al_i\mathcal{T}^{F_i}_{\Omega_i}(\bx)
=\sum_{i=1}^n\mathcal{T}^{F_i}_{\Omega_i}(\bx).
\end{gather}
The last equality in  \eqref{T3.5-10} gives
\begin{gather}\label{T3.5-11}
\sum_{i=1}^n(1-\al_i)\mathcal{T}^{F_i}_{\Omega_i}(\bx)=0.
\end{gather}
Set $x^*_i:=\al_i z^*_i$ $(i=1,\ldots,n)$.
Then condition \eqref{T3.15-12} is satisfied.
By \eqref{T3.5-11},  $\al_i=1$ whenever $\bx\notin\Omega_i$ $(i=1,\ldots,n)$.
Fix $i\in\{1,\ldots,n\}$.
If $\bx\notin\Omega_i$, then  
$x^*_i=\al_iz_i^*=z_i^*\in
\partial\mathcal{T}^{F_i}_{\Omega_i}(\bx).$
Suppose that $\bx\in\Omega_i$. 
Then $\mathcal{T}^{F_i}_{\Omega_i}(\bx)=0$, and consequently,
$0_{X^*}\in\partial\mathcal{T}^{F_i}_{\Omega_i}(\bx)$.
Since this subdifferential is convex and $0\le\al_i\le1$, we have
\begin{gather*}
x^*_i=\al_iz_i^*=\al_iz_i^*+(1-\al_i)0_{X^*}\in\partial \mathcal{T}^{F_i}_{\Omega_i}(\bx).
\end{gather*}	
Thus, $x^*_i\in \partial\mathcal{T}^{F_i}_{\Omega_i}(\bx)$ $(i=1,\ldots,n)$.
Conversely, if there exist $x_i^*\in\partial\mathcal{T}^{F_i}_{\Omega_i}(\bx)$ $(i=1,\ldots,n)$ satisfying \eqref{T3.15-12}, then condition
\eqref{T3.5-10} holds true with $\al_i=1$ and $x^*_i$ in place of $z^*_i$ $(i=1,\ldots,n)$.
This proves assertion \eqref{T2.7.1}. 
Assertion \eqref{T2.7.2} is a consequence of 
Theorem~\ref{T3.5}\eqref{T3.5-2} with $\al_1=\cdots=\al_n=1$.
\end{proof}

The next statement decomposes the set  \eqref{A+} into the sum of two sets described by the normal cone to the target set and the Minkowski functional of the dynamics set.
\begin{proposition}\label{P3.7}
Let $F, \Omega\subset X$ be nonempty, closed and convex,  $x^*\in X^*$ with $\sigma_F(-x^*)\le 1$.
Suppose that $F$ is bounded, $0_X\in\operatorname{int}F$, and $P^F_\Omega(x)\ne \emptyset$ for any $x\in X$.
Then
\begin{gather}\label{P3.7-0}
\text{\rm\textbf{A}}(F,\Omega,x^*)=\text{\rm H}(\Omega,x^*)+\text{\rm T}(F,x^*)
\end{gather}
with
$\text{\rm H}(\Omega,x^*):=\{x\in \Omega\mid x^*\in N_\Omega(x)\}$ and
$\text{\rm T}(F,x^*):=\{z\in X\mid\langle x^*,z\rangle=\rho_F(-z)\}$.
\end{proposition}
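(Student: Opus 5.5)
The plan is to prove the two inclusions in \eqref{P3.7-0} directly from the subdifferential formula \eqref{sm}, the description \eqref{L2.2-1} of $\partial\rho_F$, and Lemma~\ref{L2.7}. The hypothesis $P^F_\Omega(x)\ne\emptyset$ for all $x$ is used so that \eqref{sm} applies at every point outside $\Omega$, with an arbitrary projection point.

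\emph{Inclusion ``$\subset$''.} Let $x\in\text{\rm\textbf{A}}(F,\Omega,x^*)$, so $x^*\in\partial\mathcal{T}^F_\Omega(x)$. I split into two cases.
\begin{enumerate}[\rm (a)]
\item If $x\in\Omega$, then \eqref{sm} gives $x^*\in N_\Omega(x)$, so $x\in\text{\rm H}(\Omega,x^*)$. Moreover $0_X\in\text{\rm T}(F,x^*)$ because $\rho_F(0_X)=0$. Writing $x=x+0_X$ finishes this case.
\item If $x\notin\Omega$, pick $\omega\in P^F_\Omega(x)\subset\Omega$. By \eqref{sm}, $x^*\in N_\Omega(\omega)$, hence $\omega\in\text{\rm H}(\Omega,x^*)$. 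Also $-x^*\in\partial\rho_F(\omega-x)$. Set $z:=x-\omega$; then $-z=\omega-x\ne0_X$ since $\omega\in\Omega\not\ni x$. The first case of \eqref{L2.2-1} yields $\langle -x^*,-z\rangle=\rho_F(-z)$, i.e.\ $z\in\text{\rm T}(F,x^*)$. Thus $x=\omega+z$.
\end{enumerate}

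\emph{Inclusion ``$\supset$''.} Let $x=\omega+z$ with $\omega\in\text{\rm H}(\Omega,x^*)$ and $z\in\text{\rm T}(F,x^*)$. I split into three cases.
\begin{enumerate}[\rm (a)]
\item If $z=0_X$, then $x=\omega\in\Omega$ and $x^*\in N_\Omega(x)$. The standing hypothesis $\sigma_F(-x^*)\le1$ means $-x^*\in F^\circ$, so $x^*\in N_\Omega(x)\cap(-F^\circ)=\partial\mathcal{T}^F_\Omega(x)$ by \eqref{sm}.
\item If $x^*=0_{X^*}$, then $\rho_F(-z)=\langle x^*,z\rangle=0$, which forces $z=0_X$, and we are back in case (a).
\item If $x^*\ne0_{X^*}$ and $z\ne0_X$, the key step is to upgrade $\sigma_F(-x^*)\le1$ to $\sigma_F(-x^*)=1$. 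Since $\rho_F(-z)>0$, Lemma~\ref{L2.7}\eqref{L2.7-1} gives
\[
\rho_F(-z)=\langle -x^*,-z\rangle\le\sigma_F(-x^*)\rho_F(-z)\le\rho_F(-z),
\]
so $\sigma_F(-x^*)=1$. Hence $\langle x^*,z\rangle=\sigma_F(-x^*)\cdot\rho_F(-z)$, and Lemma~\ref{L2.7}\eqref{L2.7-2} (applied to $\omega\in\Omega$, $x^*\in N_\Omega(\omega)$) yields $x=\omega+z\notin\Omega$ and $\omega\in P^F_\Omega(x)$. Since $\omega-x=-z\ne0_X$, $\sigma_F(-x^*)=1$ and $\langle -x^*,\omega-x\rangle=\rho_F(\omega-x)$, formula \eqref{L2.2-1} gives $-x^*\in\partial\rho_F(\omega-x)$. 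Then \eqref{sm}, with the projection point $\omega$, gives $x^*\in N_\Omega(\omega)\cap(-\partial\rho_F(\omega-x))=\partial\mathcal{T}^F_\Omega(x)$.
\end{enumerate}

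The main obstacle is case (c) of the reverse inclusion. There one must recover both the normalisation $\sigma_F(-x^*)=1$ and the fact that $\omega$ is a minimal time projection of $x$; this is exactly what the equality argument above and Lemma~\ref{L2.7}\eqref{L2.7-2} are designed to supply.
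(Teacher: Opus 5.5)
Your proof is correct and follows essentially the same route as the paper. Both arguments prove the two inclusions via the two cases of \eqref{sm}, and in the case $z\ne0_X$ they use Lemma~\ref{L2.7}\eqref{L2.7-1} to force $\sigma_F(-x^*)=1$ and Lemma~\ref{L2.7}\eqref{L2.7-2} to get the projection property. Your separate case $x^*=0_{X^*}$ is redundant, since $\sigma_F(-x^*)=1$ already gives $x^*\ne0_{X^*}$, but it harmlessly makes explicit the nonzero hypothesis of Lemma~\ref{L2.7}\eqref{L2.7-2}, which the paper leaves implicit.
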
	

\begin{proof}[\normalsize Proof]\normalsize
Let $\bx\in \text{\rm\textbf{A}}(F,\Omega,x^*)$.
If $\bx\in\Omega$, then by the first formula in \eqref{sm}, $\bx\in \text{\rm H}(\Omega,x^*)$.
Since $0_X\in \text{\rm T}(F,x^*)$, we have
$\bx\in \text{\rm H}(\Omega,x^*)+\text{\rm T}(F,x^*)$.
Suppose  that $\bx\notin\Omega$.
By the second formula in \eqref{sm},
$x^*\in N_\Omega(x)$ and $\langle x^*,\bx-x\rangle=\rho_F(x-\bx)$ for every $x \in P^F_\Omega(\bx).$
Define $z:=\bx-x$ for some  $x \in P^F_\Omega(\bx)$. 
Then $z\in\text{\rm T}(F,x^*)$, and consequently, $\bx = x + z \in \text{\rm H}(\Omega,x^*)+\text{\rm T}(F,x^*)$.
	
Conversely, let 
$\bx := x + z$ for some $x \in \text{\rm H}(\Omega,x^*)$ and $z \in \text{\rm T}(F,x^*)$.
If $z=0_X$, then $\bx\in\Omega$ and $x^*\in N_\Omega(\bx)$.
Since $\sigma_F(-x^*)\le 1$, we have $x^*\in-F^\circ$.
By the first formula in \eqref{sm}, $\bx\in \text{\rm\textbf{A}}(F,\Omega,x^*)$.
Suppose that $z\ne 0_X$.
By Lemma~\ref{L2.7}\eqref{L2.7-1},
\begin{gather*}
0<\rho_{F}(-z)=\langle x^*,z\rangle	=\langle -x^*,-z\rangle	
\le\sigma_F(-x^*)\cdot\rho_F(-z)\le \rho_F(-z).
\end{gather*}	
Thus, $\sigma_F(-x^*)=1$.
Since $z\in\text{\rm T}(F,x^*)$, we have $\langle x^*,\bx-x\rangle=\rho_F(x-\bx)$.
By \eqref{L2.2-1}, $x^*\in-\partial\rho_F(x-\bx)$.
By Lemma~\ref{L2.7}\eqref{L2.7-2}, we have $\bx\notin\Omega$ and $x\in P^F_\Omega(\bx)$.
By the second formula in \eqref{sm}, $x^*\in\partial\mathcal{T}^F_\Omega(\bx)$, i.e.,
$\bx\in \text{\rm\textbf{A}}(F,\Omega,x^*)$.
\end{proof}

\begin{remark}\label{R4.6}
\begin{enumerate}[\rm (i)]
\item\label{R4.6-1}
When $F_1=\cdots=F_n=\B_X$, Theorem~\ref{T2.7} and Proposition~\ref{P3.7} recover the corresponding results in \cite{HieCuo26}. 
Moreover, if $\Omega_1,\ldots,\Omega_n$ are singletons, Theorem~\ref{T2.7} recaptures \cite[Theorem~5.1]{Cuo26b}.
\item 
When $F_1=\cdots=F_n$, assertion~\eqref{T2.7.1} recovers \cite[Theorem~4.3]{MorNam11+}. 
In addition, if they are singletons, it reduces to \cite[Theorem~5.4(iii)]{NamZal13} when the constraint set equals the whole space.
\end{enumerate}
\end{remark}	

The following examples illustrate Theorem~\ref{T2.7}.
\begin{example}\label{E4.4}
Let $X:=\R^2$ be equipped with the norm $\|\cdot\|:=\|\cdot\|_{\psi_p}$ with $p\in[1,\infty]$.
The dual norm is $\|\cdot\|^*=\|\cdot\|_{\psi_q}$ with $q\in[1,\infty]$ satisfying $\frac{1}{p}+\frac{1}{q}=1$.
Define 
\begin{gather}\label{E4.4-1}
\mathbf{u}:=(1,0),\;\mathbf{u}^*:=(0,1),\;
x_1:=-\mathbf{u},\;x_2:=3\mathbf{u},\;\bar x:=\mathbf{u},\\  \label{E4.4-2}
F_i:=\left\{(u_1,u_2)\mid \phi_i(u_1,u_2)\le 1\right\},\;
\Omega_i:=x_i+F_i,\\ \label{E4.4-0}
\phi_i(u_1,u_2)=\max\{\|(u_1,u_2)\|,i|u_2|\}\;\;\text{for all}\;\;(u_1,u_2)\in\R^2\;\;(i=1,2).
\end{gather}	
Then $\|\mathbf{u}\|=\|\mathbf{u}^*\|^*=1$ and $\langle \mathbf{u}^*,\mathbf{u}\rangle=0$.
The sets $F_i, \Omega_i$ $(i=1,2)$ are compact and convex with $0_{\R^2}\in\operatorname{int}F_1\cap\operatorname{int}F_2$ and $\bar{x}\notin\Omega_1\cup\Omega_2$.
By Example~\ref{E3.7}\eqref{E4.4.1}, $\rho_{F_i}=\phi_i$ $(i=1,2)$.
We have
\begin{gather}\label{E4.4-12}
F(x)=\mathcal{T}^{F_1}_{\Omega_1}(x)+\mathcal{T}^{F_2}_{\Omega_2}(x)
\end{gather}
for all $x\in\R^2.$ 
By Example~\ref{E3.7}\eqref{E4.4.5}, $\bar x\in\text{\rm Sol}(P)$. 
Define $x_1^*:=\mathbf{u}$ and $x_2^*:=-\mathbf{u}$.
Then $x^*_1+x^*_2=0_{\R^2}$,
$\|x^*_1\|^*=1$ and $\langle x^*_1,\mathbf{u}\rangle=1$.
By Example~\ref{E3.7}\eqref{E4.4.3}, 
$x_i^*\in\partial\mathcal{T}^{F_i}_{\Omega_i}(\bx)$ $(i=1,2)$.
\begin{enumerate}[\rm (i)]
\item\label{E4.4+1}
Let $p:=\infty$.
Then
\begin{gather}
F_1=[-1,1]\times[-1,1],\;\;\Omega_1=[-2,0]\times[-1,1],\label{E4.4-3}\\
F_2=[-1,1]\times[-1/2,1/2],\;\;\Omega_2=[2,4]\times[-1/2,1/2],
\label{E4.4-4}\\
\text{\rm H}(\Omega_1,x_1^*)=\{0\}\times[-1,1],\;\;\text{\rm H}(\Omega_2,x_2^*)=\{2\}\times[-1/2,1/2].\label{E4.4-5}
\end{gather}	

\begin{figure}[H]
\centering
\begin{tikzpicture}[scale=0.9]
\draw[gray!50,dashed] (-1.8,-1.8) grid (1.8,1.8);
		
\foreach \x in {0,1}
{
\draw (\x,0.05)--(\x,-0.05) node[anchor=north,xshift=4pt] at (\x,0) {\small $\x$};
}
		
\foreach \x in {-1}
{
\draw (\x,0.05)--(\x,-0.05) node[anchor=north east,xshift=4pt] at (\x,0) {\small $\x$};
}
\foreach \y in {-1}
\draw (0.05,\y)--(-0.05,\y) node[anchor=north east,xshift=4pt] {\small $\y$};
\foreach \y in {1}
\draw (0.05,\y)--(-0.05,\y) node[anchor=south east,xshift=4pt] {\small $\y$};

\draw[-{Stealth}] (-1.8,0)--(1.8,0) node[below right] {$x$};
\draw[-{Stealth}](0,-1.8)--(0,1.8) node[above left] {$y$};
		
\fill[red, fill opacity=0.4] (-1,-1) rectangle (1,1);
\node[red, above] at (0.5,1) {$F_1$};
\draw[red, thick] 	
(-1,-1) rectangle (1,1);
		
\fill[blue, fill opacity=0.4] (-1,-0.5) rectangle (1,0.5);
\node[blue, above] at (0.5,0.4) {$F_2$};
\draw[blue, thick] 	
(-1,-0.5) rectangle (1,0.5);

\begin{scope}[shift={(5.5,0)}]
\draw[gray!50,dashed] (-2.8,-1.8) grid (4.8,1.8);

\foreach \x in {0,1,2,3,4}
{
\draw (\x,0.05)--(\x,-0.05) node[anchor=north,xshift=4pt] at (\x,0) {\small $\x$};
}

\foreach \x in {-1}
{
\draw (\x,0.05)--(\x,-0.05) node[anchor=north east,xshift=4pt] at (\x,0) {\small $\x$};
}
\foreach \y in {-1}
\draw (0.05,\y)--(-0.05,\y) node[anchor=north east,xshift=4pt] {\small $\y$};
\foreach \y in {1}
\draw (0.05,\y)--(-0.05,\y) node[anchor=south east,xshift=4pt] {\small $\y$};

\draw[-{Stealth}] (-2.8,0)--(4.8,0) node[below right] {$x$};
\draw[-{Stealth}](0,-1.8)--(0,1.8) node[above left] {$y$};

\fill[red, fill opacity=0.4] (-2,-1) rectangle (0,1);
\node[red, above] at (-1,1) {$\Omega_1$};
\draw[red, thick] 	
(-2,-1) rectangle (0,1);

\fill[blue, fill opacity=0.4] (2,-0.5) rectangle (4,0.5);
\node[blue, above] at (3,0.4) {$\Omega_2$};
\draw[blue, thick] 	
(2,-0.5) rectangle (4,0.5);

\draw[red, line width=2pt] 
(0,1.02)--(0,-1.02);
\node[red, font=\small] at (0.8,0.5) {$\text{\rm H}(\Omega_1,x_1^*)$};

\draw[blue, line width=2pt] 	
(2,0.52)--(2,-0.52);
\node[blue, font=\small] at (2,-0.8) {$\text{\rm H}(\Omega_2,x_2^*)$};
\end{scope}
\end{tikzpicture}
\caption
{Example \ref{E4.4}\eqref{E4.4+1}: Sets \eqref{E4.4-3}--\eqref{E4.4-5}}
\label{fig1}
\end{figure}
\noindent
We have
\begin{gather}
\text{\rm T}(F_1,x_1^*)=\{(u_1,u_2)\mid u_1\ge|u_2|\},\;\text{\rm T}(F_2,x_2^*)=\{(u_1,u_2)\mid u_1\le -2|u_2|\}.\label{E4.4-6}
\end{gather}	
By \eqref{P3.7-0},
\begin{gather}
\text{\rm \textbf{A}}(F_1,\Omega_1,x_1^*)
=\{(u_1,u_2)\mid u_1\ge 0,\;|u_2|\le 1+u_1\},\label{E4.4-8}\\
\text{\rm \textbf{A}}(F_2,\Omega_2,x_2^*)
=\{(u_1,u_2)\mid u_1\le 2,\;2|u_2|\le 3-u_1\}.\label{E4.4-9}
\end{gather}	
By \eqref{SF}, 
\begin{gather}\label{E4.4-10}
\text{\rm Sol}(P)
=\left\{(u_1,u_2)\mid 0\le u_1\le 2,\;|u_2|\le\min\left\{u_1+1,\frac{3-u_1}{2}\right\}\right\},
\end{gather}
which is the hexagon with vertices 
\begin{gather*}
(0,1),\;\left(\frac{1}{3},\frac{4}{3}\right),\;\left(2,\frac{1}{2}\right),\;\left(2,-\frac{1}{2}\right),\;\left(\frac{1}{3},-\frac{4}{3}\right),\;(0,-1).
\end{gather*}	
\begin{figure}[H]
\centering
\begin{tikzpicture}[scale=0.9]
\begin{scope}
\draw[step=1cm,gray!50,dashed] (-3.5,-2.5) grid (2.5,2.5);
			
\foreach \x in {1,2}{
\draw (\x,0.05)--(\x,-0.05) node[anchor=north,xshift=4pt] at (\x,0) {\small $\x$};}

\foreach \x in {-3,-2,-1}
{\draw (\x,0.05)--(\x,-0.05) node[anchor=north east,xshift=4pt] at (\x,0) {\small $\x$};}

\foreach \y in {-2,-1,1,2}
\draw (0.05,\y)--(-0.05,\y) node[left] {\small $\y$};

\node[below] at (0.15,-0.1) {$0$};	
\draw[-{Stealth}](-3.5,0)--(2.5,0) node[below right] {$x$};
\draw[-{Stealth}](0,-2.5)--(0,2.5) node[above left] {$y$};
			
\clip (-3.5,-2.5) rectangle (2.5,2.5);
			
\fill[red, fill opacity=0.4]
(4,4)--(0,0)--(4,-4);
\draw[red, thick] (4,4)--(0,0)--(4,-4);
\node[red, font=\small] at (1.5,0.5) {$\text{\rm T}(F_1,x_1^*)$};
			
\fill[blue, fill opacity=0.4]
(-6,3)--(0,0)--(-6,-3);
\draw[blue, thick] (-6,3)--(0,0)--(-6,-3);
\node[blue, font=\small] at (-2.5,0.5) {$\text{\rm T}(F_2,x_2^*)$};
			
\fill(0,0) circle (1pt);
\end{scope}

\begin{scope}[shift={(6,0)}]
\draw[step=1cm,gray!50,dashed] (-2.5,-2.5) grid (3.5,2.5);

\foreach \x in {1,2,3}{
\draw (\x,0.05)--(\x,-0.05) node[anchor=north,xshift=4pt] at (\x,0) {\small $\x$};}

\foreach \x in {-2,-1}
{\draw (\x,0.05)--(\x,-0.05) node[anchor=north east,xshift=4pt] at (\x,0) {\small $\x$};}

\foreach \x in {0}
{\draw (\x,0.05)--(\x,-0.05) node[anchor=north east,xshift=3pt] at (\x,0) {\small $\x$};}

\foreach \y in {-2,-1,1,2}
\draw (0.05,\y)--(-0.05,\y) node[left] {\small $\y$};
\draw[-{Stealth}](-2.5,0)--(3.5,0) node[below right] {$x$};
\draw[-{Stealth}](0,-2.5)--(0,2.5) node[above left] {$y$};

\clip (-2.5,-2.5) rectangle (3.5,2.5);

\fill[red, fill opacity=0.4]
(4,3)--(2,3)--(0,1)--(0,-1)--(2,-3)--(4,-3);
\draw[red, thick] (4,3)--(2,3)--(0,1)--(0,-1)--(2,-3)--(4,-3);
\node[red, font=\small] at (2,1.4) {$\text{\rm \textbf{A}}(F_1,\Omega_1,x_1^*)$};

\fill[blue, fill opacity=0.4]
(-3,3)--(2,0.5)--(2,-0.5)--(-3,-3);
\draw[blue, thick] (-3,3)--(2,0.5)--(2,-0.5)--(-3,-3);
\node[blue, font=\small] at (-1.4,1.4) {$\text{\rm \textbf{A}}(F_2,\Omega_2,x_2^*)$};

\fill(0,1) circle (0.5pt);
\fill(1/3,4/3) circle (0.5pt);
\fill(2,1/2) circle (0.5pt);
\fill(2,-1/2) circle (0.5pt);
\fill(1/3,-4/3) circle (0.5pt);
\fill(0,-1) circle (0.5pt);
\node[violet] at (1,0.5) {$\text{\rm\textbf{Sol}}(\textbf{\textit{P}})$};
\end{scope}
\end{tikzpicture}
\caption{Example \ref{E4.4}\eqref{E4.4+1}: Sets \eqref{E4.4-6}--\eqref{E4.4-10}
}
\label{fig2}
\end{figure}
\item\label{E4.4+2}
Let $p\in[1,\infty)$.
Then
\begin{gather*}
F_1=\{(u_1,u_2)\mid|u_1|^p+|u_2|^p\le 1\},\;\text{\rm T}(F_1,x_1^*)=\{(u,0)\mid u\ge 0\},\\
F_2=\{(u_1,u_2)\mid |u_1|^p+|u_2|^p\le 1,\;|u_2|\le 1/2\},\;\text{\rm T}(F_2,x_2^*)=\{(u,0)\mid u\le 0\}.
\end{gather*}	
\begin{figure}[H]
\centering
\begin{tikzpicture}[scale=0.9]
\draw[gray!50,dashed] (-1.8,-1.8) grid (1.8,1.8);
		
\foreach \x in {0,1}
{\draw (\x,0.05)--(\x,-0.05) node[anchor=north,xshift=4pt] at (\x,0) {\small $\x$};}
		
\foreach \x in {-1}
{\draw (\x,0.05)--(\x,-0.05) node[anchor=north east,xshift=4pt] at (\x,0) {\small $\x$};}
\foreach \y in {-1}
\draw (0.05,\y)--(-0.05,\y) node[anchor=north east,xshift=4pt] {\small $\y$};
\foreach \y in {1}
\draw (0.05,\y)--(-0.05,\y) node[anchor=south east,xshift=4pt] {\small $\y$};
		
\draw[-{Stealth}] (-1.8,0)--(1.8,0) node[below right] {$x$};
\draw[-{Stealth}](0,-1.8)--(0,1.8) node[above left] {$y$};
		
\node[red, above] at (0.5,1) {$F_1$};
		
\fill[red, fill opacity=0.4]
plot[domain=-1:1,samples=200]
(\x,{(1-abs(\x)^3)^(1/3)})--
plot[domain=1:-1,samples=200]
(\x,{-(1-abs(\x)^3)^(1/3)})--cycle;
		
\draw[red, thick]
plot[domain=-1:1,samples=200]
(\x,{(1-abs(\x)^3)^(1/3)})--
plot[domain=1:-1,samples=200]
(\x,{-(1-abs(\x)^3)^(1/3)})--cycle;
		
\node[blue, above] at (0.5,0.4) {$F_2$};
		
\fill[blue, fill opacity=0.4]
plot[domain=-0.5:0.5,samples=200]
({(1-abs(\x)^3)^(1/3)},\x)--
plot[domain=0.5:-0.5,samples=200]
({-(1-abs(\x)^3)^(1/3)},\x);
		
\draw[blue, thick]
(-{(7/8)^(1/3)},0.5)--({(7/8)^(1/3)},0.5)--
plot[domain=0.5:-0.5,samples=200]
({-(1-abs(\x)^3)^(1/3)},\x)--
({(7/8)^(1/3)},-0.5)--(-{(7/8)^(1/3)},-0.5)--
plot[domain=-0.5:0.5,samples=200]
({(1-abs(\x)^3)^(1/3)},\x)--cycle;
		
\begin{scope}[shift={(5.5,0)}]
\draw[gray!50,dashed] (-2.8,-1.8) grid (4.8,1.8);
			
\foreach \x in {0,1,2,3,4}
{\draw (\x,0.05)--(\x,-0.05) node[anchor=north,xshift=4pt] at (\x,0) {\small $\x$};}
			
\foreach \x in {-2,-1}
{
\draw (\x,0.05)--(\x,-0.05) node[anchor=north east,xshift=4pt] at (\x,0) {\small $\x$};}
			
\foreach \y in {-1}
\draw (0.05,\y)--(-0.05,\y) node[anchor=north east,xshift=4pt] {\small $\y$};
\foreach \y in {1}
\draw (0.05,\y)--(-0.05,\y) node[anchor=south east,xshift=4pt] {\small $\y$};
			
\draw[-{Stealth}] (-2.8,0)--(4.8,0) node[below right] {$x$};
\draw[-{Stealth}](0,-1.8)--(0,1.8) node[above left] {$y$};
			
\node[red, above] at (-1,1) {$\Omega_1$};
\begin{scope}[shift={(-1,0)}]
\fill[red, fill opacity=0.4]
plot[domain=-1:1,samples=200]
(\x,{(1-abs(\x)^3)^(1/3)})--
plot[domain=1:-1,samples=200]
(\x,{-(1-abs(\x)^3)^(1/3)})--cycle;
				
\draw[red, thick]
plot[domain=-1:1,samples=200]
(\x,{(1-abs(\x)^3)^(1/3)})--
plot[domain=1:-1,samples=200]
(\x,{-(1-abs(\x)^3)^(1/3)})--cycle;
\end{scope}

\node[blue, above] at (3,0.4) {$\Omega_2$};
\begin{scope}[shift={(3,0)}]
\fill[blue, fill opacity=0.4]
plot[domain=-0.5:0.5,samples=200]
({(1-abs(\x)^3)^(1/3)},\x)--
plot[domain=0.5:-0.5,samples=200]
({-(1-abs(\x)^3)^(1/3)},\x);
				
\draw[blue, thick]
(-{(7/8)^(1/3)},0.5)--({(7/8)^(1/3)},0.5)--
plot[domain=0.5:-0.5,samples=200]
({-(1-abs(\x)^3)^(1/3)},\x)--
({(7/8)^(1/3)},-0.5)--(-{(7/8)^(1/3)},-0.5)--
plot[domain=-0.5:0.5,samples=200]
({(1-abs(\x)^3)^(1/3)},\x)--cycle;
\end{scope}
\end{scope}
\end{tikzpicture}
\caption{Example \ref{E4.4}\eqref{E4.4+2}: Sets $F_1, F_2$ and $\Omega_1, \Omega_2$ with $p=3$}
\end{figure}
\noindent
We have
\begin{gather*}
\Omega_1=\{(u_1,u_2)\mid |u_1+1|^p+|u_2|^p\le 1\},\;\text{\rm H}(\Omega_1,x_1^*)=\{(0,0)\},\\
\Omega_2=\{(u_1,u_2)\mid|u_1-3|^p+|u_2|^p\le 1,\;|u_2|\le 1/2\},\;\text{\rm H}(\Omega_2,x_2^*)=\{(2,0)\}.
\end{gather*}
\begin{figure}[H]
\centering
\begin{tikzpicture}[scale=0.9]
\draw[step=1cm,gray!50,dashed] (-2.5,-1.8) grid (4.5,1.8);

\foreach \x in {-1,0,1,2}{
\draw (\x,0.05)--(\x,-0.05) node[anchor=north,xshift=4pt] at (\x,0) {\small $\x$};}
\foreach \y in {-1}
\draw (0.05,\y)--(-0.05,\y) node[anchor=north east,xshift=4pt] {\small $\y$};
\foreach \y in {1}
\draw (0.05,\y)--(-0.05,\y) node[anchor=south east,xshift=4pt] {\small $\y$};
\draw[-{Stealth}](-2.5,0)--(4.5,0) node[below right] {$x$};
\draw[-{Stealth}](0,-1.8)--(0,1.8) node[above left] {$y$};

\draw[-{Stealth}, red, very thick] (0,0)--(4.5,0);
\node[red, font=\small] at (3.5,-0.4) {$\text{\rm T}(F_1,x_1^*)$};

\draw[blue, very thick] (-2.5,0)--(0,0);
\node[blue, font=\small] at (-1.8,-0.4) {$\text{\rm T}(F_2,x_2^*)$};

\fill[red](0,0) circle (3pt);
\node[red, font=\small] at (0.8,0.3) {$\text{\rm H}(\Omega_1,x_1^*)$};

\fill[blue](2,0) circle (3pt);
\node[blue, font=\small] at (2.8,0.3) {$\text{\rm H}(\Omega_2,x_2^*)$};

\begin{scope}[shift={(6.8,0)}]
\draw[step=1cm,gray!50,dashed] (-1.5,-1.8) grid (3.5,1.8);

\foreach \x in {-1,0,2}{
\draw (\x,0.05)--(\x,-0.05) node[anchor=north,xshift=4pt] at (\x,0) {\small $\x$};}
\foreach \y in {-1}
\draw (0.05,\y)--(-0.05,\y) node[anchor=north east,xshift=4pt] {\small $\y$};
\foreach \y in {1}
\draw (0.05,\y)--(-0.05,\y) node[anchor=south east,xshift=4pt] {\small $\y$};
\draw[-{Stealth}](-1.5,0)--(3.5,0) node[below right] {$x$};
\draw[-{Stealth}](0,-1.8)--(0,1.8) node[above left] {$y$};

\draw[-{Stealth}, red, very thick] (0,0)--(3.5,0);
\node[red, font=\small] at (2.4,0.3)
{$\text{\rm\textbf{A}}(F_1,\Omega_1,x_1^*)$};

\draw[blue, very thick] (-1.5,0)--(2,0);
\node[blue, font=\small] at (-0.5,0.3) {$\text{\rm \textbf{A}}(F_2,\Omega_2,x_2^*)$};

\draw[violet, very thick] (0,0)--(2,0);
\node[violet, font=\small] at (1,-0.4) {$\text{\rm Sol}(P)$};
\fill(0,0) circle (1pt);
\fill(2,0) circle (1pt);
\end{scope}
\end{tikzpicture}
\caption{Example \ref{E4.4}\eqref{E4.4+2}}
\end{figure}
\noindent
By \eqref{P3.7-0},
\begin{gather}\label{E4.4-11}
\text{\rm \textbf{A}}(F_1,\Omega_1,x_1^*)=\{(u,0)\mid u\ge 0\},\;\;\text{\rm \textbf{A}}(F_2,\Omega_2,x_2^*)=\{(u,0)\mid u\le 2\}.
\end{gather}
By \eqref{SF}, 
\begin{gather*}
\text{Sol}(P)=\text{\rm \textbf{A}}(F_1,\Omega_1,x_1^*)\cap\text{\rm \textbf{A}}(F_2,\Omega_2,x_2^*)=\{(u,0)\mid 0\le u\le 2\}.
\end{gather*}
\end{enumerate}
\end{example}

\begin{example}\label{E4.9}
Let $X:=L^2[0,1]$.
It is a Hilbert space with the inner product
\begin{gather*}
\langle x,y\rangle=\displaystyle\int_{0}^{1}x(t)y(t)dt\;\;\text{for all}\;\;x,y\in X.	
\end{gather*}	
Define 
\begin{gather}\label{E4.9-1}
\mathbf{u}(t):=\sqrt{3}t,\;\mathbf{u}^*(t):=2-3t\;\;\text{for all}\;\;t\in[0,1],\\ \label{E4.9-3}
x_1(t):=-t\;\;\text{for all}\;\;t\in[0,1],\;x_2:=-x_1,\;\bar x:=0_X,\\  \label{E4.9-2}
F_i:=\left\{z\in X\mid \phi_i(z)\le 1\right\},\;\Omega_i:=x_i+\frac{1}{2\sqrt 3}F_i,\\
\label{E4.9-4}
\phi_i(x)=\max\{\|x\|,i|\langle\mathbf{u}^*,x\rangle|\}\;\;\text{for all}\;\; x\in X\;\;(i=1,2).
\end{gather}	
Then $\|\mathbf{u}\|=\|\mathbf{u}^*\|^*=1$ and $\langle \mathbf{u}^*,\mathbf{u}\rangle=0$.
The sets  $\Omega_i,F_i$ $(i=1,2)$ are bounded, closed and convex with $0_{X}\in\operatorname{int}F_1\cap\operatorname{int}F_2$ and $\bar{x}\notin\Omega_1\cup\Omega_2$.
The objective function is of the form \eqref{E4.4-12} for all $x\in X$.
By Example~\ref{E3.7}\eqref{E4.4.5}, $\bar x\in\text{\rm Sol}(P)$. 
Let $x^*_1:=\mathbf{u}$ and $x^*_2:=-\mathbf{u}$.
Then $x^*_1+x^*_2=0_X$,
$\|x^*_1\|^*=1$ and $\langle x^*_1,\mathbf{u}\rangle=1$.
By Example~\ref{E3.7}\eqref{E4.4.3}, 
$x_i^*\in\partial \mathcal{T}^{F_i}_{\Omega_i}(\bx)$ $(i=1,2)$.
We have
\begin{gather*}
\mathrm H(\Omega_1,x_1^*)=\left\{-\frac{\mathbf{u}}{2\sqrt 3}\right\},\;\mathrm H(\Omega_2,x_2^*)=\left\{\frac{\mathbf{u}}{2\sqrt 3}\right\},\\	\mathrm T(F_1,x_1^*)=\{\lambda \mathbf{u}\mid \lambda\ge0\},\;
\mathrm T(F_2,x_2^*)=\{\lambda \mathbf{u}\mid \lambda\le0\}.
\end{gather*}	
By \eqref{P3.7-0},
\begin{gather}\label{E4.9-5}
\mathbf A(F_1,\Omega_1,x_1^*)
=\left\{\lambda \mathbf{u}\mid \lambda\ge-\frac{1}{2\sqrt 3}\right\}	
,\;
\mathbf A(F_2,\Omega_2,x_2^*)
=\left\{\lambda \mathbf{u}\mid \lambda\le\frac{1}{2\sqrt 3}\right\}.
\end{gather}
By \eqref{SF}, 
\begin{gather*}	
\text{Sol}(P)=\text{\rm \textbf{A}}(F_1,\Omega_1,x_1^*)\cap\text{\rm \textbf{A}}(F_2,\Omega_2,x_2^*)=\left\{\lambda\mathbf{u}\mid-\frac{1}{2\sqrt 3}\le\lambda\le\frac{1}{2\sqrt 3}\right\}.
\end{gather*}
\end{example}

\section{Chebyshev Centre Problem}\label{S6}
We study  the case
$\Phi(u):=\|u\|_{\psi_\infty}$ for all $u\in\R^n_+$.
Then
\begin{gather*}
F(x)=\max\left\{\mathcal{T}^{F_1}_{\Omega_1}(x),\ldots,\mathcal{T}^{F_n}_{\Omega_n}(x)\right\}\;\;\text{for all}\;\;x\in X.
\end{gather*}	
\begin{theorem}\label{T5.1}
Let $\bar{x}\in X$.
The following assertions hold.
\begin{enumerate}[\rm (i)]
\item\label{T5.1-1}
$\bar x\in \text{\rm Sol}(P)$ if and only if there exist $\al_i\ge 0$ and $x_i^*\in \partial\mathcal{T}^{F_i}_{\Omega_i}(\bx)$ $(i=1,\ldots,n)$ such that
\begin{gather}\label{T5.1-6}
\sum_{i=1}^{n}\al_ix^*_i=0_{X^*},\; 
\sum_{i=1}^{n}\al_i=1,\;
\left(\mathcal{T}^{F_i}_{\Omega_i}(\bx)-\max_{1\le k\le n}\mathcal{T}^{F_k}_{\Omega_k}(\bx)\right)\al_i=0\;\;(i=1,\ldots,n).
\end{gather}	
\item\label{T5.1-2}
Let $\bar x\in \text{\rm Sol}(P)$, $\al_i$ and $x_i^*$ $(i=1,\ldots,n)$ satisfy the conditions in assertion \eqref{T5.1-1}.
Then
\begin{gather}\label{SCC}
\text{\rm Sol}(P)=\bigcap_{\al_i>0}\text{\rm\textbf{B}}(F_i,\Omega_i,x_i^*),
\end{gather} 
where $\text{\rm \textbf{B}}(F_i,\Omega_i,x_i^*):=\text{\rm \textbf{A}}(F_i,\Omega_i,x_i^*)\cap \text{\rm\textbf{C}}(F_i,\Omega_i)$
with $\text{\rm \textbf{A}}(F_i,\Omega_i,x_i^*)$ given by \eqref{A+} and
\begin{gather}\label{T5.1-3}
\text{\rm\textbf{C}}(F_i,\Omega_i):=\left\{x\in X\bigm| \mathcal{T}^{F_i}_{\Omega_i}(x)=\max_{1\le k\le n}\mathcal{T}^{F_k}_{\Omega_k}(x)\right\}\;\;(i=1,\ldots,n).
\end{gather}
\end{enumerate}
\end{theorem}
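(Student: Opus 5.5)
We obtain Theorem~\ref{T5.1} by specialising Theorem~\ref{T3.5} to $\psi:=\psi_\infty$. By Remark~\ref{R2}, $\|\cdot\|^*_{\psi_\infty}=\|\cdot\|_{\psi_1}$, so the dual-norm condition in Theorem~\ref{T3.5}\eqref{T3.5-1} reads $\sum_{i=1}^n\al_i\le1$. Set $M:=\max_{1\le k\le n}\mathcal{T}^{F_k}_{\Omega_k}(\bx)$. The complementarity condition becomes $\sum_{i=1}^n\al_i\mathcal{T}^{F_i}_{\Omega_i}(\bx)=M$.

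For assertion \eqref{T5.1-1}, the necessity argument splits into two cases.

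\textbf{Case $M>0$.} Theorem~\ref{T3.5}\eqref{T3.5-1} already gives $\sum_{i=1}^n\al_i=1$. Then
\begin{gather*}
0=M-\sum_{i=1}^n\al_i\mathcal{T}^{F_i}_{\Omega_i}(\bx)=\sum_{i=1}^n\al_i\bigl(M-\mathcal{T}^{F_i}_{\Omega_i}(\bx)\bigr).
\end{gather*}
Each summand is nonnegative, so $\al_i\bigl(\mathcal{T}^{F_i}_{\Omega_i}(\bx)-M\bigr)=0$ for every $i$, which is \eqref{T5.1-6}.

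\textbf{Case $M=0$.} Here $\bx\in\bigcap_{i=1}^n\Omega_i$, so $0_{X^*}\in\partial\mathcal{T}^{F_i}_{\Omega_i}(\bx)$ for all $i$. We simply take $\al_1=1$, $\al_i=0$ for $i\ge2$, and $x_i^*:=0_{X^*}$; these satisfy \eqref{T5.1-6} trivially. This choice corresponds to the remark following Theorem~\ref{T3.5}.

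Conversely, suppose \eqref{T5.1-6} holds. Then $\|(\al_1,\ldots,\al_n)\|_{\psi_1}=1\le1$, and
\begin{gather*}
\sum_{i=1}^n\al_i\mathcal{T}^{F_i}_{\Omega_i}(\bx)=\sum_{i=1}^n\al_iM=M,
\end{gather*}
so the conditions of Theorem~\ref{T3.5}\eqref{T3.5-1} hold and $\bx\in\operatorname{Sol}(P)$.

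For assertion \eqref{T5.1-2}, Theorem~\ref{T3.5}\eqref{T3.5-2} applies directly to the multipliers in \eqref{T5.1-6}. It gives $\operatorname{Sol}(P)$ as the set of $x$ with $x_i^*\in\partial\mathcal{T}^{F_i}_{\Omega_i}(x)$ whenever $\al_i>0$, and with $\sum_{i=1}^n\al_i\mathcal{T}^{F_i}_{\Omega_i}(x)=\max_{1\le k\le n}\mathcal{T}^{F_k}_{\Omega_k}(x)$. Since $\sum_{i=1}^n\al_i=1$ and $\al_i\ge0$, the same convex-combination argument as above shows that this last equality holds if and only if $\mathcal{T}^{F_i}_{\Omega_i}(x)=\max_{1\le k\le n}\mathcal{T}^{F_k}_{\Omega_k}(x)$ for every $i$ with $\al_i>0$, that is, $x\in\mathbf C(F_i,\Omega_i)$. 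Combining this with the subgradient conditions gives exactly $\bigcap_{\al_i>0}\mathbf A(F_i,\Omega_i,x_i^*)\cap\mathbf C(F_i,\Omega_i)$, which is \eqref{SCC}. The index set $\{i:\al_i>0\}$ is nonempty because $\sum_{i=1}^n\al_i=1$.

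The only delicate point is the degenerate case $M=0$ in the necessity direction, where Theorem~\ref{T3.5} does not force $\sum_{i=1}^n\al_i=1$. This is why one must replace the multipliers as described rather than reuse those supplied by Theorem~\ref{T3.5}. Assertion \eqref{T5.1-2} requires no such adjustment, since it is stated for multipliers already satisfying \eqref{T5.1-6}.
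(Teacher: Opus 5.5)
Your proposal is correct and follows essentially the same route as the paper. Like the paper, you specialise Theorem~\ref{T3.5} via $\|\cdot\|^*_{\psi_\infty}=\|\cdot\|_{\psi_1}$ and split into two cases: $\bar x\notin\Omega_i$ for some $i$ (your $M>0$, where $\sum_i\alpha_i=1$ is forced and each nonnegative term $\alpha_i(M-\mathcal{T}^{F_i}_{\Omega_i}(\bar x))$ must vanish) and $\bar x\in\bigcap_{i=1}^n\Omega_i$ (where fresh multipliers with $x_i^*=0_{X^*}$ are chosen); in part (ii) you both use $\sum_i\alpha_i=1$ to turn the scalar equality into $x\in\mathbf{C}(F_i,\Omega_i)$ for every $i$ with $\alpha_i>0$, and your explicit check that $\{i\mid\alpha_i>0\}$ is nonempty is a small detail the paper leaves implicit.
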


\begin{proof}[\normalsize Proof]\normalsize
By Remark~\ref{R2}, $\|\cdot\|^*_{\psi_\infty}=\|\cdot\|_{\psi_1}$.
By Theorem~\ref{T3.5}\eqref{T3.5-1},
$\bx\in\operatorname{Sol}(P)$ if and only if
there exist
$\al_i\ge0$ and $x_i^*\in\partial\mathcal{T}^{F_i}_{\Omega_i}(\bx)$
$(i=1,\ldots,n)$ such that
\begin{gather}\label{T5.1-10}
\sum_{i=1}^n\al_ix_i^*=0_{X^*},\;\;
\sum_{i=1}^n\al_i\le1,\;\;
\sum_{i=1}^n\al_i\mathcal{T}^{F_i}_{\Omega_i}(\bx)
=\max_{1\le k\le n}\mathcal{T}^{F_k}_{\Omega_k}(\bx).
\end{gather}
If $\bx\notin\Omega_i$ for some $i=1,\ldots,n$, then
$\sum_{i=1}^n\al_i=1$.
The last equality in \eqref{T5.1-10} implies
\begin{gather*}
\sum_{i=1}^{n}\al_i\left(\mathcal{T}^{F_i}_{\Omega_i}(\bx)-\max_{1\le k\le n}\mathcal{T}^{F_k}_{\Omega_k}(\bx)\right)=0,
\end{gather*}	
and consequently, condition \eqref{T5.1-6}  holds true.
If $\bx\in\bigcap_{i=1}^n\Omega_i$, then $0_{X^*}\in \partial \mathcal{T}^{F_i}_{\Omega_i}(\bx)$ $(i=1,\ldots,n)$.
In this case, condition \eqref{T5.1-6} holds true with $x^*_i:=0_{X^*}$ and any $\al_i\ge 0$ $(i=1,\ldots,n)$ satisfying 
$\sum_{i=1}^n\al_i=1$.
It is clear that \eqref{T5.1-6} implies \eqref{T5.1-10}.
This proves assertion \eqref{T5.1-1}.

Let $\bx\in\operatorname{Sol}(P)$, $\al_i$ and $x_i^*$ $(i=1,\ldots,n)$ satisfy the conditions in assertion \eqref{T5.1-1}.
By Theorem~\ref{T3.5}\eqref{T3.5-2},
$x\in\operatorname{Sol}(P)$ if and only if
\begin{gather}\label{T5.1-4}
x_i^*\in\partial\mathcal{T}^{F_i}_{\Omega_i}(x)
\;\;\text{for every }i\text{ with }\al_i>0\;\text{and}\;
\sum_{i=1}^n\al_i\mathcal{T}^{F_i}_{\Omega_i}(x)
=\max_{1\le k\le n}\mathcal{T}^{F_k}_{\Omega_k}(x).
\end{gather}
Since $\sum_{i=1}^n\al_i=1$, the last equality is equivalent to
\begin{gather*}
\mathcal{T}^{F_i}_{\Omega_i}(x)
=\max_{1\le k\le n}\mathcal{T}^{F_k}_{\Omega_k}(x)
\;\text{for every }i\text{ with }\al_i>0.
\end{gather*}
This proves \eqref{T5.1-2}.
\end{proof}

\begin{remark}
\begin{enumerate}[\rm (i)]
\item
When $F_1=\cdots=F_n=\B_X$, Theorem~\ref{T5.1} strengthens the corresponding result in \cite{HieCuo26}. 
In addition, if $\Omega_1,\ldots,\Omega_n$ are singletons, it improves \cite[Theorem~6.1]{Cuo26b}.
\item
One can verify that $\bigcup_{i=1}^n\mathbf{C}(F_i,\Omega_i)=X$ and 
$\mathbf{C}(F_i,\Omega_i)$  $(i=1,\ldots,n)$ are closed.
\end{enumerate}
\end{remark}	

The next example computes the sets \eqref{T5.1-3} for  $p$-norms with $p\in[1,\infty]$.
\begin{example}\label{E5.3}
Let $X:=\R^2$ be equipped with the norm $\|\cdot\|:=\|\cdot\|_{\psi_p}$ for $p\in[1,\infty]$,  and   $F_i,\Omega_i$ $(i=1,2)$ be given by \eqref{E4.4-2}.
By Example~\ref{E3.7}\eqref{E4.4.2}, $\mathcal{T}^{F_i}_{\Omega_i}(x)=[\phi_i(x-x_i)-1]^+$ for all $x\in X$ $(i=1,2)$.
Then
\begin{align*}
\text{\rm\textbf{C}}(F_1,\Omega_1)
&=\left\{u\in\R^2\mid\mathcal{T}^{F_1}_{\Omega_1}(u)\ge \mathcal{T}^{F_2}_{\Omega_2}(u)\right\}\\
&=\left\{u\in\R^2\mid[\phi_1(u-x_1)-1]^+\ge
[\phi_2(u-x_2)-1]^+\right\}\\
&=\left\{u\in\R^2\mid\phi_1(u-x_1)\ge\phi_2(u-x_2)\right\}\cup\left\{u\in\R^2\mid\phi_2(u-x_2)\le 1\right\}\\
&=\left\{u\in\R^2\mid\phi_1(u-x_1)\ge\phi_2(u-x_2)\right\}\\
&=\left\{(u_1,u_2)\mid\|(u_1+1,u_2)\|\ge\max\{\|(u_1-3,u_2)\|,2|u_2|\}\right\}.
\end{align*}	
Similarly,
\begin{align*}
\text{\rm\textbf{C}}(F_2,\Omega_2)
&=\left\{u\in\R^2\mid\phi_2(u-x_2)\ge\phi_1(u-x_1)\right\}\\
&=\left\{(u_1,u_2)\mid\max\{\|(u_1-3,u_2)\|,2|u_2|\}\ge\|(u_1+1,u_2)\|\right\}.
\end{align*}
\begin{enumerate}[\rm (i)]
\item\label{E5.3-1}	
If $p:=\infty$, then
\begin{align*}
\text{\rm\textbf{C}}(F_1,\Omega_1)
&=\left\{(u_1,u_2)\mid\max\{|u_1+1|,|u_2|\}\ge\max\{|u_1-3|,2|u_2|\}\right\}\\
&=\left\{(u_1,u_2)\mid u_1\ge 1,\;|u_2|\le\frac{u_1+1}{2}\right\},  \\
\text{\rm\textbf{C}}(F_2,\Omega_2)
&=\left\{(u_1,u_2)\mid\max\{|u_1-3|,2|u_2|\}\ge\max\{|u_1+1|,|u_2|\}\right\}\\
&=\{(u_1,u_2)\mid u_1\le 1\}\cup
\left\{(u_1,u_2)\mid |u_2|\ge\frac{u_1+1}{2}\right\}. 
\end{align*}	
\begin{figure}[H]
\centering
\begin{tikzpicture}[scale=0.9]
\begin{scope}
\draw[gray!50,dashed] (-2.5,-2.5) grid (3.5,2.5);
			
\foreach \x in {0,1,2,3}{
\draw (\x,0.05)--(\x,-0.05) node[anchor=north,xshift=-4pt] at (\x,0) {\small $\x$};}

\foreach \x in {-2,-1}
{\draw (\x,0.05)--(\x,-0.05) node[anchor=north east,xshift=4pt] at (\x,0) {\small $\x$};}

\foreach \y in {-2,-1,1,2}
\draw (0.05,\y)--(-0.05,\y) node[left] {\small $\y$};

\draw[-{Stealth}] (-2.5,0)--(3.5,0) node[below right] {$x$};
\draw[-{Stealth}] (0,-2.5)--(0,2.5) node[above left] {$y$};

\node at (0.5,-3) {Example~\ref{E5.3}\eqref{E5.3-1}};	
			
\clip (-2.5,-2.5) rectangle (3.5,2.5);
			
\fill[red, fill opacity=0.4]
(5,3)--(1,1)--(1,-1)--(5,-3);
\draw[red] (5,3)--(1,1)--(1,-1)--(5,-3);
\node[red] at (2.5,0.5) {$\text{\rm\textbf{C}}(F_1,\Omega_1)$};	
			
\fill[blue, fill opacity=0.4]
(4,-3)--(-3,-3)--(-3,3)--(4,3)--(4,2.5)--(1,1)--(1,-1)--(4,-2.5)--(4,-3);
\draw[blue] (4,-3)--(-3,-3)--(-3,3)--(4,3)--(4,2.5)--(1,1)--(1,-1)--(4,-2.5)--(4,-3);
\node[blue] at (-1,0.5) {$\text{\rm\textbf{C}}(F_2,\Omega_2)$};		
\end{scope}

\begin{scope}[shift={(7,0)}]
\draw[gray!50,dashed] (-2.5,-2.5) grid (3.5,2.5);
	
\foreach \x in {0,1,2,3}{
\draw (\x,0.05)--(\x,-0.05) node[anchor=north,xshift=-4pt] at (\x,0) {\small $\x$};}
	
\foreach \x in {-2,-1}
{\draw (\x,0.05)--(\x,-0.05) node[anchor=north east,xshift=4pt] at (\x,0) {\small $\x$};}
	
\foreach \y in {-2,-1,1,2}
\draw (0.05,\y)--(-0.05,\y) node[left] {\small $\y$};
	
\draw[-{Stealth}] (-2.5,0)--(3.5,0) node[below right] {$x$};
\draw[-{Stealth}] (0,-2.5)--(0,2.5) node[above left] {$y$};

\node at (0.5,-3) {Example~\ref{E5.3}\eqref{E5.3-2} with $p=3/2$};	
	
\clip (-2.5,-2.5) rectangle (3.5,2.5);
	
\fill[red, fill opacity=0.4]
(5,{6/pow(pow(2,1.5)-1, 2/3)})--(1,{2/pow(pow(2,1.5)-1, 2/3)})--(1,-{2/pow(pow(2,1.5)-1, 2/3)})--(5,-{6/pow(pow(2,1.5)-1, 2/3)});
\draw[red] 
(5,{6/pow(pow(2,1.5)-1, 2/3)})--(1,{2/pow(pow(2,1.5)-1, 2/3)})--(1,-{2/pow(pow(2,1.5)-1, 2/3)})--(5,-{6/pow(pow(2,1.5)-1, 2/3)});
\node[red] at (2.5,0.5) {$\text{\rm\textbf{C}}(F_1,\Omega_1)$};	
	
\fill[blue, fill opacity=0.4]
(4,-3)--(-3,-3)--(-3,3)--(4,3)--(4,{5/pow(pow(2,1.5)-1, 2/3)})--(1,{2/pow(pow(2,1.5)-1, 2/3)})--(1,-{2/pow(pow(2,1.5)-1, 2/3)})--(4,-{5/pow(pow(2,1.5)-1, 2/3)})--(4,-3);
\draw[blue] (4,-3)--(-3,-3)--(-3,3)--(4,3)--(4,{5/pow(pow(2,1.5)-1, 2/3)})--(1,{2/pow(pow(2,1.5)-1, 2/3)})--(1,-{2/pow(pow(2,1.5)-1, 2/3)})--(4,-{5/pow(pow(2,1.5)-1, 2/3)})--(4,-3);
\node[blue] at (-1,0.5) {$\text{\rm\textbf{C}}(F_2,\Omega_2)$};		
\end{scope}
\end{tikzpicture}
\caption{Example~\ref{E5.3}}
\label{fig5}
\end{figure}
\item\label{E5.3-2}	
If $p\in[1,\infty)$, then 
\begin{align*}
\text{\rm\textbf{C}}(F_1,\Omega_1)
&=\left\{(u_1,u_2)\mid u_1\ge 1,\;|u_2|\le\frac{u_1+1}{(2^p-1)^{\frac{1}{p}}}
\right\},\\
\text{\rm\textbf{C}}(F_2,\Omega_2)
&=\{(u_1,u_2)\mid u_1\le 1\}\cup \left\{(u_1,u_2)\mid |u_2|\ge\frac{u_1+1}{(2^p-1)^{\frac{1}{p}}}
\right\}.
\end{align*}
\end{enumerate}
\end{example}	

The following examples illustrate Theorem~\ref{T5.1}.
\begin{example}\label{E5.4}
Let $X:=\R^2$ be equipped with the norm $\|\cdot\|:=\|\cdot\|_{\psi_p}$ with $p\in[1,\infty]$, $\mathbf{u},\mathbf{u}^*,x_1,x_2,\bx$ and $F_i,\Omega_i,\phi_i$ $(i=1,2)$ be given by \eqref{E4.4-1}--\eqref{E4.4-0}.
We have
\begin{gather}\label{E5.4-3}
F(x)=\max\left\{\mathcal{T}^{F_1}_{\Omega_1}(x),\mathcal{T}^{F_2}_{\Omega_2}(x)\right\}
\end{gather}
for all $x\in\R^2$.
By Example~\ref{E3.7}\eqref{E4.4.5}, $\bx\in\operatorname{Sol}(P)$.
Let $x^*_1:=\mathbf{u}$, $x^*_2:=-\mathbf{u}$, and $\al_1=\al_2=\frac{1}{2}$.
By Example~\ref{E4.4}, $x_i^*\in\partial\mathcal{T}^{F_i}_{\Omega_i}(\bx)$ $(i=1,2)$.
By Example~\ref{E3.7}\eqref{E4.4.2}, $\mathcal{T}^{F_i}_{\Omega_i}(\bx)=\phi_i(\bx-x_i)-1=1$ $(i=1,2)$. 
Thus, condition \eqref{T5.1-6} is satisfied.
\begin{enumerate}[\rm (i)]
\item\label{E5.4-1}
Let $p:=\infty$.
By \eqref{E4.4-8}, \eqref{E4.4-9}, and Example~\ref{E5.3}\eqref{E5.3-1},
\begin{align*}
\text{\rm \textbf{B}}(F_1,\Omega_1,x_1^*)
&=\text{\rm \textbf{A}}(F_1,\Omega_1,x_1^*)\cap \text{\rm\textbf{C}}(F_1,\Omega_1)
=\left\{(u_1,u_2)\mid u_1\ge 1,\;|u_2|\le\frac{1+u_1}{2}\right\},\\
\text{\rm \textbf{B}}(F_2,\Omega_2,x_2^*)
&=\text{\rm \textbf{A}}(F_2,\Omega_2,x_2^*)\cap \text{\rm\textbf{C}}(F_2,\Omega_2)
=\left\{(u_1,u_2)\mid u_1\le 1,\;|u_2|\le \frac{3-u_1}{2}\right\}.
\end{align*}	
By \eqref{SCC},
\begin{gather*}
 \text{\rm Sol}(P)=\text{\rm \textbf{B}}(F_1,\Omega_1,x_1^*)\cap \text{\rm \textbf{B}}(F_2,\Omega_2,x_2^*)=
\{(1,u)\mid |u|\le 1\}.
\end{gather*}
\begin{figure}[H]
\centering
\begin{tikzpicture}[scale=0.9]
\begin{scope}
\draw[gray!50,dashed] (-1.99,-2.5) grid (3.99,2.5);
			
\foreach \x in {0,1,2,3}{
\draw (\x,0.05)--(\x,-0.05) node[anchor=north,xshift=-4pt] at (\x,0) {\small $\x$};}
			
\foreach \x in {-1}
{\draw (\x,0.05)--(\x,-0.05) node[anchor=north east,xshift=4pt] at (\x,0) {\small $\x$};}
			
\foreach \y in {-2,-1,1,2}
\draw (0.05,\y)--(-0.05,\y) node[left] {\small $\y$};
			
\draw[-{Stealth}] (-2,0)--(4,0) node[below right] {$x$};
\draw[-{Stealth}] (0,-2.5)--(0,2.5) node[above left] {$y$};

\clip (-2,-2.5) rectangle (4,2.5);
			
\fill[red, thick, fill opacity=0.4]
(5,3)--(1,1)--(1,-1)--(5,-3);
\draw[red, thick] (5,3)--(1,1)--(1,-1)--(5,-3);
\node[red, font=\small] at (3,0.5) {$\text{\rm\textbf{B}}(F_1,\Omega_1, x_1^*)$};	
			
\fill[blue, thick, fill opacity=0.4]
(-5,4)--(1,1)--(1,-1)--(-5,-4);
\draw[blue, thick] (-5,4)--(1,1)--(1,-1)--(-5,-4);
\node[blue, font=\small] at (-1,0.5) {$\text{\rm\textbf{B}}(F_2,\Omega_2,x_2^*)$};		

\draw[violet, thick] (1,1)--(1,-1);
\fill(1,1) circle (1pt);
\fill(1,-1) circle (1pt);
\end{scope}
		
\begin{scope}[shift={(7,0)}]
\draw[gray!50,dashed] (-1.99,-2.5) grid (3.99,2.5);

\foreach \x in {0,1,2,3}{
\draw (\x,0.05)--(\x,-0.05) node[anchor=north,xshift=-4pt] at (\x,0) {\small $\x$};}

\foreach \x in {-1}
{\draw (\x,0.05)--(\x,-0.05) node[anchor=north east,xshift=4pt] at (\x,0) {\small $\x$};}

\foreach \y in {-2,-1,1,2}
\draw (0.05,\y)--(-0.05,\y) node[left] {\small $\y$};

\draw[-{Stealth}] (-2,0)--(4,0) node[below right] {$x$};
\draw[-{Stealth}] (0,-2.5)--(0,2.5) node[above left] {$y$};

\draw[-{Stealth}, red, very thick] (1,0)--(4,0);
\node[red, font=\small] at (3,0.4)
{$\text{\rm\textbf{B}}(F_1,\Omega_1,x_1^*)$};

\draw[blue, very thick] (-2,0)--(1,0);
\node[blue, font=\small] at (-1,0.4) {$\text{\rm \textbf{B}}(F_2,\Omega_2,x_2^*)$};

\fill(1,0) circle (2pt);
\node at (1.2,0.3) {$\bar{x}$};
\end{scope}
		
\end{tikzpicture}
\caption{Example~\ref{E5.4}\eqref{E5.4-1} and \eqref{E5.4-2}}
\label{fig6}
\end{figure}
\item\label{E5.4-2}
Let $p\in[1,\infty)$.
By \eqref{E4.4-11} and Example~\ref{E5.3}\eqref{E5.3-2},
\begin{align*}
\text{\rm \textbf{B}}(F_1,\Omega_1,x_1^*)
&=\text{\rm \textbf{A}}(F_1,\Omega_1,x_1^*)\cap \text{\rm\textbf{C}}(F_1,\Omega_1)=\left\{(u,0)\mid u\ge 1\right\},\\
\text{\rm \textbf{B}}(F_2,\Omega_2,x_2^*)&=\text{\rm \textbf{A}}(F_2,\Omega_2,x_2^*)\cap \text{\rm\textbf{C}}(F_2,\Omega_2)=\left\{(u,0)\mid u\le 1\right\}.
\end{align*}	
By \eqref{SCC}, 
$\text{\rm Sol}(P)=\text{\rm \textbf{B}}(F_1,\Omega_1,x_1^*)\cap \text{\rm \textbf{B}}(F_2,\Omega_2,x_2^*)=
\{\bx\}.$
\end{enumerate}
\end{example}	

\begin{example}
Let $X:=L^2[0,1]$ be the Hilbert space in Example~\ref{E4.9}.
Let $\mathbf{u},\mathbf{u}^*$,  $x_1,x_2,\bx$ and $F_i,\Omega_i,\phi_i$ $(i=1,2)$ be given by \eqref{E4.9-1}--\eqref{E4.9-4}.
The objective function is given by \eqref{E5.4-3} for all $x\in X$.
By Example~\ref{E3.7}\eqref{E4.4.5}, $\bar x\in\text{\rm Sol}(P)$. 
By Example~\ref{E3.7}\eqref{E4.4.2},
\begin{gather}\label{E4.13-1}
\mathcal{T}^{F_1}_{\Omega_1}(x)=\left[\|x+\mathbf{u}/\sqrt 3\|-\frac{1}{2\sqrt 3}\right]^+,\\ \label{E4.13-2}
\mathcal{T}^{F_2}_{\Omega_2}(x)=\left[\max\{\|x-\mathbf{u}/\sqrt 3\|,2|\langle\mathbf{u}^*,x\rangle|\}-\frac{1}{2\sqrt 3}\right]^+
\end{gather}	
for all $x\in X$.
In particular,
$\mathcal{T}^{F_1}_{\Omega_1}(\bx)=\mathcal{T}^{F_2}_{\Omega_2}(\bx)=\frac{1}{2\sqrt 3}$.
Let $x^*_1:=\mathbf{u}$, $x^*_2:=-\mathbf{u}$, and 
$\al_1=\al_2=\frac{1}{2}$.
By Example~\ref{E4.9}, $x_i^*\in\partial\mathcal{T}^{F_i}_{\Omega_i}(\bx)$ $(i=1,2)$. 
It is clear that condition \eqref{T5.1-6} is satisfied.
By \eqref{T5.1-3}, \eqref{E4.13-1} and \eqref{E4.13-2},
\begin{gather*}
\text{\rm\textbf{C}}(F_1,\Omega_1)=\{x\in X\mid\langle x,\mathbf{u}\rangle\ge 0,\;2|\langle\mathbf{u}^*,x\rangle|\le\|x+\mathbf{u}/\sqrt 3\| \},\\
\text{\rm\textbf{C}}(F_2,\Omega_2)=\{x\in X\mid \langle x,\mathbf{u}\rangle\le 0\}\cup\{x\in X\mid 2|\langle\mathbf{u}^*,x\rangle|\ge\|x+\mathbf{u}/\sqrt 3\| \}.
\end{gather*}	
The sets $\mathbf A(F_1,\Omega_1,x_1^*)$ and $\mathbf A(F_2,\Omega_2,x_2^*)$ are given by \eqref{E4.9-5}. 
Then
\begin{align*}
\text{\rm \textbf{B}}(F_1,\Omega_1,x_1^*)
&=\text{\rm \textbf{A}}(F_1,\Omega_1,x_1^*)\cap \text{\rm\textbf{C}}(F_1,\Omega_1)=\{\lambda\mathbf{u}\mid \lambda\ge 0\},\\
\text{\rm \textbf{B}}(F_2,\Omega_2,x_2^*)
&=\text{\rm \textbf{A}}(F_2,\Omega_2,x_2^*)\cap \text{\rm\textbf{C}}(F_2,\Omega_2)=\{\lambda\mathbf{u}\mid \lambda\le 0\}.
\end{align*}	
By \eqref{SCC}, 
$\text{\rm Sol}(P)=\text{\rm \textbf{B}}(F_1,\Omega_1,x_1^*)\cap \text{\rm \textbf{B}}(F_2,\Omega_2,x_2^*)=
\{\bx\}$.
\end{example}	

\section{$p$-Fermat--Torricelli Problem}\label{S7}
We study  the case
$\Phi(u):=\|u\|_{\psi_p}$ for all $u\in\R^n_+$ and some
$p\in(1,\infty)$.
Then
\begin{gather*}
F(x)=\left[\left(\mathcal{T}^{F_1}_{\Omega_1}(x)\right)^p+\cdots+\left(\mathcal{T}^{F_n}_{\Omega_n}(x)\right)^p\right]^{\frac{1}{p}}\;\;\text{for all}\;\;x\in X.
\end{gather*}	

\begin{theorem}\label{T3.16}
Let $\bx\in X$.
The following assertions hold.
\begin{enumerate}[\rm (i)]
\item\label{T3.16.1}
$\bx\in\operatorname{Sol}(P)$ if and only if there exist
$x_i^*\in\partial\mathcal{T}^{F_i}_{\Omega_i}(\bx)$ $(i=1,\ldots,n)$
such that
\begin{gather}\label{T3.16-4}
\sum_{i=1}^n
\left(\mathcal{T}^{F_i}_{\Omega_i}(\bx)\right)^{p-1}x_i^*
=0_{X^*}.
\end{gather}
\item\label{T3.16.2}
Let $\bx\in\operatorname{Sol}(P)$, and let
$x_1^*,\ldots,x_n^*$ satisfy the conditions in
assertion \eqref{T3.16.1}.
Then
\begin{gather}\label{T3.16-5}
\operatorname{Sol}(P)
=\bigcap_{i=1}^n\text{\rm\textbf{D}}(F_i,\Omega_i,x_i^*),
\end{gather}
where
\begin{gather*}
\text{\rm\textbf{D}}(F_i,\Omega_i,x_i^*)
:=
\begin{cases}
\text{\rm\textbf{A}}(F_i,\Omega_i,x_i^*)
\cap\text{\rm\textbf{E}}(F_i,\Omega_i,\bx)
&\text{if}\;\;\mathcal{T}^{F_i}_{\Omega_i}(\bx)>0,\\
\text{\rm\textbf{E}}(F_i,\Omega_i,\bx)
&\text{if}\;\;\mathcal{T}^{F_i}_{\Omega_i}(\bx)=0
\end{cases}
\end{gather*}
with $\text{\rm\textbf{A}}(F_i,\Omega_i,x_i^*)$
given by \eqref{A+} and
\begin{gather}\label{T6.1-1}
\text{\rm\textbf{E}}(F_i,\Omega_i,\bx)
:=\left\{x\in X\mid
\mathcal{T}^{F_i}_{\Omega_i}(x)=\mathcal{T}^{F_i}_{\Omega_i}(\bx)\right\}
\;\;(i=1,\ldots,n).
\end{gather}
\end{enumerate}
\end{theorem}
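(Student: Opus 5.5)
The plan is to specialise Theorem~\ref{T3.5} to $\psi=\psi_p$. By Remark~\ref{R2}, $\|\cdot\|^*_{\psi_p}=\|\cdot\|_{\psi_q}$ with $\frac1p+\frac1q=1$, $q\in(1,\infty)$. Write $t_i:=\mathcal{T}^{F_i}_{\Omega_i}(\bx)$ and $S:=F(\bx)=\|(t_1,\ldots,t_n)\|_{\psi_p}$. I would treat two cases, $S=0$ and $S>0$.

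\emph{Case $S=0$.} Then $\bx\in\bigcap_i\Omega_i$, so $\bx\in\operatorname{Sol}(P)$. Condition \eqref{T3.16-4} holds trivially, because $t_i^{p-1}=0$ when $p>1$; the choice $x_i^*:=0_{X^*}\in\partial\mathcal{T}^{F_i}_{\Omega_i}(\bx)$ works. For (ii), $\operatorname{Sol}(P)=\{x\mid F(x)=0\}=\bigcap_i\Omega_i$. Moreover every $\mathbf{D}(F_i,\Omega_i,x_i^*)=\mathbf{E}(F_i,\Omega_i,\bx)=\{x\mid \mathcal{T}^{F_i}_{\Omega_i}(x)=0\}=\Omega_i$, so \eqref{T3.16-5} holds.

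\emph{Case $S>0$, assertion (i).} By Theorem~\ref{T3.5}\eqref{T3.5-1}, $\bx\in\operatorname{Sol}(P)$ iff there are $\al\in\R^n_+$ with $\|\al\|_{\psi_q}\le1$ and $\sum\al_it_i=\|t\|_{\psi_p}$, together with $z_i^*\in\partial\mathcal{T}^{F_i}_{\Omega_i}(\bx)$ satisfying $\sum\al_iz_i^*=0_{X^*}$.

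The key step is the equality case of H\"older's inequality. From $\sum\al_it_i\le\|\al\|_q\|t\|_p\le\|t\|_p$ with equality throughout, and $t\ne0$, we get $\|\al\|_q=1$ and $\al_i^q$ proportional to $t_i^p$. This forces
\begin{gather*}
\al_i=\frac{t_i^{p-1}}{S^{p-1}}\quad(i=1,\ldots,n).
\end{gather*}
Substituting into $\sum\al_iz_i^*=0_{X^*}$ and multiplying by $S^{p-1}$ gives \eqref{T3.16-4} with $x_i^*:=z_i^*$.

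Conversely, given $x_i^*$ satisfying \eqref{T3.16-4}, define $\al_i$ by the formula above. A direct computation gives $\|\al\|_q=1$ and $\sum\al_it_i=S$, so Theorem~\ref{T3.5}\eqref{T3.5-1} applies.

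\emph{Case $S>0$, assertion (ii).} Use the same $\al$. Note that $\al_i>0$ iff $t_i>0$. Theorem~\ref{T3.5}\eqref{T3.5-2} gives
\begin{gather*}
\operatorname{Sol}(P)=\Big\{x\;\Big|\;x_i^*\in\partial\mathcal{T}^{F_i}_{\Omega_i}(x)\ \text{whenever } t_i>0,\ \ \textstyle\sum\al_i\mathcal{T}^{F_i}_{\Omega_i}(x)=F(x)\Big\}.
\end{gather*}
It therefore suffices to show that $\operatorname{Sol}(P)=\bigcap_i\mathbf{E}(F_i,\Omega_i,\bx)$, since then also $\operatorname{Sol}(P)\subset\mathbf{A}(F_i,\Omega_i,x_i^*)$ whenever $t_i>0$.

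For the inclusion $\supset$: if $\mathcal{T}^{F_i}_{\Omega_i}(x)=t_i$ for all $i$, then $F(x)=S$, so $x$ is a solution.

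For the inclusion $\subset$: let $x\in\operatorname{Sol}(P)$ and set $s:=(\mathcal{T}^{F_i}_{\Omega_i}(x))_i$. Then $\sum\al_is_i=\|s\|_p$ with $\|\al\|_q=1$, which is again the H\"older equality case. If $s\ne0$, it forces $s_i^p$ proportional to $\al_i^q$, which is proportional to $t_i^p$, so $s=\lambda t$ with $\lambda\ge0$. Since $F(x)=\lambda S=S$, we get $\lambda=1$, i.e.\ $x\in\bigcap_i\mathbf{E}(F_i,\Omega_i,\bx)$. The case $s=0$ cannot occur, because $F(x)=S>0$.

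The main obstacle is carefully justifying the H\"older equality characterisation for $\|\cdot\|_{\psi_p}$ and $\|\cdot\|_{\psi_q}$ on $\R^n_+$, including zero components; this is used twice, and everything else is bookkeeping.
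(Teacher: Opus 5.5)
Your proposal is correct and follows essentially the same route as the paper. You specialise Theorem~\ref{T3.5} using $\|\cdot\|^*_{\psi_p}=\|\cdot\|_{\psi_q}$, use the equality case of H\"older's inequality to force $\al_i=\bigl(\mathcal{T}^{F_i}_{\Omega_i}(\bx)/F(\bx)\bigr)^{p-1}$ when $F(\bx)>0$, apply the same equality case at an arbitrary solution $x$ to get $\mathcal{T}^{F_i}_{\Omega_i}(x)=\mathcal{T}^{F_i}_{\Omega_i}(\bx)$, and treat $F(\bx)=0$ separately via $\operatorname{Sol}(P)=\bigcap_{i=1}^n\Omega_i$; your remark that $\operatorname{Sol}(P)=\bigcap_{i=1}^n\mathbf{E}(F_i,\Omega_i,\bx)$, so the $\mathbf{A}$-constraints hold automatically, is also implicit in the paper's converse inclusion, which uses only the $\mathbf{E}$-sets.
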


\begin{proof}[\normalsize Proof]\normalsize
By Remark~\ref{R2}, $\|\cdot\|^*_{\psi_p}=\|\cdot\|_{\psi_q}$
with $q\in(1,\infty)$ satisfying $\frac1p+\frac1q=1$.
By Theorem~\ref{T3.5}\eqref{T3.5-1},
$\bx\in\operatorname{Sol}(P)$ if and only if there exist
$\al_i\ge0$ and $x_i^*\in\partial\mathcal{T}^{F_i}_{\Omega_i}(\bx)$
$(i=1,\ldots,n)$ such that
\begin{gather}\label{T6.1-10}
\sum_{i=1}^n\al_ix_i^*=0_{X^*},\;\;
\sum_{i=1}^n\al_i^q\le1,\;\;
\sum_{i=1}^n\al_i\mathcal{T}^{F_i}_{\Omega_i}(\bx)=F(\bx).
\end{gather}
If $F(\bx)>0$, then $\sum_{i=1}^n\al_i^q=1$.
The last equality in \eqref{T6.1-10} implies
\begin{gather}\label{T6.1-3}
\al_i=
\left(\frac{\mathcal{T}^{F_i}_{\Omega_i}(\bx)}{F(\bx)}\right)^{p-1}
\ge0
\;\;(i=1,\ldots,n).
\end{gather}
Substituting these multipliers into the first equality in
\eqref{T6.1-10} gives \eqref{T3.16-4}.
Conversely, \eqref{T3.16-4} implies \eqref{T6.1-10}
with the multipliers given by \eqref{T6.1-3}.
If $F(\bx)=0$, then $\bx\in\bigcap_{i=1}^n\Omega_i$
is a solution.
Thus, $0_{X^*}\in\partial\mathcal{T}^{F_i}_{\Omega_i}(\bx)$
$(i=1,\ldots,n)$, and consequently, \eqref{T3.16-4} holds with
$x_i^*:=0_{X^*}$ $(i=1,\ldots,n)$.
This proves \eqref{T3.16.1}.
	
Let $\bx\in\operatorname{Sol}(P)$ and 
$x_1^*,\ldots,x_n^*$ satisfy the conditions in
assertion \eqref{T3.16.1}.
Suppose that $F(\bx)>0$.
Let $\al_i$ $(i=1,\ldots,n)$ be given by \eqref{T6.1-3}.
Then $\al_i>0$ if and only if
$\mathcal{T}^{F_i}_{\Omega_i}(\bx)>0$ $(i=1,\ldots,n)$.
Let $x\in\operatorname{Sol}(P)$.
By Theorem~\ref{T3.5}\eqref{T3.5-2},
\begin{gather}\label{T5.1-7}
x_i^*\in\partial\mathcal{T}^{F_i}_{\Omega_i}(x)
\;\;\text{for every }i\text{ with }\al_i>0\;\;\text{and}\;\;
\sum_{i=1}^n\al_i\mathcal{T}^{F_i}_{\Omega_i}(x)=F(x).
\end{gather}
Since $F(x)=F(\bx)>0$, the equality in \eqref{T5.1-7} gives
\begin{gather}\label{T6.1.3}
\al_i=
\left(\frac{\mathcal{T}^{F_i}_{\Omega_i}(x)}{F(x)}\right)^{p-1}
\;\;(i=1,\ldots,n).
\end{gather}
By \eqref{T6.1-3} and \eqref{T6.1.3},
$\mathcal{T}^{F_i}_{\Omega_i}(x)=\mathcal{T}^{F_i}_{\Omega_i}(\bx)$
$(i=1,\ldots,n)$.
From this and the first condition in \eqref{T5.1-7},
$x\in \text{\rm\textbf{A}}(F_i,\Omega_i,x_i^*)
\cap\text{\rm\textbf{E}}(F_i,\Omega_i,\bx)$
if $\mathcal{T}^{F_i}_{\Omega_i}(\bx)>0$, and 
$x\in \text{\rm\textbf{E}}(F_i,\Omega_i,\bx)=\Omega_i$
if $\mathcal{T}^{F_i}_{\Omega_i}(\bx)=0$.
Thus,
$x\in\bigcap_{i=1}^n\text{\rm\textbf{D}}(F_i,\Omega_i,x_i^*)$.
Conversely, if \(x\) belongs to this intersection, then
$\mathcal{T}^{F_i}_{\Omega_i}(x)=\mathcal{T}^{F_i}_{\Omega_i}(\bx)$
$(i=1,\ldots,n)$, and hence $F(x)=F(\bx)$.
Thus, $x\in\operatorname{Sol}(P)$.
If $F(\bx)=0$, then $\mathcal{T}^{F_i}_{\Omega_i}(\bx)=0$ $(i=1,\ldots,n)$, and consequently,
\begin{gather*}
\text{\rm\textbf{D}}(F_i,\Omega_i,x_i^*)
=\text{\rm\textbf{E}}(F_i,\Omega_i,\bx)=\Omega_i
\;\;(i=1,\ldots,n).
\end{gather*}
Since $\operatorname{Sol}(P)=\bigcap_{i=1}^n\Omega_i$,
formula \eqref{T3.16-5} also holds in this case.
\end{proof}

\begin{remark}
When $F_1=\cdots=F_n=\B_X$, Theorem~\ref{T3.16} improves the corresponding result in \cite{HieCuo26}.
In addition, if $\Omega_1,\ldots
,\Omega_n$ are singletons, it strengthens \cite[Theorem~7.1]{Cuo26b}.
\end{remark}	

The following examples illustrate Theorem~\ref{T3.16}.
\begin{example}\label{E7.2}
Let $X:=\R^2$ be equipped with the norm $\|\cdot\|:=\|\cdot\|_{\psi_{p'}}$ with $p'\in[1,\infty]$, $\mathbf{u},\mathbf{u}^*,x_1,x_2,\bx$ and $F_i,\Omega_i,\phi_i$ $(i=1,2)$ be given by \eqref{E4.4-1}--\eqref{E4.4-0}.
By Example~\ref{E3.7}\eqref{E4.4.2}, $\mathcal{T}^{F_i}_{\Omega_i}(\cdot)=[\phi_i(\cdot-x_i)-1]^+$ $(i=1,2)$.
We have
\begin{gather}\label{E7.2-3}
F(x)=\left[\left(\mathcal{T}^{F_1}_{\Omega_1}(x)\right)^p+\left(\mathcal{T}^{F_2}_{\Omega_2}(x)\right)^p\right]^{\frac{1}{p}}
\end{gather}	
for all $x\in\R^2$.
By Example~\ref{E3.7}\eqref{E4.4.5}, we have $\bar x\in\text{\rm Sol}(P)$.
Let $x^*_1:=\mathbf{u}$ and $x^*_2:=-\mathbf{u}$.
By Example~\ref{E5.4}, $x_i^*\in\partial\mathcal{T}^{F_i}_{\Omega_i}(\bx)$ and $\mathcal{T}^{F_i}_{\Omega_i}(\bx)=1$ $(i=1,2)$.
It is clear that condition \eqref{T3.16-4} is satisfied.
\begin{enumerate}[\rm (i)]
\item\label{E7.2-1}
Let $p':=\infty$.
By \eqref{T6.1-1},
\begin{align*}
\text{\rm \textbf{E}}(F_1,\Omega_1,\bar x)&=\left\{u\in\R^2\mid \mathcal{T}^{F_1}_{\Omega_1}(u)=1\right\}
=\left\{(u_1,u_2)\mid\max\{|u_1+1|,|u_2|\}=2\right\}\\
&=\left\{(u_1,u_2)\mid |u_1+1|=2, |u_2|\le 2\right\}\cup\left\{(u_1,u_2)\mid |u_1+1|\le2, |u_2|=2\right\},\\
\text{\rm \textbf{E}}(F_2,\Omega_2,\bar x)&=\left\{u\in\R^2\mid \mathcal{T}^{F_2}_{\Omega_2}(u)=1\right\}
=\left\{(u_1,u_2)\mid\max\{|u_1-3|,2|u_2|\}=2\right\}\\
&=\left\{(u_1,u_2)\mid |u_1-3|=2, |u_2|\le 1\right\}\cup\left\{(u_1,u_2)\mid |u_1-3|\le2, |u_2|=1\right\}.
\end{align*}	
The sets $\text{\rm \textbf{A}}(F_1,\Omega_1,x_1^*)$ and $\text{\rm \textbf{A}}(F_2,\Omega_2,x_2^*)$
are given by \eqref{E4.4-8} and \eqref{E4.4-9}.
Then
\begin{align*}
\text{\rm \textbf{D}}(F_1,\Omega_1,x_1^*)
&=\text{\rm \textbf{A}}(F_1,\Omega_1,x_1^*)\cap \text{\rm\textbf{E}}(F_1,\Omega_1,\bar x)
=\{(1,u)\mid |u|\le 2\},\\
\text{\rm \textbf{D}}(F_2,\Omega_2,x_2^*)
&=\text{\rm \textbf{A}}(F_2,\Omega_2,x_2^*)\cap \text{\rm\textbf{E}}(F_2,\Omega_2,\bar x)
=\{(1,u)\mid |u|\le 1\}.
\end{align*}	
By \eqref{T3.16-5}, 
$\text{\rm Sol}(P)=\text{\rm \textbf{D}}(F_1,\Omega_1,x_1^*)\cap \text{\rm \textbf{D}}(F_2,\Omega_2,x_2^*)=\{(1,u)\mid |u|\le 1\}$.
\begin{figure}[H]
\centering
\begin{tikzpicture}[scale=0.8]
\begin{scope}
\draw[step=1cm,gray!50,dashed] (-3.5,-2.5) grid (3.5,2.5);

\foreach \x in {1,2}{
\draw (\x,0.05)--(\x,-0.05) node[anchor=north,xshift=4pt] at (\x,0) {\small $\x$};}

\foreach \x in {-2,-1}
{\draw (\x,0.05)--(\x,-0.05) node[anchor=north east,xshift=4pt] at (\x,0) {\small $\x$};}

\foreach \x in {0}
{\draw (\x,0.05)--(\x,-0.05) node[anchor=north east,xshift=3pt] at (\x,0) {\small $\x$};}

\foreach \y in {-1,1}
\draw (0.05,\y)--(-0.05,\y) node[left] {\small $\y$};
\draw[-{Stealth}](-3.5,0)--(3.5,0) node[below right] {$x$};
\draw[-{Stealth}](0,-2.5)--(0,2.5) node[above left] {$y$};

\clip (-3.5,-2.5) rectangle (3.5,2.5);
	
\fill[blue, fill opacity=0.4]
(4,3)--(2,3)--(0,1)--(0,-1)--(2,-3)--(4,-3);
\draw[blue, thick] (4,3)--(2,3)--(0,1)--(0,-1)--(2,-3)--(4,-3);
\node[blue, font=\small] at (2.4,1.5) {$\text{\rm \textbf{A}}(F_1,\Omega_1,x_1^*)$};

\draw[red, thick] (-3,-2) rectangle (1,2);
\node[red, font=\small] at (-1.5,1.5) {$\text{\rm \textbf{E}}(F_1,\Omega_1,\bar{x})$};
	
\draw[violet, very thick] (1,2)--(1,-2);
\fill(1,2) circle (1pt);
\fill(1,-2) circle (1pt);
\node[violet, font=\small] at (2.2,-1.5) {$\text{\rm\textbf{D}}(F_1,\Omega_1,x_1^*)$};
\end{scope}

\begin{scope}[shift={(6,0)}]
\draw[step=1cm,gray!50,dashed] (-1.5,-2.5) grid (5.5,2.5);

\foreach \x in {1,2,3,4}{
\draw (\x,0.05)--(\x,-0.05) node[anchor=north,xshift=4pt] at (\x,0) {\small $\x$};}

\foreach \x in {-1}
{\draw (\x,0.05)--(\x,-0.05) node[anchor=north east,xshift=4pt] at (\x,0) {\small $\x$};}

\foreach \x in {0}
{\draw (\x,0.05)--(\x,-0.05) node[anchor=north east,xshift=3pt] at (\x,0) {\small $\x$};}

\foreach \y in {-2,-1,1,2}
\draw (0.05,\y)--(-0.05,\y) node[left] {\small $\y$};
\draw[-{Stealth}](-1.5,0)--(5.5,0) node[below right] {$x$};
\draw[-{Stealth}](0,-2.5)--(0,2.5) node[above left] {$y$};

\clip (-1.5,-2.5) rectangle (5.5,2.5);

\fill[blue, fill opacity=0.4]
(-3,3)--(2,0.5)--(2,-0.5)--(-3,-3);
\draw[blue, thick] (-3,3)--(2,0.5)--(2,-0.5)--(-3,-3);
\node[blue, font=\small] at (-0.4,0.5) {$\text{\rm \textbf{A}}(F_2,\Omega_2,x_2^*)$};

\draw[red, thick] (1,-1) rectangle (5,1);
\node[red, font=\small] at (3,1.5) {$\text{\rm \textbf{E}}(F_2,\Omega_2,\bar{x})$};

\draw[violet, very thick] (1,1)--(1,-1);
\fill(1,1) circle (1pt);
\fill(1,-1) circle (1pt);
\node[violet, font=\small] at (2.3,-1.5) {$\text{\rm\textbf{D}}(F_2,\Omega_2,x_2^*)=\text{\rm Sol}(P)$};
\end{scope}

\end{tikzpicture}
\caption{Example~\ref{E7.2}\eqref{E7.2-1}}
\label{fig7}
\end{figure}
\item\label{E7.2-2}
Let $p'\in[1,\infty)$.
Then
\begin{align*}
\text{\rm \textbf{E}}(F_1,\Omega_1,\bar x)
&=\left\{(u_1,u_2)\mid |u_1+1|^{p'}+|u_2|^{p'}=2^{p'}\right\},\\
\text{\rm \textbf{E}}(F_2,\Omega_2,\bar x)
&=\left\{(u_1,u_2)\mid |u_1-3|^{p'}+|u_2|^{p'}=2^{p'},\; |u_2|\le 1\right\}\\
&\hspace{0.5cm}\cup\left\{(u_1,u_2)\mid |u_1-3|\le(2^{p'}-1)^{1/p'},\; |u_2|=1\right\}.
\end{align*}	
The sets $\text{\rm \textbf{A}}(F_1,\Omega_1,x_1^*)$ and $\text{\rm \textbf{A}}(F_2,\Omega_2,x_2^*)$
are given by \eqref{E4.4-11}.
Then 
\begin{align*}
\text{\rm \textbf{D}}(F_1,\Omega_1,x_1^*)
&=\text{\rm \textbf{A}}(F_1,\Omega_1,x_1^*)\cap \text{\rm\textbf{E}}(F_1,\Omega_1,\bar x)=\{\bx\},\\
\text{\rm \textbf{D}}(F_2,\Omega_2,x_2^*)&=\text{\rm \textbf{A}}(F_2,\Omega_2,x_2^*)\cap \text{\rm\textbf{E}}(F_2,\Omega_2,\bar x)
=\{\bx\}.
\end{align*}	
By \eqref{T3.16-5}, 
$\text{\rm Sol}(P)=\text{\rm \textbf{D}}(F_1,\Omega_1,x_1^*)\cap \text{\rm \textbf{D}}(F_2,\Omega_2,x_2^*)=\{\bx\}$.
\begin{figure}[H]
\centering
\begin{tikzpicture}[scale=0.8]
\begin{scope}
\draw[gray!50,dashed] (-3.5,-2.5) grid (3.5,2.5);
			
\foreach \x in {-2,-1,0,1}{
\draw (\x,0.05)--(\x,-0.05) node[anchor=north,xshift=-4pt] at (\x,0) {\small $\x$};}

\foreach \y in {-1,1}
\draw (0.05,\y)--(-0.05,\y) node[left] {\small $\y$};
			
\draw[-{Stealth}] (-3.5,0)--(3.5,0) node[below right] {$x$};
\draw[-{Stealth}] (0,-2.5)--(0,2.5) node[above left] {$y$};

\clip (-3.5,-2.5) rectangle (3.5,2.5);
			
\draw[red, thick]
plot[domain=-3:1,samples=200]
(\x,{(8-abs(\x+1)^3)^(1/3)})--
plot[domain=1:-3,samples=200]
(\x,{-(8-abs(\x+1)^3)^(1/3)})--cycle;
\node[red, font=\small] at (-1.2,1.4) {$\text{\rm \textbf{E}}(F_1,\Omega_1,\bar x)$};	

\draw[-{Stealth}, blue, very thick] (0,0)--(3.5,0);
\node[blue, font=\small] at (2.4,-0.4)
{$\text{\rm\textbf{A}}(F_1,\Omega_1,x_1^*)$};
\fill[blue](0,0) circle (1pt);

\fill(1,0) circle (2pt);
\node at (1.2,0.3) {$\bar{x}$};
\end{scope}
		
\begin{scope}[shift={(5.5,0)}]
\draw[gray!50,dashed] (-0.99,-2.5) grid (5.5,2.5);
	
\foreach \x in {0,1,2,3,4}{
\draw (\x,0.05)--(\x,-0.05) node[anchor=north,xshift=-4pt] at (\x,0) {\small $\x$};}

\foreach \y in {-2,-1,1,2}
\draw (0.05,\y)--(-0.05,\y) node[left] {\small $\y$};
\draw[-{Stealth}] (-1,0)--(5.5,0) node[below right] {$x$};
\draw[-{Stealth}] (0,-2.5)--(0,2.5) node[above left] {$y$};
	
\draw[blue, very thick] (-1,0)--(2,0);
\node[blue, font=\small] at (-0.2,0.3) {$\text{\rm \textbf{A}}(F_2,\Omega_2,x_2^*)$};
\fill[blue](2,0) circle (1pt);

\clip (-1,-2.5) rectangle (5.5,2.5);
	
\draw[red, thick]
({3-7^(1/3)},1)--({3+7^(1/3)},1)--
plot[domain=1:-1,samples=200]
({3+(8-abs(\x)^3)^(1/3)},\x)--
({3-7^(1/3)},-1)--({3+7^(1/3)},-1)--
plot[domain=-1:1,samples=200]
({3-(8-abs(\x)^3)^(1/3)},\x)--cycle;
\node[red, font=\small] at (3,1.3) {$\text{\rm \textbf{E}}(F_2,\Omega_2,\bar x)$};

\fill(1,0) circle (2pt);
\node at (1.2,0.3) {$\bar{x}$};
\end{scope}
		
\end{tikzpicture}
\caption{Example~\ref{E7.2}\eqref{E7.2-2} with $p'=3$}
\label{fig8}
\end{figure}
\end{enumerate}
\end{example}	

\begin{example}
Let $X:=L^2[0,1]$ be the Hilbert space in Example~\ref{E4.9}.
Let $\mathbf{u},\mathbf{u}^*$, $x_1,x_2,\bx$ and $F_i,\Omega_i,\phi_i$ $(i=1,2)$ be given by \eqref{E4.9-1}--\eqref{E4.9-4}.
The objective function is given by \eqref{E7.2-3} for all $x\in X$.
By Example~\ref{E3.7}\eqref{E4.4.5}, $\bar x\in\text{\rm Sol}(P)$. 
The functions $\mathcal{T}^{F_1}_{\Omega_1}(x)$ and $\mathcal{T}^{F_2}_{\Omega_2}(x)$ are computed by \eqref{E4.13-1} and \eqref{E4.13-2} for all $x\in X$, respectively.
In particular,
$\mathcal{T}^{F_1}_{\Omega_1}(\bx)=\mathcal{T}^{F_2}_{\Omega_2}(\bx)=\frac{1}{2\sqrt 3}$.
Let $x^*_1:=\mathbf{u}$ and $x^*_2:=-\mathbf{u}$.
Then $x_i^*\in\partial\mathcal{T}^{F_i}_{\Omega_i}(\bx)$ $(i=1,2)$, and condition \eqref{T3.16-4} is satisfied.
The sets $\mathbf A(F_1,\Omega_1,x_1^*)$ and $\mathbf A(F_2,\Omega_2,x_2^*)$ are given by \eqref{E4.9-5}.
By \eqref{T6.1-1},
\begin{align*}
\text{\rm \textbf{E}}(F_i,\Omega_i,\bar x)
=\left\{x\in X\mid\mathcal{T}^{F_i}_{\Omega_i}(x)=\frac{1}{2\sqrt{3}}\right\}
\;\;(i=1,2).
\end{align*}	
Let $\lambda\in\R$.
Then
\begin{gather*}
\mathcal{T}^{F_1}_{\Omega_1}(\lambda\mathbf{u})=\left[\left|\lambda+\frac{1}{\sqrt 3}\right|-\frac{1}{2\sqrt 3}\right]^+,\;\;
\mathcal{T}^{F_2}_{\Omega_2}(\lambda\mathbf{u})=\left[\left|\lambda-\frac{1}{\sqrt 3}\right|-\frac{1}{2\sqrt 3}\right]^+,
\end{gather*}	
and consequently,
\begin{align*}
\mathcal{T}^{F_1}_{\Omega_1}(\lambda\mathbf{u})&=\frac{1}{2\sqrt{3}}\Leftrightarrow\left|\lambda+\frac{1}{\sqrt 3}\right|=\frac{1}{\sqrt{3}}
\Leftrightarrow\lambda\in\left\{-\frac{2}{\sqrt{3}},0\right\},\\
\mathcal{T}^{F_2}_{\Omega_2}(\lambda\mathbf{u})&=\frac{1}{2\sqrt{3}}\Leftrightarrow\left|\lambda-\frac{1}{\sqrt 3}\right|=\frac{1}{\sqrt{3}}
\Leftrightarrow\lambda\in\left\{\frac{2}{\sqrt{3}},0\right\}.
\end{align*}	
By \eqref{E4.9-5}, only $\lambda=0$ contributes to each intersection.
Thus,
\begin{align*}
\text{\rm \textbf{D}}(F_1,\Omega_1,x_1^*)
&=\text{\rm \textbf{A}}(F_1,\Omega_1,x_1^*)\cap \text{\rm\textbf{E}}(F_1,\Omega_1,\bar x)=\left\{\bx\right\},\\
\text{\rm \textbf{D}}(F_2,\Omega_2,x_2^*)&=\text{\rm \textbf{A}}(F_2,\Omega_2,x_2^*)\cap \text{\rm\textbf{E}}(F_2,\Omega_2,\bar x)
=\left\{\bx\right\}.
\end{align*}	
By \eqref{T3.16-5}, 
$\text{\rm Sol}(P)=\text{\rm \textbf{D}}(F_1,\Omega_1,x_1^*)\cap \text{\rm \textbf{D}}(F_2,\Omega_2,x_2^*)=\{\bx\}$.
\end{example}	

\section{Conclusions}\label{S8}

Dual necessary and sufficient optimality conditions are established for a general convex composition minimisation problem.
The entire solution set is characterised via a given solution and its dual data arising from the optimality conditions.
Sufficient conditions for the existence and boundedness of the solution set are discussed.

The results are applied to norm optimisation problems involving minimal time functions with closed convex target sets and bounded closed convex dynamics sets containing the origin in their interiors.
For the Fermat--Torricelli problem, the multipliers can all be chosen equal to one, and the solution set is represented as an intersection of sets on which fixed dual vectors remain subgradients.
Under additional projection assumptions, each set is decomposed into the sum of two sets described by target normal cones and dynamics Minkowski functionals.
For the Chebyshev centre problem, only indices with positive multipliers enter the solution set representation.
At every solution, the corresponding fixed dual vectors remain subgradients, and the associated minimal times attain the maximum.
For the $p$-Fermat--Torricelli problem, the dual data consist of
multipliers determined by the component minimal times and subgradients
of the component functions.
At every solution, each minimal time equals its value at the reference
solution, and the fixed dual vectors remain subgradients for components
with positive minimal times.

Possible directions for future research include extensions to directional and unbounded dynamics sets, stability of solution sets under perturbations of target and dynamics sets, computational methods based on the dual characterisations, and necessary optimality
conditions for nonconvex target and dynamics sets.

\section*{Acknowledgments}
The authors wish to thank Professor Alexander Kruger for  his comments  which helped us improve the manuscript.



\section*{Author contributions}
D.H.H. and N.D.C. contributed equally to this work.
Both authors read and approved the final manuscript.

\section*{Competing interests}
The authors declare that they have no competing interests.

\section*{Data availability}
Data sharing is not applicable to this article as no new data were created or analysed in this study.






\end{document}